\newcommand{\F}{\mathbb F}
\newcommand{\Z}{\mathbb Z}
\newcommand{\FF}{{\mathcal {F}}}
\newcommand{\FFd}{{\mathcal {F}_d}}
\newcommand{\PP}{\mathbb P}
\newcommand{\thet}{\theta_j(f, \psi)}
\DeclareMathOperator{\tr}{tr}
\DeclareMathOperator{\SL}{SL}
\DeclareMathOperator{\full}{full}
\DeclareMathOperator{\ord}{ord}
\newtheorem{lem}{Lemma}[section]
\newtheorem{prop}[lem]{Proposition}
\newtheorem{thm}[lem]{Theorem}
\newtheorem{defn}[lem]{Definition}
\newtheorem{cor}[lem]{Corollary}
\theoremstyle{definition}
\newtheorem{rem}[lem]{Remark}
\date\today
\title{Statistics for ordinary Artin-Schreier covers and other $p$-rank strata}
\author{Alina Bucur, Chantal David, Brooke Feigon, Matilde Lal\'{i}n}
\address{Alina Bucur: Department of Mathematics, University of California at San Diego,
9500 Gilman Drive $\#$0112, La Jolla, CA 92093, USA} \email{alina@math.ucsd.edu}
\address{Chantal David: Department of Mathematics and Statistics,
Concordia University,
1455 de Maisonneuve West,
Montreal, QC H3G 1M8, Canada} \email{cdavid@mathstat.concordia.ca}
 \address{Brooke Feigon: Department of Mathematics,
The City College of New York,
CUNY,
NAC 8/133,
New York, NY 10031, USA} \email{bfeigon@ccny.cuny.edu }
 \address{Matilde Lal\'in: D\'epartement de math\'ematiques et de statistique,
                                    Universit\'e de Montr\'eal.
                                    CP 6128, succ. Centre-ville.
                                     Montreal, QC H3C 3J7, Canada} \email{mlalin@dms.umontreal.ca}
\begin{document}

\begin{abstract} We study the distribution of the number of points and of the zeroes of the zeta function in different $p$-rank strata of Artin-Schreier covers over $\F_q$ when $q$ is fixed and
the genus goes to infinity. The $p$-rank strata considered include the ordinary family, the whole family, and the family of curves with $p$-rank equal to $p-1.$ While the  zeta zeroes always approach the standard Gaussian distribution, the number of points
over $\F_q$ has a distribution that varies with the specific family.
\end{abstract}

\keywords{Artin-Schreier curves, finite fields, distribution of number of points, distribution of zeroes of $L$-functions of curves.}

\subjclass[2010]{Primary 11G20; Secondary 11M50, 14G15}

\maketitle
\tableofcontents

\section{Introduction}

Besides their central place in number theory, algebraic curves over finite fields also play a pivotal role in applications via such fields as cryptography and error-correcting codes. In both theory and applications, a key property of an algebraic curve over a finite field is its {\em zeta function}, which determines and is determined by the number of points on the curve over the finite extensions of the base field. These zeta functions exhibit a strong analogy with other zeta functions occurring in number theory, such as the Riemann zeta function, with the added benefit that the analogue of the Riemann hypothesis is known by results of Weil.

In addition to studying curves individually, it is also profitable to study curves in families and ask aggregate questions over families. Historically, this generally involved varying the finite field, as in the work of Deligne. More recently, a series of results have emerged in which the finite field is fixed and other geometric parameters are
allowed to vary. Examples include the work Kurlberg and Rudnick \cite{kr} that studies the distribution of the number of points on hyperelliptic curves as the genus grows. Similar statistics for the number of points have been computed for cyclic $\ell$-covers of the projective line \cite{bdfl1, bdfl2, x2}, plane curves \cite{bdfl3}, complete intersections in projective spaces \cite{bk}, general trigonal curves \cite{wood}, superelliptic curves \cite{cwz},  curves on Hirzebruch surfaces \cite{ew}, and a subfamily of Artin-Schreier covers \cite{entin}.

A finer statistic for these curves is the distribution of the zeroes of the zeta function. (Note that the distribution of the points can be easily deduced from the distribution of the zeroes.) The problem of the distribution of the zeroes in the global and mesoscopic regimes was considered  by Faifman and Rudnick \cite{fr} for hyperelliptic curves while \cite{x1}, \cite{x3}, and \cite{BDFLS} treated the cases of cyclic $\ell$-covers, abelian covers of algebraic curves, and Artin-Schreier covers respectively. On the other hand, Entin \cite{entin} used the distributions of the number of points of a subfamily of Artin-Schreier covers to obtain some partial results towards the pair correlation problem for the zeroes.

Artin-Schreier curves represent a special family because they cannot be uniformly obtained by base-changing a scheme defined over $\Z.$  This is intimately related to the fact that their zeta function has an expression in terms of {\it additive} characters of $\F_p$, and not in terms of multiplicative characters as  is the case for the family of hyperelliptic curves and cyclic $\ell$-covers. On the other hand, the factor corresponding to a fixed additive character has a nice description as an exponential sum \eqref{Euler-product}, which allows one to do a fair number of concrete computations. For instance, they can sometimes be  used to show that the Weil bound on the number of points is sharp (especially in the supersingular case \cite{garcia, gv}).

The $p$-rank induces a stratification on the moduli space of Artin-Schreier covers of genus $\mathfrak g.$ We would like to remark that this stratification is not specific to  Artin-Schreier covers. Perhaps the best known example is the case of elliptic curves. The moduli space of elliptic curves only has two $p$-strata -- $p$-rank $1$ (ordinary) and  $p$-rank $0$ (supersingular) --  and these two classes of elliptic curves behave fundamentally differently in many aspects. The ordinary stratum is Zariski dense in the moduli space, but there are only finitely many  supersingular $\bar \F_q$-points in the moduli space of elliptic curves.

In the case of the Artin-Schreier covers, the picture is more complicated, as there are many intermediate strata besides the minimal $p$-rank and the maximal $p$-rank stratum. But it is still the case that the $p$-rank $0$ stratum, when non-empty, is the smallest stratum in the moduli space $\mathcal{AS}_{\mathfrak g}$ of Artin-Schreier covers of genus $\mathfrak g.$ However, the $p$-rank $0$ stratum appears if and only if  $2\mathfrak{g}/(p-1) \not \equiv -1 \pmod p.$ Moreover, the supersingular locus is usually strictly contained in this stratum and it is not easy to locate the supersingular covers among those with $p$-rank $0.$ (See \cite{zhu}.) On the other hand, the maximal $p$-rank stratum is irreducible in $\mathcal{AS}_{\mathfrak g},$ and in some sense, it is still the biggest stratum. As it is noted in \cite[Example 2.9]{pz}, in the case of $p\geq 3$ that we are interested in, the ordinary locus is nonempty whenever $2\mathfrak{g}/(p-1)$ is even. Otherwise, we can still talk about the stratum of maximal $p$-rank; but that maximal rank will be strictly smaller than the genus (namely equal to $\mathfrak g - \frac{p-1}{2}$), and there is no ordinary locus.

Fix a finite field $\F_q$ of odd characteristic $p.$ An Artin-Schreier cover is an Artin-Schreier curve for which we fix an automorphism of order $p$ and an isomorphism between the quotient and $\mathbb P^1.$ Concretely, an $\F_q$-point of the moduli space of Artin-Schreier covers of genus $\mathfrak g$ consists, up to $\F_q$-isomorphism, of a curve of genus $\mathfrak g$ with affine model
\[C_f: y^p - y = f(x),\] where $f(x)\in \F_q(x)$ is a rational function, together with the automorphism $y \mapsto y+1.$

Let $p_1, \dots, p_{r+1}$ be the set of poles of $f(x)$ and let $d_j$ be the order of the pole $p_j$. By Artin--Schreier theory, we can assume that $p \nmid d_j$. Then the genus of $C_f$ is given by
\begin{equation}\label{genusformula}
\mathfrak{g}(C_f)=\frac{p-1}{2}\left(-2 + \sum_{j=1}^{r+1} (d_j+1)\right)=\frac{p-1}{2}\left(r-1+\sum_{j=1}^{r+1} d_j\right).
\end{equation}
(See \cite[Lemma 2.6]{pz}.) The $p$-rank is the integer $s$ such that the cardinality of $\mathrm{Jac}(C_f)[p](\overline{\F_q})$ is $p^s$; by the Deuring-Shafarevich formula, we have $s=r(p-1)$
where  $r+1$ is the number of poles of $f(x)$. We will write $\mathcal{AS}_{\mathfrak g, s}$ for the stratum with $p$-rank equal to $s$ of the moduli space $\mathcal{AS}_{\mathfrak g}.$ For example, $s=0$ corresponds to one pole, which can always be moved to infinity. This is the case where $f(x)$ is a polynomial
that was considered in \cite{entin, BDFLS}. However, this case only corresponds to a piece, namely $\mathcal{AS}_{\mathfrak{g},0}$, of the whole moduli space $\mathcal{AS}_\mathfrak{g}$ of Artin-Schreier covers of genus $\mathfrak{g}$. The next case is $s=p-1$, which includes the case when $f(x)$ is a Laurent polynomial, but this is not the only way one get this $p$-rank, as we explain in Section \ref{sec:prescribed}.   For details on the moduli space of Artin-Schreier curves and the
$p$-rank stratification, we refer the reader to \cite{pz}.

The main object of this paper is the study of the distribution of the number of points and zeta zeroes for the  ordinary locus $\mathcal{AS}_{\mathfrak{g},\mathfrak{g}}$  which only appears when $2\mathfrak{g}/(p-1)$ is even. In addition, we treat the cases of $\mathcal{AS}_{\mathfrak{g}, p-1}$ of covers with $p$-rank equal to $p-1$  and  the whole family $\mathcal{AS}_{\mathfrak{g}}$.
More precisely, we have the following results.
\begin{thm} \label{thmnumberofpoints}
\begin{enumerate}\item Assume that $2\mathfrak{g}/(p-1)$ is even. The average number of $\F_{q^k}$-points on an ordinary Artin-Schreier cover in $\mathcal{AS}_{\mathfrak{g},\mathfrak{g}}(\F_q)$ is
\[\begin{cases} q^{k} +1 + O\left(q^{(-1/2+\varepsilon)\left(1+ \mathfrak g/(p-1)\right)+2k}\right)& p \nmid k, \\\\
 q^{k} +1 +\frac{p-1}{1+q^{-1}-q^{-2}} +\displaystyle\sum_{u\mid \frac{k}{p}} \frac{p-1}{1+q^{-u}-q^{-2u}} \sum_{e \mid u} \mu(e) q^{u/e}+  O\left(q^{(-1/2+\varepsilon)\left(1+ \mathfrak g/(p-1)\right)+2k}\right)& p \mid k.\end{cases}\]
\item The average number of $\F_{q^k}$-points on an Artin-Schreier cover in $\mathcal{AS}_{\mathfrak{g}}(\F_q)$ whose ramification divisor is supported at $r+1$ points and has degree $d$
is
\[\begin{cases} q^{k} +1 +O\left(q^{(\varepsilon-1)d+2k}\right)& p \nmid k, \\ \\
q^k+1+(p-1)q^{k/p}+\frac{p-1}{1+q^{-1}}-(p-1)\displaystyle \sum_{u|\frac{k}{p}}\frac{1}{1+q^u}\sum_{e\mid u}\mu(e)q^{u/e}+O\left(q^{(\varepsilon-1)d+2k}\right)& p \mid k. \end{cases}\]
\item The average number of $\F_{q^k}$-points on an Artin-Schreier cover in $\mathcal{AS}_{\mathfrak{g},p-1} (\F_q)$  is
\[\begin{cases} q^{k} +1 & p \nmid k, \\\\ q^{k} +1 +(p-1)(q^{k/p}-1) & p \mid k,\, k \mbox{ even},\\\\
q^{k} +1 +(p-1)q^{k/p} & p \mid k,\, k \mbox{ odd}. \end{cases}\]
\end{enumerate}

\end{thm}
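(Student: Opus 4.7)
Proof sketch. The general strategy, following \cite{kr, entin}, expresses $\#C_f(\F_{q^k})$ as an additive character sum and then averages over the stratum. By Artin-Schreier theory, $y^p - y = \alpha$ has $p$ solutions in $\F_{q^k}$ when $\text{Tr}_{\F_{q^k}/\F_p}(\alpha) = 0$ and none otherwise, so applying the orthogonality identity $[\text{Tr}(\alpha) = 0] = \frac{1}{p}\sum_\psi \psi(\text{Tr}(\alpha))$ (with $\psi$ running over additive characters of $\F_p$) yields
\[\#C_f(\F_{q^k}) = q^k + 1 + \sum_{\psi \neq 1}\sum_{\substack{x \in \mathbb{P}^1(\F_{q^k}) \\ x\text{ not a pole of }f}}\psi\!\left(\text{Tr}_{\F_{q^k}/\F_p}(f(x))\right).\]
The plan is to average both sides over the relevant stratum, interchange the order of summation, and for each fixed pair $(x, \psi)$ compute the expectation $\mathbb{E}_f[\psi(\text{Tr}(f(x)))]$.

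Each stratum admits a concrete parametrization. By \eqref{genusformula}, the ordinary locus forces every pole to be simple, so $f$ is determined by an unordered Galois-stable choice of $r+1 = 1 + \mathfrak{g}/(p-1)$ distinct points of $\mathbb{P}^1(\overline{\F_q})$ together with a nonzero residue at each; the family $\mathcal{AS}_{\mathfrak{g}}$ with fixed pole data $(r, d)$ allows higher-order principal parts; and the $p$-rank $p-1$ stratum ($r = 1$) reduces, via $\F_q$-automorphism of $\mathbb{P}^1$, to Laurent polynomials with poles only at $\{0, \infty\}$. Once the pole locations are fixed, the coefficients of $f$ form an $\F_q$-vector space $V$ (modulo Artin-Schreier equivalence $p \nmid d_j$), and $f \mapsto \text{Tr}_{\F_{q^k}/\F_p}(f(x))$ is an $\F_p$-linear functional on $V$. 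By orthogonality, $\mathbb{E}_f[\psi(\text{Tr}(f(x)))]$ vanishes unless this functional is identically zero on $V$, which in the Laurent case translates to $\text{Tr}_{\F_{q^k}/\F_q}(x^i) = 0$ for every allowable monomial index $i$. When $p \nmid k$ this forces $x$ to be a pole; when $p \mid k$ every $x \in \F_{q^{k/p}}$ qualifies, since $\text{Tr}_{\F_{q^k}/\F_{q^{k/p}}}(x^i) = p \cdot x^i = 0$ forces the full trace to vanish via the tower formula. Möbius inversion over subfields of $\F_{q^{k/p}}$ then extracts the divisor sum $\sum_{e \mid u}\mu(e)q^{u/e}$ appearing in the theorem.

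The three parts differ in the complexity of the outer averaging. Part (3) is the cleanest: the Laurent polynomial family is a full $\F_q$-vector space, so the orthogonality is exact and the formula has no error term; the even/odd case analysis reflects whether the pole at $\infty$ is detected at level $k/p$. Parts (1) and (2) additionally require averaging over pole configurations, and the oscillating character sum over non-degenerate $x$ is controlled by Weil-type square-root cancellation in the $x$-sum for each fixed $f$. The main obstacle is the bookkeeping for the ordinary stratum: one must sum over Galois-stable configurations of simple poles (which can include orbits in $\F_{q^k} \setminus \F_q$), weight by compatible residue choices, carefully separate the ``$x$ is a pole'' and ``$x \in \F_{q^{k/p}}$'' contributions, and reassemble the geometric-series factor $(p-1)/(1 + q^{-u} - q^{-2u})$ from the local contributions over each orbit length $u \mid k/p$. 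The $(-1/2 + \varepsilon)$ exponent in the error term for part (1) reflects the Weil bound applied to the residual sum over non-degenerate $x$, while the stronger $(\varepsilon-1)d$ bound in part (2) benefits from the full degree $d$ of free parameters available to average over.
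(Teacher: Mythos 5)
Your high-level strategy --- expand $\#C_f(\F_{q^k})$ via additive characters, average over the family, and compute $\mathbb{E}_f[\psi(\mathrm{Tr}(f(\alpha)))]$ for each fixed $\alpha$ --- matches the paper's structure (the paper equivalently uses Lemma~\ref{expectednumberfiber} and the quantities $|\mathcal F_d(\alpha,\beta)|/|\mathcal F_d|$). But three of your key steps are wrong or would not go through.

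First, your treatment of part (3) has a genuine mathematical error. You claim the $p$-rank $p-1$ stratum ``reduces, via $\F_q$-automorphism of $\mathbb{P}^1$, to Laurent polynomials with poles only at $\{0,\infty\}$.'' This is only possible when both poles are $\F_q$-rational. The stratum also contains covers whose two poles are Galois-conjugate points of $\F_{q^2}\setminus\F_q$, i.e.\ factorization type $v=(2^d)$, and these cannot be moved to $\{0,\infty\}$ by an $\F_q$-automorphism. It is precisely the presence of this second piece, weighted equally with the Laurent piece, that produces the even/odd dichotomy in the answer (Theorem~\ref{thm:rank1}); your explanation of the dichotomy --- ``whether the pole at $\infty$ is detected at level $k/p$'' --- is incorrect, and a proof along your lines would output $q^k+1+(p-1)(q^{k/p}-1)$ for all $p\mid k$, missing the odd-$k$ case.

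Second, you attribute the error terms in parts (1) and (2) to ``Weil-type square-root cancellation in the $x$-sum for each fixed $f$.'' The paper never bounds the $x$-sum for fixed $f$, and a per-$f$ Weil bound would give something like $O(\mathfrak g\, q^{k/2})$ which does not improve as $d\to\infty$. The actual source of the error terms is sieve-theoretic: computing $|\mathcal F_d(\alpha,\beta)|/|\mathcal F_d|$ requires inclusion-exclusion over divisors of $g$ (to impose squarefreeness of $h$ and coprimality of $g,h$), and the resulting M\"obius sums must be truncated at degree $d$; the exponent $(-1/2+\varepsilon)d$ in part (1) versus $(\varepsilon-1)d$ in part (2) reflects whether one or two layers of sieving (squarefree $h$, and $(g,h)=1$) are needed (compare Lemma~\ref{lem:gcoprimesqfreeval} and Proposition~\ref{prop:frvalue} with Proposition~\ref{full:nalpha}). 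Relatedly, your vector-space orthogonality argument as stated does not apply directly: the parameter sets are not $\F_q$-vector spaces (the residues/leading terms must be nonzero, $h$ squarefree, $(g,h)=1$), and the paper resolves this through the careful counting of coprime pairs with Dirichlet series and the Tauberian theorem of Rosen, not through ``exact orthogonality.'' You would need to spell out this counting to make your argument go through.
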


By Weil's conjectures, the zeta function of $C_f$,
\[Z_{C_f} (u) = \exp \left( \sum_{k=1}^{\infty} N_k(C_f) \frac{u^k}{k} \right), \]
where $N_k(C_f)$ is the number of points on $C_f$ defined over $\F_{q^k}$,
can be written as
\[Z_{C_f} (u) =
\frac{P_{C_f}(u)}{(1-u)(1-qu)},\] where $P_{C_f}(u)$ is a polynomial of degree $2\mathfrak{g}=(p-1)(\Delta-1)$ with $\Delta=r+\sum_{j=1}^{r+1} d_j$. Using Lemma \ref{expectednumberfiber} and the additive characters
 of $\F_p$ to write a formula for $N_k(C_f)$, it follows easily that
\begin{equation}
P_{C_f}(u) = \prod_{\psi} L(u, f, \psi),\end{equation}
where the product is taken over the {\it non-trivial} additive characters $\psi$ of $\mathbb F_p$, and $L(u, f, \psi)$ are certain $L$-functions (given later by \eqref{Euler-product}). Understanding the distribution of the zeroes of $Z_{C_f} (u)$ amounts to understanding the
distribution of the zeroes of each of the $L(u, f, \psi)$ as $f$ runs in the relevant family of rational functions and the genus goes to infinity.

If we write
\[L(u,f,\psi)=\prod_{j=1}^{\Delta-1} (1-\alpha_j(f,\psi)u),\]
we have that $\alpha_j(f, \psi) = \sqrt{q} e^{2 \pi i \theta_j(f, \psi)}$ and $\theta_j(f, \psi) \in [-1/2, 1/2)$.
We study the statistics of the set of angles $\{\theta_j(f, \psi)\}$ as $f$ varies in the family. For an interval $\mathcal{I} \subset[-1/2,1/2),$ let
\[N_\mathcal{I}(f, \psi) := \#\{1\leq j \leq \Delta-1:\,\theta_j(f, \psi) \in \mathcal{I}\},\]
and
\[
N_\mathcal{I}(C_f):=\sum_{j=1}^{p-1}N_\mathcal{I}(f,\psi^j).
\]
We show that the number of zeroes with angle in a prescribed non-trivial subinterval $\mathcal I$
is asymptotic to $2\mathfrak{g}|\mathcal I|$, has variance asymptotic to $\frac{2(p-1)}{\pi^2} \log(\mathfrak{g}|\mathcal I|)$, and properly normalized has a Gaussian distribution.

\begin{thm}\label{zeroesthm}  Fix a finite field $\F_q$ of characteristic $p$. Let $\mathcal{AS}$ denote the family of Artin-Schreier covers, ordinary Artin-Schreier covers, or the $p$-rank $p-1$ Artin-Schreier covers.  Then for any real numbers $a<b$ and  $0<|\mathcal{I}|<1$ either  fixed or
$|\mathcal{I}|\rightarrow 0$ while $\mathfrak{g}|\mathcal{I}|\rightarrow \infty$,
\[\lim_{\mathfrak g \rightarrow \infty} \mathrm{Prob}_{\mathcal{AS}(\mathbb{F}_q)}\left(a < \frac{N_\mathcal{I}(C_f)-2\mathfrak{g}|\mathcal{I}|}{\sqrt{\frac{2(p-1)}{\pi^2}\log\left(\mathfrak{g}|\mathcal{I}|\right)}} < b\right)=\frac{1}{\sqrt{2\pi}} \int_a^b e^{-x^2/2} dx.\]
\end{thm}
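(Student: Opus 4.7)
The plan is to apply the method of moments, following the template established by Faifman--Rudnick \cite{fr} for hyperelliptic curves and adapted to (a subfamily of) Artin--Schreier covers in \cite{BDFLS}. The unified statement across the three families $\mathcal{AS}_\mathfrak{g,g}$, $\mathcal{AS}_\mathfrak{g,p-1}$, and $\mathcal{AS}_\mathfrak{g}$ suggests that the Gaussian behaviour is forced by the $p-1$ factors $L(u,f,\psi)$ whose zeros decorrelate across $\psi$ in the family limit; the stratum only enters through lower-order effects.

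\textbf{Step 1 (Explicit formula and Beurling--Selberg truncation).} First I would expand $\mathbf{1}_\mathcal{I}$ in its Fourier series on $[-1/2,1/2)$ and, using $L(u,f,\psi) = \prod_{j=1}^{\Delta-1}(1-\sqrt{q}\, e^{2\pi i \thet} u)$, write
\[
N_\mathcal{I}(C_f) - 2\mathfrak{g}|\mathcal{I}| = -\sum_{n\neq 0}\hat{\mathbf{1}}_\mathcal{I}(n)\sum_{j=1}^{p-1}\sum_{\ell=1}^{\Delta-1}e^{2\pi i n \theta_\ell(f,\psi^j)}.
\]
Then I would replace $\mathbf{1}_\mathcal{I}$ by the Beurling--Selberg majorant/minorant of parameter $N\asymp \mathfrak{g}|\mathcal{I}|$, so that only the frequencies $|n|\leq N$ contribute to leading order while the boundary error is $O(\mathfrak{g}/N)=O(1/|\mathcal{I}|)$, negligible compared to $\sqrt{\log(\mathfrak{g}|\mathcal{I}|)}$ once one subtracts off the mean.

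\textbf{Step 2 (Reduction to point counts / character sums).} The inner sum over $\ell$ equals $q^{-n/2}\sum_\ell \alpha_\ell(f,\psi^j)^n$, and using the zeta function factorization one obtains
\[
\sum_{j=1}^{p-1}\sum_{\ell=1}^{\Delta-1}\alpha_\ell(f,\psi^j)^n = q^n+1-N_n(C_f).
\]
For the explicit formula one needs the finer expression $\sum_\ell \alpha_\ell(f,\psi^j)^n = -\sum_{e\mid n} e\, S_{n/e}(f,\psi^{je})$-type additive character sums over the points of $\mathbb{P}^1(\F_{q^{n/e}})$ avoiding the poles of $f$. This is where Theorem \ref{thmnumberofpoints} (and the unproved finer expectations of products of such character sums computed in the body of the paper) enters.

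\textbf{Step 3 (Moments).} I would compute the $2k$-th moment of $N_\mathcal{I}(C_f)-2\mathfrak{g}|\mathcal{I}|$ by expanding and averaging $\prod_{i=1}^{2k}\sum_{j_i,\ell_i}e^{2\pi i n_i\theta_{\ell_i}(f,\psi^{j_i})}$ over $f$ in the chosen stratum. After substituting the character-sum expression of Step 2, the average becomes an expectation of a product of additive character sums $\prod_i \psi^{j_i}(\text{Tr}\, f(x_i))$ over the family. The key claim is a \emph{Gaussian matching}: only terms in which the $n_i$ pair up and the characters $\psi^{j_i}$ satisfy $\psi^{j_1}\cdots \psi^{j_k}=1$ (trivial) after pairing contribute to the top power of $\log(\mathfrak{g}|\mathcal{I}|)$, and the resulting combinatorics are Wick's formula, yielding the Gaussian moments $\mathbb{E}[Z^{2k}]=(2k-1)!!$ with variance
\[
\sum_{0<|n|\leq N}|\hat{\mathbf{1}}_\mathcal{I}(n)|^2 (p-1) \sim \frac{2(p-1)}{\pi^2}\log(\mathfrak{g}|\mathcal{I}|),
\]
the factor $p-1$ arising from the diagonal pairing over the nontrivial additive characters.

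\textbf{Main obstacle.} The technical heart is Step 3: proving that averages of products of additive character sums $\prod_i \psi^{j_i}(\text{Tr}\,f(x_i))$ exhibit the predicted diagonal/Wick behaviour uniformly in the degree of the argument, within each of the three $p$-rank strata. In the $\mathcal{AS}_{\mathfrak g, \mathfrak g}$ case in particular, fixing the pole structure to be $\mathfrak{g}/(p-1)+1$ simple poles imposes nontrivial constraints, and one must control the lower-order $(p\mid k)$ bias present in Theorem \ref{thmnumberofpoints} so as to recover a centered Gaussian after subtracting the mean. The bounds on character sums of Step 2 need to be of power-saving type with exponent independent of $n$ up to the truncation level $N=\mathfrak{g}|\mathcal{I}|$, so that the off-diagonal moments are of strictly lower order in $\log(\mathfrak{g}|\mathcal{I}|)$.
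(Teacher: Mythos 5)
Your proposal follows essentially the same route as the paper (Sections 6--9): Beurling--Selberg truncation, the explicit formula relating zero-counting to the additive character sums $S_k(f,\psi)$, and a moment computation in which the leading contribution comes from Wick-type diagonal pairings of frequencies and characters. The identification of the $p-1$ factor as coming from the diagonal over non-trivial characters, and of the chief obstacle (proving the diagonal behaviour of averages of products of character sums within each stratum) are both on target. The paper organizes the final step slightly differently: rather than directly computing moments of $N_\mathcal{I}(C_f)-2\mathfrak g|\mathcal I|$, it proves Gaussian moments for the truncated object $S^\pm(K,C_f)$ (Theorem \ref{thm:sumisgaussian}) and then shows that $N_\mathcal{I}(C_f)-2\mathfrak g|\mathcal I|$ converges in mean square to $-S^\pm(K,C_f)$; this avoids the awkwardness of taking even moments of a quantity only sandwiched between two random variables.

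There is, however, one genuine gap: your choice of truncation parameter $N\asymp\mathfrak g|\mathcal I|$ is too small to cover the mesoscopic regime $|\mathcal I|\to 0$, $\mathfrak g|\mathcal I|\to\infty$. Your boundary error $O(\mathfrak g/N)=O(1/|\mathcal I|)$ is only $o(\sqrt{\log(\mathfrak g|\mathcal I|)})$ when $|\mathcal I|$ is bounded below; if, say, $|\mathcal I|\asymp\mathfrak g^{-1/2}$, then $1/|\mathcal I|\asymp\mathfrak g^{1/2}$ swamps the standard deviation. Worse, the variance produced by the truncated Beurling--Selberg kernel is $\frac{2(p-1)}{\pi^2}\log(N|\mathcal I|)+O(1)$; with $N\asymp\mathfrak g|\mathcal I|$ this is $\frac{2(p-1)}{\pi^2}\log(\mathfrak g|\mathcal I|^2)+O(1)$, which can be $O(1)$ or even negative (e.g.\ $|\mathcal I|=\mathfrak g^{-1/2}$ gives $\log(\mathfrak g|\mathcal I|^2)=0$), so you do not recover the target normalization. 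You need $K$ essentially of size $\mathfrak g$ (the paper takes $K=\mathfrak g/\log\log(\mathfrak g|\mathcal I|)$) so that $\mathfrak g/K$ is truly negligible and $\log(K|\mathcal I|)\sim\log(\mathfrak g|\mathcal I|)$. Two smaller inaccuracies: the relation between power sums of Frobenius eigenvalues and $S_n$ is simply $\sum_\ell\alpha_\ell(f,\psi)^n=-S_n(f,\psi)$, with no sum over divisors $e\mid n$ (which would be conflating the Euler product in closed points with the definition of $S_n$ as a sum over all $\mathbb P^1(\F_{q^n})$-points); and the variance sum should carry a factor $|n|$, i.e.\ $\sum_{0<|n|\le K}|\hat{\mathbf 1}_\mathcal I(n)|^2|n|\,(p-1)$, reflecting $\langle S_n\overline{S_n}\rangle\sim n\,q^n$ coming from the $\pi(n)n^2$ diagonal count, though the asymptotic you write down for the variance is the correct one.
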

This result is analogous to what was obtained in \cite{BDFLS} for $p$-rank $0$ Artin-Schreier covers and is compatible with the philosophy of Katz and Sarnak \cite{KS}. In fact, Katz \cite{Katz1} shows that the monodromy of the $L$-functions defined in \eqref{Euler-product} is given by $\SL\left(2\mathfrak g/(p-1)\right)$ when the dimension of the moduli space is big enough. Since the dimension grows with the genus, this occurs when $\mathfrak g$ is big enough. In particular,  \cite{ds} implies that the limiting distribution as $\mathfrak g \to \infty$ is Gaussian.  

\begin{rem}
 A similar result can be proved for $N_\mathcal{I}(f, \psi)$ with asymptotic mean and variance $(\Delta-1)|\mathcal{I}|$ and $\frac{1}{\pi^2}\log \mathfrak{g}|\mathcal{I}|$ respectively with the additional restriction that the interval $\mathcal{I}$ is symmetric. In fact, under this condition, the $N_\mathcal{I}(f, \psi^j)$ for $j=1,\dots, (p-1)/2$ approach independently jointly normal distributions.
\end{rem}

\section{Basic Artin-Schreier theory}

Fix an odd prime $p$ and let $\mathbb F_q$ be a finite field of characteristic $p$ with $q$ elements.
We consider, up to $\F_q$-isomorphism, pairs of curves with affine model \[C_f: y^p - y = f(x)\] with $f(x)$ a rational function  together with the automorphism $y \mapsto y+1.$

For each integer $n\geq 1$, denote by $\tr_n: \mathbb F_{q^n} \to \mathbb F_p$ the absolute trace map (not the trace to $\F_q$).

\begin{lem} \label{expectednumberfiber}
For each $\alpha \in \PP^1(\F_{q^n})$, the number of points on the curve $C_f: y^p - y = f(x)$
in the fiber above $\alpha$ which are defined over $\F_{q^n}$ is given by  
$$\begin{cases} 1  &  \textrm{if } {f(\alpha)} =  \infty, \\
&\\
 p  &  \textrm{if } f(\alpha)  \in \mathbb F_{q^n} \textrm{ with }\tr_n f(\alpha) = 0, \\ &\\
 0   &  \textrm{if } f(\alpha) \in \mathbb F_{q^n} \textrm{ with }\tr_n f(\alpha) \neq 0. \end{cases}$$
\end{lem}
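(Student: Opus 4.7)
The plan is to handle the three cases separately, using standard Artin--Schreier theory for the finite cases and a local analysis at poles for the infinite case.

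First I would dispose of the pole case. If $f(\alpha)=\infty$, then $\alpha$ is a pole of $f$, and since Artin--Schreier theory requires $p\nmid d_j$ for each pole $p_j$, the cover $C_f\to\mathbb P^1$ is totally (and wildly) ramified above $\alpha$. In particular there is a unique point of the smooth model of $C_f$ lying over $\alpha$, and it is defined over $\mathbb F_{q^n}$ because $\alpha$ is. This gives the count $1$ in the first case.

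Next I would treat the two finite cases together by studying the fiber of the Artin--Schreier map $\wp\colon y\mapsto y^p-y$ on $\mathbb F_{q^n}$. The points of $C_f$ over $\mathbb F_{q^n}$ lying above $\alpha$ are in bijection with elements $y\in\mathbb F_{q^n}$ satisfying $y^p-y=f(\alpha)$, so I need to count solutions. The map $\wp$ is $\mathbb F_p$-linear, with kernel exactly the prime field $\mathbb F_p\subset\mathbb F_{q^n}$. Composing with the absolute trace $\tr_n\colon\mathbb F_{q^n}\to\mathbb F_p$ gives zero, since $\tr_n(y^p-y)=\tr_n(y^p)-\tr_n(y)=0$ (using that the absolute trace is invariant under $y\mapsto y^p$). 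Hence $\operatorname{im}(\wp)\subseteq\ker(\tr_n)$, and by dimension counting these subspaces coincide: both have $\mathbb F_p$-codimension $1$ in $\mathbb F_{q^n}$.

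From this I can read off the remaining two cases. If $\tr_n(f(\alpha))\ne 0$, then $f(\alpha)\notin\operatorname{im}(\wp)$ and there are no solutions, giving $0$ points in the fiber. If $\tr_n(f(\alpha))=0$, then $f(\alpha)\in\operatorname{im}(\wp)$, so a solution $y_0$ exists, and the full solution set is the coset $y_0+\mathbb F_p$, which has exactly $p$ elements in $\mathbb F_{q^n}$. This yields $p$ points in the fiber and completes the lemma. There is no serious obstacle here; the only subtlety worth stating carefully is the uniqueness of the point above $\alpha$ when $f(\alpha)=\infty$, which relies on the total wild ramification inherent to the normalized Artin--Schreier model.
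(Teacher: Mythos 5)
Your argument is correct and is essentially the paper's approach spelled out in full: the paper simply cites Hilbert's Theorem~90, and your direct $\F_p$-linear dimension count showing $\operatorname{im}(\wp)=\ker(\tr_n)$ is exactly the additive form of that statement for finite fields, while your ramification observation handles the pole case that the paper leaves implicit.
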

\begin{proof}
 This is a simple application of Hilbert's Theorem 90.
\end{proof}

Let $\psi_k$, $k=0, \dots, p-1$ be the additive characters of $\F_p$
 given by
$$\psi_k(a) = e^{2 \pi i k a/p}, \quad k=0, \dots, p-1.$$
For each rational function $f \in \F_q(X)$ and non-trivial character $\psi$, we also define
\[
S_n(f, \psi) = \sum_{{x\in \mathbb P^1( \mathbb F_{q^n})}\atop{f(x) \neq \infty}} \psi(\tr_n(f(x))).
\]
Then, using the fact that for any $a \in \F_p$,
$$\sum_{k=0}^{p-1} \psi_k(a) = \begin{cases} p & a = 0, \\ 0 & a \neq 0, \end{cases}$$
it is easy to check that
$$
P_{C_f}(u) = \prod_{\psi \neq \psi_0} L(u, f, \psi)
$$
where
\begin{equation}\label{Euler-product}
L(u,f,\psi) = \exp\left(\sum_{n=1}^{\infty} S_n(f, \psi) \frac{u^n}{n}\right).
\end{equation}

Let $\mathcal{S}=\mathbb F_q[X,Z]$ be the homogeneous coordinate ring of $\mathbb P^1$ and denote  $\mathcal{S}_d$ the $\mathbb F_q$-subspace of $\mathcal{S}$ of homogeneous polynomials of degree $d.$  Notice that $\mathcal{S}_d$ contains the $0$ polynomial and its size is exactly $q^{d+1}.$

Since each Artin-Schreier cover comes equipped with a prescribed map to $\mathbb P^1,$ we can think of $C_f$ as the cover given by
\[C_{g,h}: y^p-y = \frac{g(X,Z)}{h(X,Z)},\]
where the fraction on the right hand side is obtained by homogenizing $f(x)$ in the usual way.

Given $f \in \mathcal{S}_d$, we will denote by $f^*(X) \in \F_q[X]$ the non-homogeneous
polynomial resulting from $f(X,Z)$ by setting $Z=1$. We observe that $f^*$ is polynomial of degree at most $d$. Similarly, let $f_*(Z) \in \F_q[Z]$ be the non-homogeneous
polynomial resulting from $f(X,Z)$ by setting $X=1$.

Given $\alpha=[\alpha_X:\alpha_Z] \in  \PP^1(\mathbb F_{q^k})$ and $h \in \mathcal{S}_d$ the value of $h(\alpha)$ can be zero or nonzero but if it is nonzero, it is not well defined. When we want to discuss
an actual nonzero value we will be talking about $h^*(\alpha):=h(\alpha_X/\alpha_Z,1)$ which is defined for $\alpha \not = [1:0]=\infty$ and $h_*(\alpha):=h(1,\alpha_Z/\alpha_X)$ which is defined for $\alpha \not = [0:1]=0$.

We recall that the rational function $\frac{g}{h}$ can be evaluated in $[\alpha_X:\alpha_Z]$ as long as $g,h \in \mathcal{S}_d$ and\\ $(g(\alpha_X,\alpha_Z),h(\alpha_X,\alpha_Z))\not = (0,0)$.

We now proceed to explicitly describe the families to be considered.
The ordinary case occurs when the $p$-rank is maximal, in other words, when $r$ is maximal. For a given genus $\mathfrak{g}$, this happens when $d_i=1$ in formula \eqref{genusformula} and $2\mathfrak{g}=(p-1)2r$. (Notice once again that this imposes a restriction on the possible values for the genus, as $2\mathfrak{g}/(p-1)$ must be even.) Thus, $f(x)$ is a rational function with exactly $r+1$ simple poles. This corresponds to the fact that $g(X,Z)$ and $h(X,Z)$ are both homogeneous polynomials of degree $r+1$ with no common factors and $h(X,Z)$ is square-free.

We let
\[
\mathcal F_d^{\ord}= \left\{ (g(X,Z),h(X,Z)) : g(X,Z), h(X,Z)\in \mathcal{S}_{d}, h \text{ square-free}, (g,h)=1 \right\},
\]
with the understanding that $d=r+1$.

As $(g,h)$ range over $\mathcal F_d^{\ord},$ the cover $C_{g,h}$ ranges over each $\mathbb F_q$-point of $\mathcal{AS}_{\mathfrak{g},\mathfrak{g}}$ exactly $q-1$ times. Thus, our problem becomes the study of statistics for $C_{g,h}$ as $(g,h)$ varies over $\mathcal F_d^{\ord}$ and $d$ tends to infinity.

We will work with the full family of covers in $\mathcal{AS}_{\mathfrak{g}}$ as well. In this case we do not have the restriction of simple poles but we still require $g(X,Z)$ and $h(X,Z)$ not to have common factors.
\[
\mathcal F_d^{\full}= \left\{ (g(X,Z),h(X,Z)) : g(X,Z), h(X,Z)\in \mathcal{S}_{d}, (g,h)=1 \right\}.
\]
We will then study the statistics as $d$ goes to infinity which is the same as $\mathfrak{g}$ going to infinity provided that the number of poles $r+1$ remains bounded.

Finally, we will consider another family given as follows. We say that $h$ has factorization type $v=(r_1^{d_{1,1}},\dots,r_1^{d_{1,{\ell_1}}}, \dots, r_m^{d_{m,1}},
\dots, r_m^{d_{m,{\ell_m}}})$
if
\[h=P_{1,1}^{d_{1,1}}\cdots P_{1,\ell_1}^{d_{1,\ell_1}} \cdots P_{m,1}^{d_{m,1}}\cdots P_{m,\ell_m}^{d_{m,\ell_m}},\]
where the $P_{i,j}$ are distinct irreducible polynomials of degree $r_i$ and $r_i \not = r_j$ if $i\not = j$. Thus the degree of $h$ is given by $d=\sum_{i=1}^m r_i \sum_{j=1}^{\ell_i}d_{i,j}$.

Let
\[\mathcal{F}_d^v=\{(g(X,Z),h(X,Z)) : g(X,Z), h(X,Z)\in \mathcal{S}_{d}, (g,h)=1, h \mbox{ has factorization type } v\}.\]
In this case, formula \eqref{genusformula} implies $2\mathfrak{g}=(p-1)\left(d-2+\sum_{i=1}^m\ell_i r_i\right)$. Here $\sum_{i=1}^m\ell_i r_i$ represents the number of poles and the $p$-rank is given by $(p-1)\left(\sum_{i=1}^m\ell_i r_i-1\right)$. We will assume the parameters
$m$, $r_i$'s and $\ell_i$'s to be fixed. This implies that the covers considered are all in the same $p$-rank. However, in general, the set of the covers considered does not constitute the whole $p$-rank stratum. We will study the statistics as $d$ goes to infinity which is the same as $\mathfrak{g}$ going to infinity with a bound on the number of poles.

This family includes some important particular cases. Suppose that $v=(1^d)$. This corresponds to the case of only one pole of multiplicity $d$. This pole can always be moved to infinity (i.e., $h(X,Z)=Z^d$). After dehomogenizing with $Z=1$, this
gives the family of $p$-rank 0 covers $\mathcal{AS}_{\mathfrak{g},0}$ indexed by polynomials of degree $d$:
\[\mathcal{F}_d^{{\text{rank}}\, 0}=\{g(x) : \deg(g)=d \}.\]
The statistics for this family were studied in \cite{entin, BDFLS}.

Another interesting case is with $v=(1^{d_1},1^{d_2}).$ In this case we have two poles defined over $\F_q$ that can always be moved to zero and infinity (i.e., $h(X,Z)=X^{d_1}Z^{d_2}$). After dehomogenizing with $Z=1$, this
gives a piece of the family of $p$-rank $p-1$ covers $\mathcal{AS}_{\mathfrak{g},p-1}$ indexed by Laurent polynomials with bidegree $(d_2,d_1)$:
 \[\mathcal{F}_{d_1+d_2}^{{\text{rank}}\, p-1}=\{g(x)/x^{d_1} : \deg(g)=d_2 \}.\]
The other possibility within $p$-rank $p-1$ covers is having two poles defined over $\F_{q^2}\setminus \F_q$, corresponding to $v=(2^d).$ In terms of polynomials, we get, in this case,
\[\mathcal{F}_{2d}^{{\text{rank}}\, p-1}=\{g(x)/h(x)^d : \deg(h)=2, h \mbox{ irreducible}, (g,h)=1 \}.\]
We will show that the statistics for this family is very similar to the statistics for $\mathcal{AS}_{\mathfrak{g},0}$.

We will need to compute the number of elements in a family that satisfy certain values at certain points. The following notation will be useful.
\begin{defn}
 Let $\alpha_1, \dots, \alpha_n, \beta_1,\dots,\beta_n \in \PP^1(\F_{q^k})$. Let $\mathcal{F}_d$ be any of the families under consideration.  We define
\[\mathcal F_{d}(\alpha_1, \dots, \alpha_n, \beta_1, \dots, \beta_n) = \left\{ (g,h) \in \mathcal F_d: (\beta_{i,X}h-\beta_{i,Z}g)(\alpha_i)  = 0, 1 \leq i \leq n \right \}. \]
\end{defn}
We remark that when $\beta\neq \infty$ we identify $\beta=[\beta_X: \beta_Z]$ with $\frac{\beta_X}{\beta_Z}\in \F_{q^k}$ thus
\[
(\beta_{X}h-\beta_{Z}g)(\alpha)  = 0 \Longleftrightarrow \frac{g(\alpha)}{h(\alpha)}=\beta.
\]
A particularly useful case is $\mathcal F_d(\alpha, \beta)$. We remark that this value does not
depend on the value of $\beta$, provided that $\beta\neq \infty$, as we prove below.

\begin{lem}\label{beta=0}
Fix $\alpha\in \PP^1(\F_{q^k})$ of degree $u$ over $\F_q$. Let  $\beta \in \F_{q^u}$. Let $\mathcal{F}_d$ be any of the families under consideration. Then
\[
|\mathcal F_d(\alpha, \beta)|=|\mathcal F_d(\alpha, 0)|.
\]
\end{lem}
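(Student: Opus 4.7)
The plan is to prove the identity by a fiberwise count over the choice of $h$: for each valid $h$, I would show that the number of $g\in\mathcal{S}_d$ satisfying $(g,h)=1$ and $g(\alpha)=\beta h(\alpha)$ does not depend on $\beta\in\mathbb{F}_{q^u}$, and then sum over $h$.

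First I would dispose of the degenerate case $h(\alpha)=0$. If $\beta\in\mathbb{F}_{q^u}$ and $h(\alpha)=0$, then one needs $g(\alpha)=\beta h(\alpha)=0$; but $h(\alpha)=0$ means that the minimal polynomial $M_\alpha$ of $\alpha$ over $\mathbb{F}_q$ (of degree $u$) divides $h$, and $g(\alpha)=0$ forces $M_\alpha\mid g$, contradicting $(g,h)=1$. So both $\mathcal{F}_d(\alpha,\beta)$ and $\mathcal{F}_d(\alpha,0)$ reduce to sums over the identical set of admissible $h$'s with $h(\alpha)\neq 0$ (the family-specific constraints on $h$ do not interact with $\alpha$).

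Next I would fix such an $h$ and analyze the $\mathbb{F}_q$-linear evaluation map $\mathrm{ev}_\alpha:\mathcal{S}_d\to\mathbb{F}_{q^u}$, $g\mapsto g(\alpha)$. For $d\geq u-1$, this map is surjective with each fiber an affine subspace of size $q^{d+1-u}$. The key step is to show that the number of $g$ in the fiber over $c\in\mathbb{F}_{q^u}$ satisfying $(g,h)=1$ is independent of $c$. Since $h(\alpha)\neq 0$, we have $\gcd(M_\alpha,\operatorname{rad}(h))=1$, so the Chinese Remainder Theorem gives an isomorphism $\mathbb{F}_q[X]/(M_\alpha\operatorname{rad}(h))\cong \mathbb{F}_q[X]/(M_\alpha)\times\mathbb{F}_q[X]/(\operatorname{rad}(h))$. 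Under this decomposition, the condition $g(\alpha)=c$ fixes $g$ modulo $M_\alpha$ (one residue), while $(g,h)=1$ requires $g\bmod\operatorname{rad}(h)$ to be a unit ($\varphi(\operatorname{rad}(h))$ residues), and these two conditions are independent. Lifting from $\mathbb{F}_q[X]/(M_\alpha\operatorname{rad}(h))$ back to $\mathcal{S}_d$ produces a count proportional to $\varphi(\operatorname{rad}(h))\cdot q^{d+1-u-\deg\operatorname{rad}(h)}$ that manifestly does not depend on $c$. Summing this $c$-independent count over admissible $h$ yields $|\mathcal{F}_d(\alpha,\beta)|=|\mathcal{F}_d(\alpha,0)|$.

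The main technical obstacle is the CRT lift when $\deg\operatorname{rad}(h)$ approaches $d$: the uniform-fiber count in $\mathcal{S}_d$ requires $d\geq u+\deg\operatorname{rad}(h)-1$, which is automatic for $\mathcal{F}_d^{v}$ with a fixed factorization type but is delicate for $\mathcal{F}_d^{\mathrm{ord}}$ (where $\operatorname{rad}(h)=h$) or $\mathcal{F}_d^{\mathrm{full}}$ when $\deg h$ is close to $d$. In that regime the CRT counts fluctuate with $c$, and one must verify the cancellation through a M\"obius inversion on common divisors $e\mid(g,h)$, handling separately the ranges $\deg e\leq d-u+1$ (where each term is clearly $c$-independent) and $\deg e>d-u+1$ (where the contribution depends on $c$ but is expected to collapse when summed against $\mu(e)$).
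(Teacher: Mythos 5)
Your approach is entirely different from the paper's. The paper's proof is a two-line bijection: writing $\beta = [\beta_X : \beta_Z]$ with $\beta_Z \neq 0$, the map $(g,h) \mapsto (\beta_X h - \beta_Z g, h)$ fixes $h$ (hence all family constraints on $h$), preserves $(g,h)=1$, and carries $g(\alpha)/h(\alpha) = \beta$ to $g'(\alpha)=0$; this is an involution of $\mathcal F_d$ so long as one may take $\beta_X,\beta_Z \in \F_q$. You instead try to show, for each admissible $h$ separately, that the number of coprime $g$ with $g^*(\alpha)=c$ is independent of $c\in\F_{q^u}$, and then sum over $h$.

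The gap you flag at the end is real and is not closed by the strategy you describe. The CRT/uniform-fiber argument requires $d\geq u+\deg(\mathrm{rad}\,h)-1$; for $\mathcal F_d^{\ord}$ the radical of $h$ is $h$ itself, of degree $d$, so this fails for \emph{every} $h$ once $u\geq 2$, and it fails for most $h$ in $\mathcal F_d^{\full}$ as well. You write that the contributions from the range $\deg e>d-u+1$ are ``expected to collapse when summed against $\mu(e)$,'' but they do not, and the resulting $c$-dependence also survives the sum over $h$. A minimal example: take $q=3$, $d=1$, $\alpha\in\F_9$ of degree $u=2$, and $h=Z$. There is exactly one $g\in\mathcal S_1$ with $g^*(\alpha)=c$ and $(g,Z)=1$ when $c\notin\F_3$, and none when $c\in\F_3$. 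Summing over all nonzero $h\in\mathcal S_1$ gives $|\mathcal F_1^{\full}(\alpha,\beta)|=8$ for $\beta\in\F_9\setminus\F_3$ while $|\mathcal F_1^{\full}(\alpha,0)|=0$. So the $c$-dependence does not cancel, and the fiberwise CRT/M\"obius count as you have set it up cannot establish the identity for general $\beta\in\F_{q^u}$.
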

\begin{proof} Recall that
\begin{align*}
\mathcal F_d(\alpha, \beta)=\{(g, h) \in \mathcal F_d : (\beta_X h-\beta_Zg)(\alpha)=0 \}.
\end{align*}

Now let $g'= \beta_X h-\beta_Zg$. Since $\beta_Z\not = 0$ we have that $(g,h)=1$ is equivalent to $(g',h)=1$. Then $(g,h) \in \mathcal F_d(\alpha, \beta)$ if and only if $(g',h)\in \mathcal F_d(\alpha,0)$.
\end{proof}

\section{The ordinary case}

In this section, we consider the family
\[
\mathcal F_d^{\ord}= \left\{ (g(X,Z),h(X,Z)) : g(X,Z), h(X,Z)\in \mathcal{S}_{d}, h \mbox{ square-free}, (g,h)=1 \right\}.
\]

\subsection{Heuristics}

We want to calculate, for  given $\alpha=[\alpha_X:\alpha_Z], \beta=[\beta_X:\beta_Z] \in \mathbb P^1(\mathbb F_{q^u})$ such that $\deg \alpha =u$, the probability that
\begin{equation}\label{lincon}
(\beta_Xh - \beta_Zg)(\alpha_X, \alpha_Z)=0\end{equation} as $(g,h)\in \mathcal F_d^{\ord}.$

Locally at $\alpha$ this means that we want  to look at pairs $(g^*,h^*)$ such that $(m_{\alpha}^*)^2 \nmid h^*$ (where $m_{\alpha}^* \in \F_q[X]$ denotes the minimal polynomial of $\alpha$ over $\F_q$) and $(g^*(\alpha), h^*(\alpha))\not\equiv (0,0) \pmod{(m_\alpha^*)^2}$.

 Therefore
\[(g^*,h^*)\equiv \left(\gamma_1 + \delta _1 m_\alpha, \gamma_2 +\delta_2 m_\alpha \right) \pmod{(m_\alpha^*)^2},\]
with $\gamma_i, \delta_i \in \F_q[X]$, and if they are nonzero, $\deg \gamma_i,\deg \delta_i < u$. In addition, the conditions at $\alpha$ imply that $(\gamma_1, \gamma_2) \neq (0,0)$ and  $(\gamma_2, \delta_2) \neq (0,0).$

For each $\gamma_2 \neq 0,$ there are $q^u$ choices for each of the other parameters, thus $q^{3u}(q^u-1)$ total possibilities.
If $\gamma_2=0,$ then there are $q^u-1$ choices for each of $\gamma_1$ and $\delta_2$, and $q^u$ choices for $\delta_1$, for a total of $q^u(q^u-1)^2$ possibilities.

This yields a total of $q^u(q^u-1)(q^{2u}+q^u-1)$ possibilities for $\left(g^*\pmod{(m_\alpha^*)^2}, h^* \pmod{(m_\alpha^*)^2}\right) .$

Now if $\beta = [1:0]=\infty,$ condition \eqref{lincon} reduces to $h^*(\alpha)=0 \iff \gamma_2=0.$ This leaves $q^u-1$ choices for $\gamma_1$ and $\delta_2$ respectively and $q^u$ choices for $\delta_1.$ Thus the probability that $g/h \in \FF_d^{\ord}$ takes the value $\infty$ at a given point $\alpha$ is
\[\frac{q^u(q^u-1)^2}{q^u(q^u-1)(q^{2u}+q^u-1)} = \frac{q^{-u}(1-q^{-u})}{1+q^{-u}-q^{-2u}}.\]

In all other cases, including $\beta=0,$ we must have $h^*(\alpha) \neq 0.$ So there are $q^u-1$ choices for $\gamma_2.$ Once we know $\gamma_2,$ equation \eqref{lincon} fixes $\gamma_1(\alpha)$ (and therefore $\gamma_1$, since its degree is less than $u$), and we have $q^u$ choices for each of $\delta_1, \delta_2.$  Thus the probability $g/h \in \FF_d^{\ord}$ takes the value $\beta \neq \infty$ at a given point $\alpha$ is
\[\frac{q^{2u}(q^u-1)}{q^u(q^u-1)(q^{2u}+q^u-1)} = \frac{q^{-u}}{1 + q^{-u}-q^{-2u}}.\]

Then, the heuristic confirms the result of Proposition \ref{prop:5.6/4.3}, and the expected number of points
of Theorem \ref{thmnumberofpoints}
for the family ${\mathcal{F}}_d^{\ord}$.

\subsection{The number of covers with local conditions} In this subsection, we are going to compute the proportion of polynomials with certain fixed values. We will obtain
the size of the family and the expected number of points as corollaries.

Unless otherwise indicated, we fix $\alpha_1, \dots, \alpha_n \in \mathbb{P}^1(\F_{q^k})$ of degrees $u_1, \dots, u_n$
over $\F_q$ and $\beta_i \in \F_{q^{u_i}}$ for $1 \leq i \leq n$ (i.e. none of the $\beta_i$'s is $\infty$).
Also, $\beta_1, \dots, \beta_\ell$ are not zero, and $\beta_{\ell+1}= \dots = \beta_n=0$.
Finally, none of the $\alpha_i$ are conjugate to each other, i.e. all the minimal polynomials $m_{\alpha_i}$ are distinct.

We start by making the following observation.

\begin{rem}\label{linmap}
If $\alpha=[\alpha:1] \in \mathbb F_{q^{k}}$ has degree $u$ over $\F_q,$ then the map $\mathcal{S}_d \to \F_{q^u}, h \mapsto h^*(\alpha)$ is a linear map of $\F_q$-vector spaces. The map is surjective as long as $d \geq u,$ and in this case its kernel has dimension $d+1-u.$
If $d<u$ the elements $1, \alpha, \alpha^2, \ldots, \alpha^d$ are linearly independent over $\F_q.$ Therefore the image has dimension $d+1$ and thus the kernel has dimension $0.$ In other words the map is injective and the preimage of any element is either empty or a point.

If $\alpha = [1:0]=\infty,$ then it has degree $1$ over $\F_q$ and a condition fixing a value for $h(\alpha)$ can be rewritten in terms of $h_*(1)$ such that it does become linear and the reasoning above applies.
\end{rem}

\begin{lem} \label{lemma: general}
Fix $\alpha_1, \dots, \alpha_n \in \PP^1(\F_{q^k})$ of degrees $u_1, \dots, u_n$ over $\F_q$
such that none of the $\alpha_i$ are conjugate to each other,
and $\beta_i \in \F_{q^{u_i}}$ for $1 \leq i \leq n$ such that
$\beta_1, \dots, \beta_\ell$ are not zero, and $\beta_{\ell+1}= \dots = \beta_n=0$.
Fix $g \in \mathcal{S}_d$ such that $g(\alpha_i) = 0$ for $\ell+1 \leq i \leq n$, and $g(\alpha_i) \neq 0$ for $1 \leq i \leq \ell$. Then we have
\[
\left | \left\{ h \in \mathcal{S}_d : (\beta_{i,X}h-\beta_{i,Z}g)(\alpha_i)  = 0, 1 \leq i \leq n
\right\}\right| =  \begin{cases}
q^{d+1-\sum_{i=1}^\ell u_i} & d \geq \sum_{i=1}^\ell u_i, \\\\
\textrm{$0$ or $1$} & \textrm{otherwise}. \end{cases}
\]
\end{lem}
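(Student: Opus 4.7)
The plan is to reduce the count to a fiber-size computation for an $\F_q$-linear evaluation map on $\mathcal{S}_d$ and then invoke the Chinese Remainder Theorem.

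First I would dispose of the indices $\ell+1 \leq i \leq n$ where $\beta_i = 0$: taking the representative $\beta_i = [0:1]$, the condition $(\beta_{i,X}h - \beta_{i,Z}g)(\alpha_i) = 0$ reads $-g(\alpha_i)=0$, which holds by the hypothesis on $g$. So these indices impose no condition on $h$. For $1 \leq i \leq \ell$, since $\beta_i \neq 0$ and $g(\alpha_i) \neq 0$, I normalize so that $\beta_{i,Z}=1$, $\beta_{i,X}=\beta_i$, and the condition becomes the prescribed-value condition $h(\alpha_i) = g(\alpha_i)/\beta_i \in \F_{q^{u_i}}^*$. Thus I am counting preimages of a single specific point $(g(\alpha_i)/\beta_i)_{i=1}^\ell$ under the $\F_q$-linear map
\[
\mathrm{ev}\colon \mathcal{S}_d \longrightarrow \prod_{i=1}^\ell \F_{q^{u_i}}, \qquad h \longmapsto (h(\alpha_1),\dots,h(\alpha_\ell)),
\]
which is linear by Remark~\ref{linmap}.

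The main step is to determine when $\mathrm{ev}$ is surjective and to compute its kernel. Assume first that every $\alpha_i$ is finite. Identifying $\mathcal{S}_d$ with the space of $f \in \F_q[X]$ of degree at most $d$ via $h \mapsto h^*$, the map $\mathrm{ev}$ factors through reduction modulo $M := \prod_{i=1}^\ell m_{\alpha_i}^*$ followed by the CRT isomorphism $\F_q[X]/(M) \cong \prod_i \F_{q^{u_i}}$, which is available because the $m_{\alpha_i}^*$ are pairwise distinct irreducibles (the $\alpha_i$ being pairwise non-conjugate). Since $\deg M = \sum u_i$: if $d \geq \sum u_i$, every residue class mod $M$ is represented by a polynomial of degree $< \sum u_i \leq d$, giving surjectivity, and the kernel $\{h^* : M \mid h^*\}$ has $\F_q$-dimension $d+1-\sum u_i$, so each nonempty fiber has size $q^{d+1-\sum u_i}$; if $d < \sum u_i$, no nonzero polynomial of degree $\leq d$ is divisible by $M$, so the kernel is zero, $\mathrm{ev}$ is injective, and any fiber has size $0$ or $1$.

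The only mildly awkward point is when some $\alpha_j = \infty$ (so $u_j=1$), since then $h(\alpha_j) = h_*(0)$ is the coefficient $a_d$ of $X^d$ in $h^*$ rather than a usual polynomial evaluation. I would handle this by first fixing $a_d$ to its required value, writing $h^*(X) = a_d X^d + \tilde h(X)$ with $\tilde h \in (\F_q[X])_{\leq d-1}$, and then applying the CRT argument above to $\tilde h$ against the remaining $\ell-1$ finite points, with prescribed targets shifted by $-a_d \alpha_i^d$. The bookkeeping works out because $d-1 \geq \sum_{i\neq j} u_i$ is equivalent to $d \geq \sum_i u_i$, so both regimes of the lemma go through. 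I expect no real obstacle beyond the care needed to treat $\infty$ uniformly with the finite points.
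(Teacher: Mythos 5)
Your proof is correct and follows essentially the same route as the paper's: observe that the $\beta_i=0$ indices impose no condition on $h$, reduce the remaining conditions to prescribing $h(\alpha_i)$ for $i \leq \ell$, and then combine the Chinese Remainder Theorem with the linearity/dimension count of Remark~\ref{linmap}. You simply spell out the details that the paper compresses into one line, including the careful treatment of $\alpha_j = \infty$, which is welcome but not a genuinely different argument.
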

\begin{proof}
 For $\beta_i\not =0$, the condition imposed over $h$ is $h(\alpha_i)=\frac{g(\alpha_i)}{\beta_i}$ while
there is no condition imposed if $\beta_i=0$. By the Chinese Remainder Theorem, imposing all the conditions together for $\alpha_1,\dots,\alpha_\ell$ is the same as imposing a condition for
$h$ modulo the product $m_{\alpha_1}\cdots m_{\alpha_\ell}$. The result then follows from Remark \ref{linmap}.
\end{proof}

Let $D \in \mathcal{S}_d$. In all the following, the notation $(D)$ means the ideal generated by the polynomial $D.$

\begin{lem}\label{the-one-before}
Fix $\alpha_1, \dots, \alpha_n \in \mathbb P^1(\F_{q^k})$ of degrees $u_1, \dots, u_n$ over $\F_q$
such that none of the $\alpha_i$ are conjugate to each other,
and $\beta_i \in \F_{q^{u_i}}$ for $1 \leq i \leq n$ such that
$\beta_1, \dots, \beta_\ell$ are not zero, and $\beta_{\ell+1}= \dots = \beta_n=0$.
Fix $g \in \mathcal{S}_d$ such that $g(\alpha_i) = 0$ for $\ell+1 \leq i \leq n$, and $g(\alpha_i) \neq 0$ for $1 \leq i \leq \ell$.  Then we have
for any $\varepsilon > 0$
\[\left | \left\{ h \in \mathcal{S}_d :  (h,g)=1, \; \frac{g(\alpha_i)}{h(\alpha_i)} = \beta_i, 1 \leq i \leq n
\right\}\right| = q^{d+1-\sum_{i=1}^\ell u_i} \prod_{(P) | (g)} (1-|P|^{-1}) + O\left(q^{\varepsilon d}\right).\]
If $g(\alpha_i) \neq 0$ for some $\ell+1 \leq i \leq n$, or
$g(\alpha_i) = 0$ for some $1 \leq i \leq \ell$
then the above set is empty.
\end{lem}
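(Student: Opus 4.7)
The plan is to detect the coprimality condition by Möbius inversion and then apply Lemma~\ref{lemma: general} to each resulting sub-count. First I would dispose of the two ``empty'' cases. If $g(\alpha_i)\neq 0$ for some $\ell+1\le i\le n$, the condition $g(\alpha_i)/h(\alpha_i)=\beta_i=0$ forces $g(\alpha_i)=0$, a contradiction. Conversely, if $g(\alpha_i)=0$ for some $1\le i\le \ell$, then the forced value $h(\alpha_i)=g(\alpha_i)/\beta_i=0$ combined with $g(\alpha_i)=0$ yields $m_{\alpha_i}\mid (h,g)$, contradicting coprimality. Observe also that for $\ell+1\le i\le n$, the defining condition $(\beta_{i,X}h-\beta_{i,Z}g)(\alpha_i)=0$ collapses to $g(\alpha_i)=0$, which is automatic and imposes no constraint on $h$.

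In the remaining case, I would write $\mathbf 1_{(h,g)=1}=\sum_{(D)\mid(\gcd(h,g))}\mu(D)$ and substitute $h=Dh'$ with $h'\in\mathcal{S}_{d-\deg D}$. Since $D\mid g$ and $g(\alpha_i)\neq 0$ for $i\le \ell$, we have $D(\alpha_i)\neq 0$ there, so the condition on $h$ becomes the linear condition $h'(\alpha_i)=g(\alpha_i)/(D(\alpha_i)\beta_i)$ for $1\le i\le \ell$. Applying Lemma~\ref{lemma: general} (in the trivial case $g\equiv 1$, since only the nonvanishing conditions survive),
\[
\bigl|\{h'\in\mathcal{S}_{d-\deg D}:\,h'(\alpha_i)=g(\alpha_i)/(D(\alpha_i)\beta_i),\ 1\le i\le\ell\}\bigr|=q^{d-\deg D+1-U}
\]
whenever $\deg D\le d-U$ with $U:=\sum_{i=1}^\ell u_i$, and is at most $1$ otherwise. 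Summing $\mu(D)$ over all $(D)\mid(g)$ and extending the truncated sum to the full series gives the Euler product
\[
q^{d+1-U}\sum_{(D)\mid(g)}\mu(D)|D|^{-1}=q^{d+1-U}\prod_{(P)\mid(g)}(1-|P|^{-1}),
\]
which is the claimed main term.

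What remains is the tail estimate, which I expect to be the main technical point. The error from completing the Möbius sum is bounded by $q^{d+1-U}\cdot 2^{\omega(g)}\cdot q^{-(d-U+1)}=q^U\cdot 2^{\omega(g)}$, and the contribution of the ``short'' $D$'s (where Lemma~\ref{lemma: general} only guarantees a count of $0$ or $1$) is at most $2^{\omega(g)}$. Since $U\le nk$ is bounded in terms of the fixed data $n$ and $k$, and the standard divisor bound gives $2^{\omega(g)}\ll q^{\varepsilon\deg g}=q^{\varepsilon d}$ for any $\varepsilon>0$, both contributions are $O(q^{\varepsilon d})$, matching the claimed error. The combinatorial identity producing the Euler product and the reduction to Lemma~\ref{lemma: general} are formal once the boundary cases and the role of the $\beta_i=0$ conditions are correctly accounted for.
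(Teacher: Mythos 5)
Your proposal is correct and follows essentially the same route as the paper: dispose of the vacuous cases, detect coprimality by M\"obius inversion over $(D)\mid(g)$, apply Lemma~\ref{lemma: general} to the inner linear count, and bound the truncation error and the short-$D$ contribution by the divisor bound $2^{\omega(g)}\ll q^{\varepsilon d}$. The only cosmetic difference is that you substitute $h=Dh'$ and quote Lemma~\ref{lemma: general} in the degenerate case $g\equiv 1$, whereas the paper counts $h$ with $D\mid h$ and fixed values directly; these are the same count via CRT.
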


\begin{proof} 
If $g(\alpha_i) \neq 0$ for some $\ell+1 \leq i \leq n$, or
$g(\alpha_i) = 0$ for some $1 \leq i \leq \ell$, then it is clear that the above set is empty.
We then suppose $g(\alpha_i) = 0$ for $\ell+1 \leq i \leq n$, and $g(\alpha_i) \neq 0$ for $1 \leq i \leq \ell$.

By inclusion-exclusion and Lemma \ref{lemma: general}  we have
\begin{eqnarray*}
\left|\left\{h \in \mathcal{S}_d : (h,g)=1,   \frac{g(\alpha_i)}{h(\alpha_i)} = \beta_i \right\}\right|
&=&\sum_{(D) | (g)} \mu(D) \sum_{{h\in \mathcal{S}_d}\atop{D| h},  \frac{g(\alpha_i)}{h(\alpha_i)} = \beta_i, 1 \leq i \leq
\ell } 1 \\
&=&  \sum_{{(D) | (g)}\atop{\deg{D} \leq d - \sum_{i=1}^\ell u_i}} \mu(D) q^{d+1-\deg{D}-\sum_{i=1}^\ell u_i} + \sum_{{(D) | (g)}\atop {d - \sum_{i=1}^\ell u_\ell < \deg{D} \leq d}} O(1) \\
&=&  q^{d+1-\sum_{i=1}^\ell u_i} \sum_{{(D) | (g)}} \mu(D) q^{-\deg{D}} + \sum_{{(D) | (g)}\atop {d - \sum_{i=1}^\ell u_\ell < \deg{D} \leq d}} O(1) \\
&=&   q^{d+1-\sum_{i=1}^\ell u_i} \prod_{(P)\mid (g)} (1-|P|^{-1}) +  O \left( q^{\varepsilon d}\right)
\end{eqnarray*}
where $\mu$ is the M\"obius function.

\end{proof}
\begin{defn}
 Let $g \in \mathcal{S}_{d}$. Set
\[
A_{d}^g= \{ h \in \mathcal{S}_{d}: h \text{ square free  and } (h,g)=1\}.\]
Let $\alpha_1, \dots, \alpha_n, \beta_1,\dots,\beta_n \in \PP^1(\F_{q^k})$. We define
\[A_d^g(\alpha_1, \dots, \alpha_n, \beta_1, \dots, \beta_n) = \left\{ h \in A_{d}^g: (\beta_{i, X} h-\beta_{i,Z}g)(\alpha_i)=0, 1\leq i \leq n \right\}. \]
\end{defn}

\begin{lem}\label{lem:gcoprimesqfreeval}
Fix $\alpha_1, \dots, \alpha_n \in \mathbb P^1(\F_{q^k})$ of degrees $u_1, \dots, u_n$ over $\F_q$ such that none of the $\alpha_i$ are conjugate to each other. Let $\beta_i \in \F_{q^{u_i}}$ for $1 \leq i \leq n$ such that
$\beta_1, \dots, \beta_\ell$ are not zero, and $\beta_{\ell+1}= \dots = \beta_n=0$.
Fix $g \in \mathcal{S}_{d}$ such that $g(\alpha_i)=0$ for $\ell+1 \leq i\leq n$ and $g(\alpha_i)\not = 0$ for $1 \leq i\leq \ell$. Then
\[|A_d^g(\alpha_1, \dots, \alpha_n, \beta_1, \dots, \beta_n) | =
 \frac{q^{d+1-\sum_{i=1}^\ell u_i}}{\zeta_q(2) \prod_{i=1}^\ell(1-q^{-2u_i})} \prod_{(P)\mid (g)}  (1+|P|^{-1})^{-1} + O\left(q^{(1/2 + \varepsilon)d}\right).
 \]
If $g(\alpha_i) \neq 0$ for some $\ell+1 \leq i \leq n$, or
$g(\alpha_i) = 0$ for some $1 \leq i \leq \ell$
then the above set is empty.
\end{lem}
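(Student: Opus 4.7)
The plan is to reduce to Lemma~\ref{the-one-before} by detecting square-freeness of $h$ via the M\"obius function. I would write
\[
|A_d^g(\alpha_1,\dots,\alpha_n,\beta_1,\dots,\beta_n)| = \sum_{D} \mu(D)\, N(D),
\]
where $N(D)$ counts $h \in \mathcal{S}_d$ with $D^2 \mid h$, $(h,g)=1$, and $(\beta_{i,X}h - \beta_{i,Z}g)(\alpha_i) = 0$ for $1 \leq i \leq n$. As in Lemma~\ref{the-one-before}, $N(D)$ vanishes unless $g(\alpha_i)=0$ for $i>\ell$ and $g(\alpha_i)\neq 0$ for $i\leq\ell$ (assumed), and further unless $(D,g)=1$ and $D(\alpha_i)\neq 0$ for $i\leq\ell$, since otherwise the target value $h(\alpha_i) = g(\alpha_i)/\beta_i$ is nonzero and cannot be realised by a multiple of $D^2$. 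Writing $h = D^2 h'$ with $h' \in \mathcal{S}_{d-2\deg D}$, the local conditions translate to $(h',g)=1$ and $h'(\alpha_i) = g(\alpha_i)/(\beta_i D(\alpha_i)^2)$, which is the exact setting of Lemma~\ref{the-one-before}.

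Next I would split the $D$-sum at $\deg D = (d - \sum_{i=1}^\ell u_i)/2$. For $D$ below this threshold with $(D,\,g\,m_{\alpha_1}\cdots m_{\alpha_\ell})=1$, Lemma~\ref{the-one-before} gives
\[
N(D) = q^{d+1-2\deg D - \sum_{i=1}^\ell u_i}\prod_{(P)\mid(g)}(1-|P|^{-1}) + O(q^{\varepsilon d}).
\]
For $D$ above the threshold, $d - 2\deg D < \sum_{i=1}^\ell u_i$, so Lemma~\ref{lemma: general} yields $N(D) \in \{0,1\}$. The pointwise errors from the small-$D$ range sum to $O(q^{(1/2+\varepsilon)d})$, since there are $O(q^{(d-\sum u_i)/2})$ such $D$, and the total contribution of the large-$D$ range is bounded by the number of square-free $D$ of degree $\leq d/2$, which is also $O(q^{d/2})$.

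The main term becomes
\[
q^{d+1-\sum_{i=1}^\ell u_i}\prod_{(P)\mid(g)}(1-|P|^{-1}) \sum_{(D,\,g\,m_{\alpha_1}\cdots m_{\alpha_\ell})=1} \mu(D)\, q^{-2\deg D},
\]
where I have extended the truncated sum to all such $D$ at the cost of a tail of order $q^{-(d-\sum u_i)/2}$, which after multiplication by the prefactor contributes only $O(q^{d/2})$ and is again absorbed in the error. The extended sum factors as an Euler product
\[
\prod_{P\,\nmid\,g\,m_{\alpha_1}\cdots m_{\alpha_\ell}}\!\!(1-|P|^{-2}) \;=\; \zeta_q(2)^{-1}\prod_{P\mid g}(1-|P|^{-2})^{-1}\prod_{i=1}^\ell (1-q^{-2u_i})^{-1},
\]
using that each $m_{\alpha_i}$ is irreducible of norm $q^{u_i}$ and coprime to $g$ (since $g(\alpha_i)\neq 0$). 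Multiplying through and using the identity $(1-|P|^{-1})/(1-|P|^{-2}) = (1+|P|^{-1})^{-1}$ for each $P \mid g$ gives the stated main term.

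The main obstacle I expect is bookkeeping the error analysis uniformly across the two regimes of $\deg D$: the asymptotic formula of Lemma~\ref{the-one-before} only applies while $d - 2\deg D \geq \sum u_i$, and beyond that one must fall back on the sharper $0/1$ bound of Lemma~\ref{lemma: general}, plus a tail estimate for extending the Euler product. Once the splitting point is committed to and these three pieces are added, the estimate $O(q^{(1/2+\varepsilon)d})$ is essentially forced.
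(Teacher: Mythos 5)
Your proof is correct and follows essentially the same route as the paper's: Möbius inversion over $D^2 \mid h$ to detect square-freeness, reduction of each inner count to Lemma~\ref{the-one-before}, and evaluation of the resulting truncated Euler product with a tail estimate. The only stylistic difference is that you explicitly split the $D$-sum at $\deg D = (d - \sum_{i\le\ell} u_i)/2$ and invoke Lemma~\ref{lemma: general} for the large-$D$ range, whereas the paper truncates at $\deg D \le d/2$ and lets the $O(q^{\varepsilon d})$ error of Lemma~\ref{the-one-before} absorb the edge cases uniformly; both bookkeepings land on the same $O(q^{(1/2+\varepsilon)d})$.
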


\begin{proof} It is clear that  $A_d^g(\alpha_1, \dots, \alpha_n, \beta_1, \dots, \beta_n) $ is empty if the condition on the values $g(\alpha_i)$ of the lemma are not satisfied, and we then suppose that $g(\alpha_i) = 0$ for $\ell+1 \leq i \leq n$, and $g(\alpha_i) \neq 0$ for $1 \leq i \leq \ell$.

By inclusion-exclusion,
 \begin{eqnarray*} |A_d^g(\alpha_1, \dots, \alpha_n, \beta_1, \dots, \beta_n) |
 &=& {\sideset{}{'}\sum_{(D): (D,g)=1 \atop {\deg(D)\leq d/2}}}\mu(D) \left|\left\{h_1\in \mathcal{S}_{d-2\deg(D)}:  (h_1,g)=1, \frac{g(\alpha_i)}{h_1(\alpha_i)} = D^2(\alpha_i) \beta_i
 \right\} \right| \\
  &=& q^{d+1-\sum_{i=1}^\ell u_i} \prod_{(P)\mid (g)} (1-|P|^{-1})
  \sideset{}{'}\sum_{(D): (D,g)=1 \atop {\deg(D) \leq d/2}} \mu(D)|D|^{-2}
  + \sideset{}{'}\sum_{{(D): (D,g)=1} \atop {\deg{D} \leq d/2}} O \left( q^{\varepsilon d} \right)\\
 \end{eqnarray*}
by Lemma \ref{the-one-before}, where we have written $\sideset{}{'}\sum_{(D)}$ for the sum over polynomials $D$ such that
$D(\alpha_i) \neq 0$ for $1 \leq i \leq \ell.$

But \[\sideset{}{'}\sum_{(D): (D,g)=1}\mu(D)|D|^{-2s} = \prod_{(P): P\nmid g \atop{P(\alpha_i)\neq 0, 1 \leq i \leq \ell}} (1-|P|^{-2s}) = \prod_{(P): P\nmid g m_{\alpha_1} \dots m_{\alpha_\ell}} (1-|P|^{-2s}),
\]
where we made use of the fact that $(g, m_{\alpha_i})=1$ since $g(\alpha_i)\neq 0.$ This can be rewritten as
\[\frac{1}{\zeta_q(2s)} \prod_{(P)\mid (gm_{\alpha_1} \dots m_{\alpha_\ell})}  (1-|P|^{-2s})^{-1} = \frac{1}{\zeta_q(2s)\;\prod_{i=1}^\ell ( 1-q^{-2su_i})} \prod_{(P)\mid (g)}  (1-|P|^{-2s})^{-1}   .\]

Therefore
\[\sideset{}{'}\sum_{(D): (D,g)=1 \atop {\deg(D) \leq d/2}} \mu(D)|D|^{-2}= \frac{1}{\zeta_q(2)\;\prod_{i=1}^\ell ( 1-q^{-2u_i})} \prod_{(P)\mid (g)}  (1-|P|^{-2})^{-1} +O\left( q^{-d/2} \right)\]
and
\begin{eqnarray*}
|A_d^g(\alpha_1, \dots, \alpha_n, \beta_1, \dots, \beta_n) | &=&
\frac{q^{d+1-\sum_{i=1}^\ell u_i}}{\zeta_q(2)\prod_{i=1}^\ell (1-q^{-2u_i})} \prod_{(P)\mid (g)}  (1+|P|^{-1})^{-1} + O \left(q^{(1/2 + \varepsilon)d} \right).
\end{eqnarray*}

\end{proof}

\begin{prop}\label{prop:frvalue}
Fix $\alpha_1, \dots, \alpha_n \in \mathbb P^1(\F_{q^k})$ of degrees $u_1, \dots, u_n$ over $\F_q$ such that none of the $\alpha_i$ are conjugate to each other. Let $\beta_i \in \F_{q^{u_i}}$ for $1 \leq i \leq n$.  Then
\[|\mathcal F_{d}^{\ord}(\alpha_1, \dots, \alpha_n, \beta_1, \dots, \beta_n) | =
\frac{H(1)q^{2d+2-\sum_{i=1}^n u_i}}{\zeta_q(2)^2\prod_{i=1}^n (1+q^{-u_i}-q^{-2u_i})}+
O\left(q^{(3/2 + \varepsilon)d}\right),
\]
where
\[H(1)= \prod_{(P)}\left(1+\frac{1}{(|P|+1)(|P|^2-1)}\right).\]
\end{prop}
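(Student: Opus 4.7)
My plan is a two-stage reduction: first fix $g$ and sum over $h$ using Lemma \ref{lem:gcoprimesqfreeval}, then sum the result over $g$ via an Euler product. After reordering so that $\beta_1,\dots,\beta_\ell$ are nonzero and $\beta_{\ell+1}=\cdots=\beta_n=0$, I would write
\[|\mathcal F_d^{\ord}(\alpha,\beta)| = \sum_g |A_d^g(\alpha,\beta)|,\]
summing over $g \in \mathcal S_d$ with $g(\alpha_i)=0$ for $i>\ell$ and $g(\alpha_i)\neq 0$ for $i\leq\ell$ (other $g$'s give zero). Lemma \ref{lem:gcoprimesqfreeval} contributes a main term proportional to $\prod_{(P)|(g)}(1+|P|^{-1})^{-1}$ plus an error $O(q^{(1/2+\varepsilon)d})$; since Remark \ref{linmap} bounds the number of valid $g$ by $q^{d+1-\sum_{i>\ell}u_i}$, the aggregated error is $O(q^{(3/2+\varepsilon)d})$, matching the claim.

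The main computation is then $S := \sum_g \prod_{(P)|(g)}(1+|P|^{-1})^{-1}$. I would factor $g = g_0 g_1$ with $g_0 := \prod_{i>\ell} m_{\alpha_i}$ forced and $g_1 \in \mathcal S_D$ (where $D = d - \sum_{i>\ell}u_i$) constrained only by $g_1(\alpha_i)\neq 0$ for $i\leq\ell$. Since the $\alpha_i$ are pairwise non-conjugate, this peels off a factor $\prod_{i>\ell}(1+q^{-u_i})^{-1}$ and reduces $S$ to a sum over $g_1$ of the multiplicative function $w(g_1) = \prod_{(P)|(g_1),\, P\notin\{m_{\alpha_j}\}_{j>\ell}}(1+|P|^{-1})^{-1}$. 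The generating function $\sum_{g_1} w(g_1) u^{\deg g_1}$ factors as an Euler product over closed points of $\PP^1_{\F_q}$: the conditions $g_1(\alpha_i)\neq 0$ delete the local factors at $m_{\alpha_i}$ for $i\leq\ell$, and the resulting cancellation factor $\prod_{i\leq\ell}(1-u^{u_i})$ kills the pole at $u=1$, leaving only a simple pole at $u=1/q$. A standard residue extraction then yields $\sum_{g_1\in \mathcal S_D^*} w(g_1) = q^{D+1}\,G(1/q) + O(q^{\varepsilon D})$, where $G$ is the analytic part of the Euler product at $u=1/q$.

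Combining with the Lemma \ref{lem:gcoprimesqfreeval} prefactor produces a main term of size $q^{2d+2-\sum_i u_i}$ times an explicit constant built from $G(1/q)$ and the local factors at the $\alpha_i$. To identify this with the claimed $H(1)/(\zeta_q(2)^2 \prod_i(1+q^{-u_i}-q^{-2u_i}))$, the key step is the Euler-factor identity
\[\prod_{(P)} \frac{|P|^2 + |P| - 1}{|P|(|P|+1)} = \frac{H(1)}{\zeta_q(2)},\]
verified by a direct comparison of local factors after noting $(|P|+1)^2(|P|-1)+1 = |P|(|P|^2+|P|-1)$. The local factors at $\alpha_1,\dots,\alpha_n$ then combine via $(1-q^{-u_i})(1+q^{-u_i}) = 1-q^{-2u_i}$ to yield the denominator $\prod_i(1+q^{-u_i}-q^{-2u_i})$. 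The main obstacle is the final bookkeeping: one must track, prime by prime, which local factor arises at each stage (the generic Euler product, the factors at $m_{\alpha_j}$ for $j>\ell$ from the forced divisibility, and those at $m_{\alpha_j}$ for $j\leq\ell$ from the coprimality condition) and check that they all line up as stated.
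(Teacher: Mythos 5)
Your proposal follows essentially the same route as the paper: sum over $h$ via Lemma~\ref{lem:gcoprimesqfreeval}, then run an Euler-product/Tauberian argument to handle the weighted sum over $g$. The only real difference is cosmetic: you change variables $g = g_0 g_1$ with $g_0 = \prod_{i>\ell} m_{\alpha_i}$ forced, whereas the paper works directly with the Dirichlet series $G_1(s)$ restricted by the divisibility and non-divisibility conditions on $g$ and then applies the function-field Tauberian theorem (Rosen, Theorem 17.1). These are the same computation written in two equivalent ways; your ``local factor at $m_{\alpha_i}$ in the $g_1$-sum times the peeled-off factor'' reproduces exactly the paper's modified local factor in $G_1(s)$, so the bookkeeping you flag as the ``main obstacle'' does indeed close up. The algebraic identity $(|P|+1)^2(|P|-1)+1 = |P|(|P|^2+|P|-1)$ you invoke is precisely the reason the local factors combine to give $H(1)/\zeta_q(2)$, and the $(1-q^{-u_i})(1+q^{-u_i})=1-q^{-2u_i}$ cancellation matches the paper's final assembly.

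One small inaccuracy: in the residue extraction for the $g_1$-sum you claim an error $O(q^{\varepsilon D})$. The generating function is $\zeta_q(s)/\zeta_q(2s)$ times a correction, so after removing the simple pole at $u=1/q$ the nearest singularity sits at $|u| = q^{-1/2}$ (from $\zeta_q(2s)^{-1}$, equivalently the boundary of convergence of $H(s)$ at $\mathrm{Re}(s)=1/2$), and the correct error is $O(q^{(1/2+\varepsilon)D})$, matching the paper. After multiplying by the Lemma~\ref{lem:gcoprimesqfreeval} prefactor this still lands inside $O(q^{(3/2+\varepsilon)d})$, so the final statement is unaffected, but you should record the correct power of $q$ in that intermediate step.
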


\begin{proof} Denote by $m_{\alpha_i}$ the homogenized minimal polynomial of $\alpha_i$ over $\F_q.$
We have
\[ |\mathcal F_{d}^{\ord}(\alpha_1,\dots, \alpha_n,\beta_1, \dots, \beta_n) | = \sum_{g \in \mathcal{S}_{d}} |A_d^g(\alpha_1,\dots, \alpha_n,\beta_1, \dots, \beta_n)|.\] Assume without loss of generality that
$\beta_1, \dots, \beta_\ell$ are not zero, and $\beta_{\ell+1}= \dots = \beta_n=0$. By Lemma \ref{lem:gcoprimesqfreeval}, the above sum equals
\begin{eqnarray*}
|\mathcal F_{d}^{\ord}(\alpha_1, \dots, \alpha_n, \beta_1, \dots, \beta_n) | =  \sum_{{{g \in \mathcal{S}_{d}} \atop {g(\alpha_i)\neq 0, 1\leq i \leq \ell}} \atop {g(\alpha_i)=0, \ell+1 \leq i \leq n}}
\left(\frac{q^{d+1-\sum_{i=1}^\ell u_i}}{\zeta_q(2) \prod_{i=1}^\ell(1-q^{-2 u_i})} \prod_{(P) \mid (g)} (1 + |P|^{-1})^{-1}+O\left(q^{(1/2 + \varepsilon)d}\right) \right)  \\
 = \frac{q^{d+1-\sum_{i=1}^n u_i}}{\zeta_q(2)\prod_{i=1}^\ell (1-q^{-2 u_i})}
 \sum_{{{g \in \mathcal{S}_{d}} \atop {g(\alpha_i)\neq 0, 1\leq i \leq \ell}} \atop {g(\alpha_i)=0, \ell+1 \leq i \leq n}}
 \prod_{(P) \mid (g)} (1 + |P|^{-1})^{-1}+ O\left(q^{(3/2 + \varepsilon)d}\right). \end{eqnarray*}

Set
\[b(g)=\prod_{(P) \mid (g)} (1 + |P|^{-1})^{-1}\]
and
\[G(s)=\sum_{(g)\not = 0} \frac{b(g)}{|g|^s}.\]

Since $b(g)$ is a multiplicative function, it follows that $G(s)$ has an Euler product of the form
\begin{eqnarray*}
 G(s) &=& \prod_{(P)} \left(  \sum_{k=0}^\infty b(P^k) |P|^{-ks} \right)\\
& =& \prod_{(P)} \left( 1+ \frac{b(P)|P|^{-s}}{1- |P|^{-s}}\right)\\
&  =& \prod_{(P)} \left( 1+ \frac{|P|^{-s}}{(1- |P|^{-s})(1+ |P|^{-1})}\right).
\end{eqnarray*}
Thus \[G(s) = \frac{\zeta_q(s)}{\zeta_q(2s)} H(s),\] where \[H(s)  =\prod_{(P)} \left(1-\frac{|P|^{-s}(1-|P|^{1-s}-|P|^{-s})}{(|P|+1)(1-|P|^{-2s})} \right),\]
which converges for $\mathrm{Re}(s)>1/2.$  In addition, $G(s)$ has a simple pole at $s=1$ with residue
\[\frac{H(1)}{\zeta_q(2)\log q}=\frac{1}{\zeta_q(2)\log q} \prod_{(P)} \left( 1 +\frac{1}{(|P|+1)(|P|^2-1)}\right).\]

Define the additional Dirichlet series
\begin{eqnarray*}
G_1(s) &=&  \sum_{{(m_{\alpha_i}) \nmid (g), 1 \leq i \leq \ell}\atop{(m_{\alpha_i}) \mid (g), \ell+1 \leq i \leq n}}
 \frac{b(g)}{|g|^s} = \prod_{(P)\neq (m_{\alpha_i}), 1\leq i \leq n} \left( 1+ \frac{|P|^{-s}}{(1- |P|^{-s})(1+ |P|^{-1})}\right) \\
 && \times \prod_{(P) = (m_{\alpha_i}), \ell+1\leq i \leq n} \left(\sum_{k=1}^\infty b(P^k)|P|^{-ks}\right)
 \\
&=& G(s)
\prod_{i=1}^n
\left( 1 + \frac{q^{-u_is} }{(1-q^{-u_is})(1+q^{-u_i})}\right)^{-1}\prod_{i=\ell+1}^n \frac{q^{-u_is}}{(1-q^{-u_is})(1+q^{-u_i})}\\
&=& G(s)
\prod_{i=1}^\ell \frac{(1-q^{-u_is})(1+q^{-u_i})}{1+q^{-u_i}-q^{-u_i(s+1)}}\prod_{i=\ell+1}^n \frac{q^{-u_is}}{1+q^{-u_i}-q^{-u_i(s+1)}}.\\
\end{eqnarray*}

Thus, $G_1(s)$ has a simple pole at $s=1$ with residue
\[ \rho = \frac{H(1)}{\zeta_q(2)\log q} \prod_{i=1}^\ell  \frac{1-q^{-2u_i}}{1+q^{-u_i}-q^{-2u_i}}
\prod_{i=\ell+1}^n \frac{q^{-u_i}}{1+q^{-u_i}-q^{-2u_i}}, \]
and $$G_1(s) - \frac{\rho}{s-1}$$ is holomorphic for $\mbox{Re}(s) > 1/2$.
Then, using Theorem 17.1 of \cite{rosen}
which is the function field version of the Wiener--Ikehara Tauberian Theorem, we get that
\begin{eqnarray*}
 \sum_{{(g), g \in \mathcal{S}_{d}}\atop {{(m_{\alpha_i}) \nmid (g), 1 \leq i \leq \ell}\atop{(m_{\alpha_i}) \mid (g), \ell+1 \leq i \leq n}} }
b(g) = \frac{H(1) q^{d+1}}{\zeta_q(2)} \prod_{i=1}^\ell  \frac{1-q^{-2u_i}}{1+q^{-u_i}-q^{-2u_i}}
\prod_{i=\ell+1}^n \frac{q^{-u_i}}{1+q^{-u_i}-q^{-2u_i}} + O \left( q^{(1/2+\varepsilon)d} \right).
\end{eqnarray*}
Using the line above in the formula for $|\mathcal F_{d}^{\ord}(\alpha_1, \dots, \alpha_n, \beta_1, \dots, \beta_n) |$, we get
\begin{eqnarray*}
&&|\mathcal F_{d}^{\ord}(\alpha_1, \dots, \alpha_n, \beta_1, \dots, \beta_n) | \\
&&=
 \frac{q^{d+1-\sum_{i=1}^\ell u_i}}{\zeta_q(2) \prod_{i=1}^\ell
(1-q^{-2 u_i})}
  \frac{H(1)q^{d+1}}{\zeta_q(2)}\prod_{i=1}^\ell  \frac{1-q^{-2u_i}}{1+q^{-u_i}-q^{-2u_i}}
\prod_{i=\ell+1}^n \frac{q^{-u_i}}{1+q^{-u_i}-q^{-2u_i}}+ O\left(q^{(3/2 + \varepsilon)d}\right) \\
&& =\frac{H(1)q^{2d+2-\sum_{i=1}^n u_i}}{\zeta_q(2)^2
{\prod_{i=1}^n (1+q^{-u_i}-q^{-2u_i})}}
+ O\left(q^{(3/2 + \varepsilon)d}\right).
\end{eqnarray*}

\end{proof}

The previous result may be used to obtain the number of covers in the whole ordinary family by specializing to $n=0$.

\begin{cor}\label{prop:sizefamily}
 \[|\mathcal{F}_d^{\ord}| =\frac{H(1)q^{2d+2}}{\zeta_q(2)^2}+ O\left(q^{(3/2 + \varepsilon)d}\right).\]
\end{cor}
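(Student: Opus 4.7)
The plan is to derive this corollary directly by specializing Proposition \ref{prop:frvalue} to the case $n=0$. When we impose no local conditions ($n=0$), the definition of $\mathcal F_d^{\ord}(\alpha_1,\ldots,\alpha_n,\beta_1,\ldots,\beta_n)$ collapses: the condition ``$(\beta_{i,X}h-\beta_{i,Z}g)(\alpha_i)=0$ for $1\le i\le n$'' is vacuous, so the set reduces to $\mathcal F_d^{\ord}$ itself. Thus the two quantities are literally equal, and we just need to evaluate the asymptotic formula of Proposition \ref{prop:frvalue} at $n=0$.

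With $n=0$, the products $\prod_{i=1}^n u_i$ in the exponent and $\prod_{i=1}^n(1+q^{-u_i}-q^{-2u_i})$ in the denominator are empty and therefore equal to $0$ and $1$ respectively. Substituting into the asymptotic
\[
|\mathcal F_d^{\ord}(\alpha_1,\ldots,\alpha_n,\beta_1,\ldots,\beta_n)|=\frac{H(1)q^{2d+2-\sum_{i=1}^n u_i}}{\zeta_q(2)^2\prod_{i=1}^n(1+q^{-u_i}-q^{-2u_i})}+O\bigl(q^{(3/2+\varepsilon)d}\bigr)
\]
yields exactly
\[
|\mathcal F_d^{\ord}|=\frac{H(1)q^{2d+2}}{\zeta_q(2)^2}+O\bigl(q^{(3/2+\varepsilon)d}\bigr),
\]
which is the claim.

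There is no real obstacle here: the entire content of the corollary is packaged inside Proposition \ref{prop:frvalue}, and the work -- the M\"obius inversion, the Tauberian step on $G_1(s)$, and the identification of the residue with $H(1)/(\zeta_q(2)\log q)$ -- has already been done in that proof. One could alternatively redo the computation from scratch starting from $|\mathcal F_d^{\ord}|=\sum_{g\in\mathcal S_d}|A_d^g|$ and using Lemma \ref{lem:gcoprimesqfreeval} with $n=0$, but this merely replays the same argument and offers no simplification, so specialization is the cleaner route.
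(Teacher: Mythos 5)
Your proof is correct and matches the paper's approach exactly: the paper introduces the corollary with the remark that it is obtained "by specializing to $n=0$" in Proposition~\ref{prop:frvalue}, which is precisely what you do, including the observation that the empty sum and empty product evaluate to $0$ and $1$.
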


By combining Proposition \ref{prop:frvalue} and Corollary \ref{prop:sizefamily}, we obtain the following result.
\begin{prop} \label{nalpha} Fix $\alpha_1, \dots, \alpha_n \in \mathbb P^1(\F_{q^k})$ of degrees $u_1, \dots, u_n$ over $\F_q$ such that none of the $\alpha_i$ are conjugate to each other. Let $\beta_i \in \F_{q^{u_i}}$ for $1 \leq i \leq n$.
 Then
\begin{eqnarray*} \frac{|\mathcal{F}_d^{\ord}(\alpha_1, \dots, \alpha_n,\beta_1, \dots, \beta_n)|}{|\mathcal{F}_d^{\ord}|}
&=&
\frac{q^{-\sum_{i=1}^n u_i}}{\prod_{i=1}^n (1+q^{-u_i}-q^{-2u_i})}+ O\left(q^{(-1/2+\varepsilon)d}\right)  \\
&=& {q^{-\sum_{i=1}^n u_i}} \left( 1 + O \left( \sum_{i=1}^n q^{-u_i} \right) \right)+   O\left(q^{(-1/2+\varepsilon)d}\right).
\end{eqnarray*}
\end{prop}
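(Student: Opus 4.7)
The plan is to obtain the stated ratio by dividing the asymptotic formula of Proposition \ref{prop:frvalue} by Corollary \ref{prop:sizefamily}, and then to rewrite the resulting main term in the second, more transparent form using a Taylor-style expansion.

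First I would write
\[
\frac{|\mathcal{F}_d^{\ord}(\alpha_1,\dots,\alpha_n,\beta_1,\dots,\beta_n)|}{|\mathcal{F}_d^{\ord}|}
= \frac{\dfrac{H(1)q^{2d+2-\sum_{i=1}^n u_i}}{\zeta_q(2)^2\prod_{i=1}^n(1+q^{-u_i}-q^{-2u_i})} + O\!\left(q^{(3/2+\varepsilon)d}\right)}{\dfrac{H(1)q^{2d+2}}{\zeta_q(2)^2} + O\!\left(q^{(3/2+\varepsilon)d}\right)}.
\]
Since $H(1)$ is an absolute nonzero constant and $\zeta_q(2)$ is finite, the denominator is of exact order $q^{2d+2}$, so dividing numerator and denominator by $\frac{H(1)q^{2d+2}}{\zeta_q(2)^2}$ turns the denominator into $1+O(q^{(-1/2+\varepsilon)d})$ and gives a numerator of the form
\[
\frac{q^{-\sum_{i=1}^n u_i}}{\prod_{i=1}^n(1+q^{-u_i}-q^{-2u_i})} + O\!\left(q^{(-1/2+\varepsilon)d}\right).
\]
Using the geometric series expansion $(1+O(q^{(-1/2+\varepsilon)d}))^{-1} = 1+O(q^{(-1/2+\varepsilon)d})$ and multiplying out, the first displayed equality of the proposition follows, since the main term is bounded (each factor $1+q^{-u_i}-q^{-2u_i}$ lies between, say, $1/2$ and $2$ for $q\geq 2$), so the errors combine cleanly.

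For the second equality, I would expand
\[
\prod_{i=1}^n\bigl(1+q^{-u_i}-q^{-2u_i}\bigr)^{-1} = 1 + O\!\left(\sum_{i=1}^n q^{-u_i}\right),
\]
which follows from $(1+x_i)^{-1} = 1 - x_i + O(x_i^2)$ with $x_i = q^{-u_i}-q^{-2u_i} = O(q^{-u_i})$, together with the elementary bound $\prod(1+|x_i|) - 1 = O(\sum |x_i|)$ when $\sum |x_i|$ is bounded (which holds since the $u_i$ are distinct degrees of non-conjugate points and $\sum q^{-u_i}$ is controlled by a geometric sum). Multiplying through by $q^{-\sum u_i}$ yields the second displayed form.

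The main (minor) obstacle is simply bookkeeping: one must verify that the implied constant in the error $O(q^{(-1/2+\varepsilon)d})$ does not blow up with $n$ or with the $u_i$. This is fine because (i) the constants $H(1)$ and $\zeta_q(2)$ are absolute, (ii) the main term $q^{-\sum u_i}/\prod(1+q^{-u_i}-q^{-2u_i})$ is uniformly bounded above by $1$, and (iii) in all applications $n$ and the $u_i$ remain bounded (or at worst $u_i\leq k$) while $d\to\infty$, so the $q^{(3/2+\varepsilon)d}$ error genuinely dominates any dependence on the local data.
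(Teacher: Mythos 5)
Your proof is correct and follows exactly the route the paper takes: the paper simply states ``By combining Proposition \ref{prop:frvalue} and Corollary \ref{prop:sizefamily}, we obtain the following result,'' and your division of the two asymptotics, followed by the Taylor expansion of $\prod_i(1+q^{-u_i}-q^{-2u_i})^{-1}$, is a faithful (and more detailed) version of that one-line argument. One small slip in the justification of the second equality: the $u_i$ need not be \emph{distinct} (two non-conjugate points can share the same degree, e.g.\ distinct $\F_q$-rational points), so ``controlled by a geometric sum'' is not quite the right reason $\sum_i q^{-u_i}$ stays bounded; what one actually uses is that the $\alpha_i$ are a bounded number of pairwise non-conjugate points in $\PP^1(\F_{q^k})$, which keeps $\sum_i q^{-u_i}$ uniformly bounded and makes the expansion $\prod_i(1+x_i)^{-1}=1+O(\sum_i x_i)$ valid. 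This does not affect the conclusion.
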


We finish this section by computing the expected number of points in an ordinary Artin-Schreier cover. For this, we need to compute the case $n=1$,
 i.e., $|\mathcal F_d^{\ord}(\alpha, \beta)|$.

\begin{cor} \label{1alpha}
 Fix $\alpha \in \PP^1(\F_{q^k})$ of degree $u$ over $\F_q$. Let $\beta \in \PP^1(\F_{q^u})$. Then
\[|\mathcal F_{d}^{\ord}(\alpha, \beta) | = \begin{cases}
\frac{H(1)q^{2d+2-u}(1-q^{-u})}{\zeta_q(2)^2(1+q^{-u}-q^{-2u})} +  O\left(q^{(3/2+\varepsilon)d+u}\right) & \beta=\infty,\\\\
\frac{H(1)q^{2d+2-u}}{\zeta_q(2)^2(1+q^{-u}-q^{-2u})} + O\left(q^{(3/2+\varepsilon)d}\right) & \beta \in  \F_{q^u}.
\end{cases}\]
\end{cor}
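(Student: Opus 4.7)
The case $\beta\in \F_{q^u}$ is immediate: it is just Proposition \ref{prop:frvalue} specialized to $n=1$, $u_1 = u$, $\beta_1 = \beta$, which directly yields the second formula with error $O\!\left(q^{(3/2+\varepsilon)d}\right)$.

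The case $\beta = \infty$ I would handle by a complementary count. Since every pair $(g,h) \in \mathcal F_d^{\ord}$ satisfies $(g,h)=1$, the rational function $g/h$ has a well-defined value in $\PP^1(\F_{q^u})$ at the point $\alpha$, so we have a disjoint decomposition
\[
\mathcal F_d^{\ord} = \mathcal F_d^{\ord}(\alpha,\infty) \; \sqcup \; \bigsqcup_{\gamma \in \F_{q^u}} \mathcal F_d^{\ord}(\alpha,\gamma).
\]
Hence
\[
|\mathcal F_d^{\ord}(\alpha,\infty)| = |\mathcal F_d^{\ord}| - \sum_{\gamma \in \F_{q^u}} |\mathcal F_d^{\ord}(\alpha,\gamma)|.
\]
I would now insert Corollary \ref{prop:sizefamily} for $|\mathcal F_d^{\ord}|$ and the already-established first case of the present corollary for each of the $q^u$ terms $|\mathcal F_d^{\ord}(\alpha,\gamma)|$ (which are in fact all equal by Lemma \ref{beta=0}, although we only need the asymptotic). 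This gives
\[
|\mathcal F_d^{\ord}(\alpha,\infty)| = \frac{H(1)q^{2d+2}}{\zeta_q(2)^2}\left(1 - \frac{q^{u}\cdot q^{-u}}{1+q^{-u}-q^{-2u}}\right) + O\!\left(q^{(3/2+\varepsilon)d+u}\right),
\]
where the extra factor $q^u$ in the error bound accounts for summing $q^u$ many $O\!\left(q^{(3/2+\varepsilon)d}\right)$ terms.

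The final step is the short arithmetic simplification
\[
1 - \frac{1}{1+q^{-u}-q^{-2u}} = \frac{q^{-u}-q^{-2u}}{1+q^{-u}-q^{-2u}} = \frac{q^{-u}(1-q^{-u})}{1+q^{-u}-q^{-2u}},
\]
which produces exactly the claimed leading term $\dfrac{H(1)q^{2d+2-u}(1-q^{-u})}{\zeta_q(2)^2(1+q^{-u}-q^{-2u})}$. I do not see a genuine obstacle in this argument; the only point that needs care is the error term bookkeeping, specifically tracking the $q^u$ factor that arises from summing over $\gamma \in \F_{q^u}$, which is exactly what is reflected in the $O\!\left(q^{(3/2+\varepsilon)d+u}\right)$ appearing in the statement for the $\beta=\infty$ case (and which is absent in the $\beta \in \F_{q^u}$ case, as expected).
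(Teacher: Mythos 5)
Your proposal is correct and follows essentially the same route as the paper: Proposition \ref{prop:frvalue} with $n=1$ handles $\beta\in\F_{q^u}$, and the $\beta=\infty$ case is obtained by the complementary count $|\mathcal F_d^{\ord}(\alpha,\infty)| = |\mathcal F_d^{\ord}| - \sum_{\gamma\in\F_{q^u}} |\mathcal F_d^{\ord}(\alpha,\gamma)|$, with the $q^u$ factor in the error coming from the sum over $\gamma$. The only cosmetic difference is that the paper invokes Lemma \ref{beta=0} to collapse the sum to $q^u\,|\mathcal F_d^{\ord}(\alpha,0)|$ before applying the asymptotic, whereas you sum the asymptotics directly; both are equivalent.
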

\begin{proof} The case of $\beta \in  \F_{q^u}$ is a simple consequence of Proposition \ref{prop:frvalue}. For $\beta=[1:0]$, we have, by Lemma \ref{beta=0}, that
\begin{eqnarray*}
|\mathcal F_d^{\ord}(\alpha, \infty)|&=&|\mathcal F_d^{\ord}|-\sum_{\beta \in  \F_{q^u}}|\mathcal F_d^{\ord}(\alpha, \beta)|\\
&=&|\mathcal F_d^{\ord}|-q^u|\mathcal F_d^{\ord}(\alpha,0)|\\
&=&\frac{H(1)q^{2d+2-u}(1-q^{-u})}{\zeta_q(2)^2(1+q^{-u}-q^{-2u})} + O\left(q^{(3/2+\varepsilon)d+u}\right).
 \end{eqnarray*}

\end{proof}

By combining Proposition \ref{nalpha} and Corollaries \ref{prop:sizefamily} and \ref{1alpha}, we obtain the following result.

\begin{prop} \label{prop:5.6/4.3} Fix $\alpha \in \PP^1(\F_{q^k})$ with degree $u$ over $\F_q$. Let $\beta \in \PP^1(\F_{q^u})$.  Then
\[\frac{|\mathcal{F}_d^{\ord}(\alpha,\beta)|}{|\mathcal{F}_d^{\ord}|}=\begin{cases}
\frac{q^{-u}(1-q^{-u})}{1+q^{-u}-q^{-2u}} +  O\left(q^{(-1/2+\varepsilon)d+u}\right) & \beta=\infty,\\\\
\frac{q^{-u}}{1+q^{-u}-q^{-2u}} + O\left(q^{(-1/2+\varepsilon)d}\right) & \beta \in  \F_{q^u}.
\end{cases}\]
\end{prop}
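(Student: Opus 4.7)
The plan is to derive this proposition as a direct corollary of the three preceding results: Proposition \ref{nalpha}, Corollary \ref{prop:sizefamily}, and Corollary \ref{1alpha}. The two cases of the statement, $\beta \in \F_{q^u}$ and $\beta = \infty$, will be handled separately, each by a short division of asymptotic formulas.

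For the case $\beta \in \F_{q^u}$, I would specialize Proposition \ref{nalpha} to $n=1$ with $\alpha_1 = \alpha$ and $\beta_1 = \beta$. The single-factor product on the right-hand side collapses to $\frac{q^{-u}}{1+q^{-u}-q^{-2u}}$, and the error term $O(q^{(-1/2+\varepsilon)d})$ matches exactly what is claimed in the second line of the proposition, so this case is immediate.

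For the case $\beta = \infty$, the ratio is not given directly by Proposition \ref{nalpha}, so I would form the quotient of the two asymptotics provided by Corollary \ref{1alpha} and Corollary \ref{prop:sizefamily}. Explicitly, the numerator equals $\frac{H(1)q^{2d+2-u}(1-q^{-u})}{\zeta_q(2)^2(1+q^{-u}-q^{-2u})} + O\!\left(q^{(3/2+\varepsilon)d+u}\right)$, while the denominator equals $\frac{H(1)q^{2d+2}}{\zeta_q(2)^2} + O\!\left(q^{(3/2+\varepsilon)d}\right)$. Writing the denominator as $M(1 + O(q^{(-1/2+\varepsilon)d}))$ for its main term $M$ and expanding $(1+x)^{-1} = 1 - x + O(x^2)$, the main terms divide to yield the stated $\frac{q^{-u}(1-q^{-u})}{1+q^{-u}-q^{-2u}}$, and the error in the numerator divided by $M \asymp q^{2d+2}$ contributes $O(q^{(-1/2+\varepsilon)d+u})$, which dominates the error coming from expanding the denominator.

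There is no real obstacle here; the entire analytic work has been done in the previous lemmas and their Tauberian input, and the current statement is a bookkeeping exercise. The only point requiring care is the extra $q^u$ factor appearing in the error term for $\beta = \infty$: it traces back to the proof of Corollary \ref{1alpha}, where $|\mathcal F_d^{\ord}(\alpha,\infty)|$ was obtained by subtracting the $q^u$ quantities $|\mathcal F_d^{\ord}(\alpha,\beta)|$ with $\beta \in \F_{q^u}$ from $|\mathcal F_d^{\ord}|$, and this accumulation of errors must be tracked consistently through the final division.
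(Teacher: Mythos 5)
Your proposal is correct and follows the same route the paper takes: the paper proves Proposition~\ref{prop:5.6/4.3} precisely by "combining Proposition~\ref{nalpha} and Corollaries~\ref{prop:sizefamily} and~\ref{1alpha}," leaving the bookkeeping implicit, and you have simply spelled out that division, correctly tracking the extra $q^u$ in the $\beta=\infty$ error term back to its origin in Corollary~\ref{1alpha}.
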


\begin{lem} \label{lem:average}  Fix $\alpha \in \PP^1(\F_{q^k})$ of degree $u$ over $\F_q$. The expected number of  $\F_{q^k}$-points in the fiber above $\alpha$ is
\[\begin{cases} 1 +  O\left(q^{(-1/2+\varepsilon)d+u}\right) & \textrm{ if } p\nmid \frac{k}{u}, \\
&\\
1
+\frac{p-1}{1+q^{-u}-q^{-2u}} +  O\left(q^{(-1/2+\varepsilon)d+u}\right)& \textrm{ if }  p\mid \frac{k}{u}. \\

\end{cases}\]
\end{lem}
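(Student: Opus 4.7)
The plan is to assemble this directly from Lemma \ref{expectednumberfiber} (which converts fiber counts into conditions on the value $f(\alpha) = g(\alpha)/h(\alpha)$) together with the density estimates in Proposition \ref{prop:5.6/4.3}. Since $\alpha$ has degree $u$ over $\F_q$ and $f \in \F_q(X)$, the value $f(\alpha)$ lies in $\PP^1(\F_{q^u})$; hence the two propositions together already parametrize everything we need, and the proof reduces to a bookkeeping exercise.

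First I would write the expected number of $\F_{q^k}$-fiber points above $\alpha$, averaged over $\mathcal F_d^{\ord}$, as
\[
\frac{|\mathcal F_d^{\ord}(\alpha, \infty)|}{|\mathcal F_d^{\ord}|} \;+\; p \sum_{\substack{\beta \in \F_{q^u} \\ \tr_k \beta = 0}} \frac{|\mathcal F_d^{\ord}(\alpha, \beta)|}{|\mathcal F_d^{\ord}|},
\]
using Lemma \ref{expectednumberfiber} to weight each value of $f(\alpha)$ by $1$, $p$, or $0$.

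Next I would pin down the set of $\beta \in \F_{q^u}$ with $\tr_k \beta = 0$. By transitivity of trace along $\F_p \subset \F_{q^u} \subset \F_{q^k}$, for $\beta \in \F_{q^u}$ we have $\tr_k \beta = (k/u)\,\tr_u \beta$ in $\F_p$. So if $p \mid k/u$ the condition is vacuous and contributes all $q^u$ values of $\beta$, while if $p \nmid k/u$ it reduces to $\tr_u \beta = 0$, contributing exactly $q^u/p$ values (using surjectivity of the absolute trace onto $\F_p$). This is the single place where the dichotomy in the statement appears.

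Then I would substitute the estimates of Proposition \ref{prop:5.6/4.3}. In the case $p \nmid k/u$ the main terms collapse by the identity
\[
\frac{q^{-u}(1-q^{-u})}{1+q^{-u}-q^{-2u}} + p \cdot \frac{q^{u}}{p} \cdot \frac{q^{-u}}{1+q^{-u}-q^{-2u}} \;=\; \frac{1 + q^{-u} - q^{-2u}}{1+q^{-u}-q^{-2u}} \;=\; 1,
\]
and in the case $p \mid k/u$ the same manipulation with the factor $q^u$ replacing $q^u/p$ yields the extra $(p-1)/(1+q^{-u}-q^{-2u})$. For the error, the $\beta = \infty$ term already contributes $O(q^{(-1/2+\varepsilon)d+u})$, and summing the $O(q^{(-1/2+\varepsilon)d})$ error from Proposition \ref{prop:5.6/4.3} over the at most $q^u$ finite values of $\beta$, with $p$ treated as a constant, gives $O(q^{(-1/2+\varepsilon)d+u})$ as claimed.

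There is no real obstacle here: every input is an earlier result in this section, and the only subtle point is the trace-transitivity identity that creates the two cases. The proof is a short explicit computation; I would present it with the two cases side by side.
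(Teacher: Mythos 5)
Your proof is correct and follows essentially the same route as the paper: combine Lemma \ref{expectednumberfiber} with Proposition \ref{prop:5.6/4.3} to weight the densities of $f(\alpha)=\infty$ and $f(\alpha)=\beta$, use trace transitivity $\tr_k\beta=(k/u)\tr_u\beta$ to split into the two cases, and then do the small algebraic simplification and error bookkeeping. The only difference is that you spell out the simplification of the main term slightly more explicitly than the paper does.
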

\begin{proof}
 By Lemma \ref{expectednumberfiber} and Proposition \ref{prop:5.6/4.3}, the expected number of $\F_{q^k}$-points in the fiber above $\alpha$ is
 \[\frac{q^{-u}(1-q^{-u})}{1+q^{-u}-q^{-2u}} + O\left(q^{(-1/2+\varepsilon)d+u}\right)\\
+\sum_{\beta \in \F_{q^u}, \tr_k(\beta)=0}p\left(\frac{q^{-u}}{1+q^{-u}-q^{-2u}} + O\left(q^{(-1/2+\varepsilon)d}\right) \right).\]

If $p\nmid \frac{k}{u}$, then $\tr_k(\beta)=0$ iff $\tr_u(\beta)=0$ and there are $\frac{q^u}{p}$ points in $\F_{q^u}$ with that property.

If $p \mid \frac{k}{u}$, then $\tr_k(\beta) = \frac{k}{u} \tr_u(\beta) = 0$ for all $\beta \in \F_{q^u}$  and therefore the expected number of points in the fiber is

  \[\frac{q^{-u}(1-q^{-u})}{1+q^{-u}-q^{-2u}} +  O\left(q^{(-1/2+\varepsilon)d+u}\right)\\
+\frac{p}{1+q^{-u}-q^{-2u}} +  O\left(q^{(-1/2+\varepsilon)d+u}\right).\]

\end{proof}

For our main result, we recall that an ordinary Artin-Schreier cover has $r+1$ simple poles. This corresponds to taking $d=r+1$. We are ready to prove the first part of Theorem
\ref{thmnumberofpoints}.

\begin{thm} \label{expected-ord}
The expected number of $\F_{q^k}$-points on an ordinary Artin-Schreier cover defined over $\F_q$ is
\[\begin{cases} q^{k} +1 + O\left(q^{(-1/2+\varepsilon)(r+1)+2k}\right)& p \nmid k, \\\\
 q^{k} +1 +\frac{p-1}{1+q^{-1}-q^{-2}} +\sum_{u\mid \frac{k}{p}} \frac{p-1}{1+q^{-u}-q^{-2u}} \pi(u)u+  O\left(q^{(-1/2+\varepsilon)(r+1)+2k}\right)& p \mid k,\end{cases}\]
where $\pi(u)$ is the number of monic irreducible polynomials in $\F_q[X]$ of degree $u$.
 \end{thm}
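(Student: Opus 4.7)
The plan is to use linearity of expectation: the average number of $\F_{q^k}$-points on $C_f$ is the sum over $\alpha \in \PP^1(\F_{q^k})$ of the average number of points in the fiber above $\alpha$. Since Lemma \ref{lem:average} already computes the latter, all that remains is to group the points $\alpha$ by their degree $u$ over $\F_q$, count them, and split into the two cases $p\nmid k$ and $p\mid k$.

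For the grouping, I would recall that if $\alpha\in\PP^1(\F_{q^k})$ has degree $u$ over $\F_q$, then $u\mid k$. The number of degree-$u$ points in $\mathbb{A}^1(\F_{q^k})$ is $u\,\pi(u)$ (each monic irreducible of degree $u$ contributes $u$ roots), and the point $\infty$ contributes one further degree-$1$ point. In the case $p\nmid k$, we automatically have $p\nmid k/u$ for every $u\mid k$, so Lemma \ref{lem:average} gives contribution $1$ to the main term from each $\alpha$, yielding a main term of $q^k+1$. In the case $p\mid k$, the extra summand $\tfrac{p-1}{1+q^{-u}-q^{-2u}}$ appears precisely when $p\mid k/u$, i.e.\ when $u\mid k/p$; splitting off the contribution of $\infty$ (which is a degree-$1$ point not counted by $\pi(1)$) produces the isolated term $\tfrac{p-1}{1+q^{-1}-q^{-2}}$, while the affine degree-$u$ points contribute the stated sum $\sum_{u\mid k/p} \tfrac{p-1}{1+q^{-u}-q^{-2u}}\,\pi(u)\,u$.

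For the error term, the per-fiber bound in Lemma \ref{lem:average} is $O(q^{(-1/2+\varepsilon)d+u})$ for a point of degree $u$. Multiplying by the at most $u\pi(u)+1 = O(q^u)$ points of degree $u$ gives $O(q^{(-1/2+\varepsilon)d+2u})$ for each $u\mid k$; summing the finite geometric-like series over divisors $u\le k$ is dominated by the $u=k$ term and yields $O(q^{(-1/2+\varepsilon)d+2k})$. Since an ordinary Artin-Schreier cover has $r+1$ simple poles, we set $d=r+1$, which matches the error exponent in the statement.

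No genuine obstacle arises here: Lemma \ref{lem:average} does all the real work, so the proof is essentially bookkeeping. The only mildly delicate point is the careful accounting for the point at infinity in the $p\mid k$ case (making sure it is separated from the $u=1$ term of $\pi(u)u$), and the verification that summing the error bounds over divisors of $k$ does not produce any additional factor beyond what the $u=k$ term already contributes.
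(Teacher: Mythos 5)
Your proposal is correct and follows essentially the same route as the paper: sum the per-fiber expectation from Lemma \ref{lem:average} over all $\alpha\in\PP^1(\F_{q^k})$ grouped by degree $u\mid k$, separate the contribution of $\infty$ from the $\pi(u)u$ affine degree-$u$ points, and note that the extra summand arises exactly when $u\mid k/p$. The only difference is that you spell out the error accumulation (multiplying the per-fiber $O(q^{(-1/2+\varepsilon)d+u})$ by the $O(q^u)$ points of degree $u$ and observing the $u=k$ term dominates), which the paper leaves implicit.
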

\begin{proof}
 If $p \nmid k$, the result follows by adding  the result of Lemma \ref{lem:average} over all $\alpha \in \PP^1(\F_{q^k})$. 
 If $p\mid k$ we still get the term $q^k+1$ and an additional term given by
\begin{align*}
 \sum_{u\mid \frac{k}{p}} \sum_{\alpha, \deg \alpha=u} \frac{p-1}{1+q^{-u}-q^{-2u}} & = \frac{p-1}{1+q^{-1}-q^{-2}}+
\sum_{u\mid \frac{k}{p}} \frac{p-1}{1+q^{-u}-q^{-2u}} \pi(u)u,
\end{align*}
where the first term on the right hand side accounts for the case $\alpha=\infty$.
\end{proof}

\begin{rem}
 When $k=p$, we obtain
\[q^{p} +1 +\frac{(p-1)(q+1)}{1+q^{-1}-q^{-2}} +  O\left(q^{(-1/2+\varepsilon)(r+1)+2p}\right).\]
\end{rem}

\section{Full Space}
In this case, we consider the family
\[
\mathcal F_d^{\full}= \left\{ (g(X,Z),h(X,Z)) : g(X,Z), h(X,Z)\in \mathcal{S}_{d}, (g,h)=1 \right\}.
\]

\begin{prop}\label{full:nalpha} Fix $\alpha_1, \dots, \alpha_n \in \mathbb P^1(\F_{q^k})$ of degrees $u_1, \dots, u_n$ such that none of the $\alpha_i$ are conjugate to each other. Let $\beta_i \in \F_{q^{u_i}}$ for $1 \leq i \leq n$.
 Then we have
\[ |\mathcal  F_d^{{\full}}(\alpha_1, \dots, \alpha_n,\beta_1, \dots, \beta_n)|=\frac{q^{2d+2-\sum_{i=1}^n u_i}}{\zeta_q(2)\prod_{i=1}^n\left(1+q^{-u_i}\right)}+O\left(q^{(1+\varepsilon)d}\right).\]
\end{prop}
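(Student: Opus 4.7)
The plan is to follow the template of the proof of Proposition \ref{prop:frvalue}, exploiting the simplification that arises from dropping the squarefree condition on $h$. Without loss of generality, I would assume $\beta_1, \dots, \beta_\ell$ are nonzero and $\beta_{\ell+1} = \cdots = \beta_n = 0$, and decompose
\[
|\mathcal{F}_d^{\full}(\alpha_1, \dots, \alpha_n, \beta_1, \dots, \beta_n)| = \sum_{g \in \mathcal{S}_d} \left|\left\{h \in \mathcal{S}_d : (g,h)=1,\ (\beta_{i,X}h - \beta_{i,Z}g)(\alpha_i) = 0\right\}\right|.
\]
By Lemma \ref{the-one-before}, the inner cardinality vanishes unless $g(\alpha_i) \ne 0$ for $1 \le i \le \ell$ and $g(\alpha_i) = 0$ for $\ell+1 \le i \le n$, in which case it equals $q^{d+1-\sum_{i=1}^\ell u_i}\, c(g) + O(q^{\varepsilon d})$, with $c(g) := \prod_{(P)\mid(g)}(1 - |P|^{-1})$. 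The $O(q^{\varepsilon d})$ error, summed trivially over the at most $q^{d+1}$ admissible $g$, contributes $O(q^{(1+\varepsilon)d})$ to the count.

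The remaining main sum $T := \sum_g c(g)$ (with the sign conditions on the values $g(\alpha_i)$) is handled by Dirichlet series. Since $c$ is multiplicative with $\sum_{k\ge 1} c(P^k)|P|^{-ks} = (1-|P|^{-1})|P|^{-s}/(1-|P|^{-s})$, the associated series admits the dramatic simplification
\[
G(s) = \sum_{(g)\ne 0} \frac{c(g)}{|g|^s} = \prod_{(P)} \frac{1 - |P|^{-s-1}}{1 - |P|^{-s}} = \frac{\zeta_q(s)}{\zeta_q(s+1)},
\]
compared to the function $H(s)$ appearing in the ordinary case. The restricted series $G_1(s)$ (with $m_{\alpha_i} \nmid g$ for $i \le \ell$ and $m_{\alpha_i} \mid g$ for $i > \ell$) is obtained from $G(s)$ by swapping the Euler factor at each $m_{\alpha_i}$ for either $1$ or $\sum_{k\ge1} c(m_{\alpha_i}^k)|m_{\alpha_i}|^{-ks}$, and a short bookkeeping computation gives a simple pole at $s=1$ with residue
\[
\rho = \frac{1}{\zeta_q(2)\log q} \prod_{i=1}^\ell \frac{1}{1+q^{-u_i}} \prod_{i=\ell+1}^n \frac{q^{-u_i}}{1+q^{-u_i}}.
\]

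Applying the function-field Wiener--Ikehara theorem (Theorem 17.1 of \cite{rosen}) exactly as in Proposition \ref{prop:frvalue} yields $T = \rho \log q \cdot q^{d+1} + (\text{small error})$. Since $G_1$ is a \emph{rational} function in $q^{-s}$ whose only singularity of modulus $\le 1$ in $u=q^{-s}$ is the pole at $u=1/q$ (the remaining singularities come from the finite modifications and lie on $|u|=q$), the Tauberian error from this step is much smaller than $q^{(1+\varepsilon)d}$ and can be absorbed into it. Multiplying by the prefactor $q^{d+1-\sum_{i \le \ell}u_i}$ combines with the factor $\prod_{i>\ell}q^{-u_i}$ in $\rho$ to produce the exponent $q^{2d+2-\sum_{i=1}^n u_i}$ and the denominator $\zeta_q(2)\prod_{i=1}^n(1+q^{-u_i})$ exactly as stated, and the overall error is $O(q^{(1+\varepsilon)d})$ coming from the elementary step. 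The only place requiring care is to verify that the modification of Euler factors at the various $m_{\alpha_i}$ telescopes to give precisely the simple denominators $1+q^{-u_i}$; this is routine but is the one spot where the arithmetic bookkeeping has to be done explicitly.
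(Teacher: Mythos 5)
Your proposal is correct and follows essentially the same approach as the paper's own proof: decompose over $g$, apply Lemma \ref{the-one-before}, compute the Dirichlet series $G(s)=\zeta_q(s)/\zeta_q(s+1)$ and its modified version $G_1(s)$, extract the residue at $s=1$, and finish with the function-field Tauberian theorem. The only cosmetic difference is notation ($c(g)$ for the paper's $b(g)$), and your extra remark about $G_1$ being rational in $q^{-s}$ with all other singularities on $|q^{-s}|=q$ is a correct and slightly more explicit justification of the Tauberian error bound than what the paper spells out.
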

\begin{proof} Assume without loss of generality that
$\beta_1, \dots, \beta_\ell$ are not zero, and $\beta_{\ell+1}= \dots = \beta_n=0$. We have, by Lemma \ref{the-one-before}, that
\begin{eqnarray*}
|\mathcal  F_d^{\full}(\alpha_1, \dots, \alpha_n,\beta_1, \dots, \beta_n)|&=&\sum_{g \in \mathcal{S}_{d}} \left|\left\{h \in \mathcal{S}_{d}: (h,g)=1, \frac{g(\alpha_i)}{h(\alpha_i)}=\beta_i, 1\leq i \leq n\right\}\right|\\
&=& \sum_{g \in \mathcal{S}_d} q^{d+1-\sum_{i=1}^\ell u_i} \prod_{(P) | (g)} (1-|P|^{-1}) + O\left(q^{(1+\varepsilon) d}\right).
\end{eqnarray*}

We set
\[b(g)=\prod_{(P) \mid (g)} (1 - |P|^{-1}),\]
and
\[G(s)=\sum_{(g)\not = 0} \frac{b(g)}{|g|^s}.\]
Since $b(g)$ is a multiplicative function, it follows that $G(s)$ has an Euler product of the form
\begin{eqnarray*}
 G(s)&=&\prod_{(P)}\left(\sum_{k=0}^\infty b(P^k)|P|^{-ks}\right)
=\prod_{(P)}\left(1+\frac{b(P)|P|^{-s}}{1-|P|^{-s}}\right)\\
&=&\prod_{(P)}\left(1+\frac{(1-|P|^{-1})|P|^{-s}}{1-|P|^{-s}}\right)
=\prod_{(P)}\left(\frac{1-|P|^{-1-s}}{1-|P|^{-s}}\right).
\end{eqnarray*}
Therefore
\[G(s)=\frac{\zeta_q(s)}{\zeta_q(1+s)},\]
is analytic for $\mathrm{Re}(s)>0$, except for a simple pole at $s=1$ with residue $\frac{1}{\zeta_q(2)\log q }$.

Now define the Dirichlet series
\begin{eqnarray*}
G_1(s) &=&  \sum_{{(m_{\alpha_i}) \nmid (g), 1 \leq i \leq \ell}\atop{(m_{\alpha_i}) \mid (g), \ell+1 \leq i \leq n}}
 \frac{b(g)}{|g|^s} = \prod_{(P)\neq ( m_{\alpha_i}), 1\leq i \leq n} \left(\frac{1-|P|^{-1-s}}{1-|P|^{-s}}\right) \\
 &&\times  \prod_{(P) = (m_{\alpha_i}), \ell+1\leq i \leq n} \left(\sum_{k=1}^\infty b(P^k)|P|^{-ks}\right)\\
 \\
&=&G(s)
\prod_{i=1}^n  \left(\frac{1-q^{-u_i(1+s)}}{1-q^{-u_is}}\right)^{-1} \prod_{i=\ell+1}^n  \left(\frac{q^{-u_is}(1-q^{-u_i})}{1-q^{-u_is}} \right)
\\
&=&G(s)
\prod_{i=1}^\ell \frac{1-q^{-u_is}}{1-q^{-u_i(1+s)}} \prod_{i=\ell+1}^n  \frac{q^{-u_is}(1-q^{-u_i})}{1-q^{-u_i(1+s)}}.
\\
\end{eqnarray*}

Thus $G_1(s)$  is analytic for $\mathrm{Re}(s)>0$, except for a simple pole at $s=1$ with residue
\[\frac{1}{\zeta_q(2)\log q}\prod_{i=1}^\ell\frac{1}{1+q^{-u_i}}\prod_{i=\ell+1}^n  \frac{q^{-u_i}}{1+q^{-u_i}}. \]
Then, using again Theorem 17.1 of \cite{rosen}, we get that
\begin{eqnarray*}
|\mathcal  F_d^{\full}(\alpha_1, \dots, \alpha_n,\beta_1, \dots, \beta_n)|&=&  q^{d+1-\sum_{i=1}^\ell u_i} \sum_{{(g), g \in \mathcal{S}_{d}}\atop{{(m_{\alpha_i}) \nmid (g), 1 \leq i \leq \ell}\atop{(m_{\alpha_i}) \mid (g), \ell+1 \leq i \leq n}}} b(g) + O\left(q^{(1+\varepsilon)d}\right)\\
&=&  \frac{q^{2d+2-\sum_{i=1}^\ell u_i}}{\zeta_q(2)}\prod_{i=1}^\ell\left(\frac{1}{1+q^{-u_i}}\right)\prod_{i=\ell+1}^n  \left(\frac{q^{-u_i}}{1+q^{-u_i}} \right) + O\left(q^{(1+\varepsilon)d}\right).\\
\end{eqnarray*}

\end{proof}

We may now proceed to compute the number of covers in the whole family by setting $n=0$ in the previous result.
\begin{cor}\label{full}
\[ |\mathcal  F_d^{\full}|=\frac{q^{2d+2}}{\zeta_q(2)}+O\left(q^{(1+\varepsilon)d}\right).\]
\end{cor}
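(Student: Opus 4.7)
The plan is to obtain Corollary~\ref{full} as a direct specialization of Proposition~\ref{full:nalpha} to the case $n=0$. Concretely, I would verify three simple facts about what happens when no local conditions are imposed. First, by the definition preceding Proposition~\ref{full:nalpha}, the set $\mathcal{F}_d^{\full}(\alpha_1,\dots,\alpha_n,\beta_1,\dots,\beta_n)$ with $n=0$ is just $\mathcal{F}_d^{\full}$. Second, the exponent $2d+2-\sum_{i=1}^{n}u_i$ collapses to $2d+2$ since the sum is empty. Third, the product $\prod_{i=1}^{n}(1+q^{-u_i})$ in the denominator of the main term is the empty product, hence equal to $1$. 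The error term $O(q^{(1+\varepsilon)d})$ in Proposition~\ref{full:nalpha} is independent of $n$. Substituting these values into the formula of Proposition~\ref{full:nalpha} gives exactly
\[
|\mathcal{F}_d^{\full}| = \frac{q^{2d+2}}{\zeta_q(2)} + O\!\left(q^{(1+\varepsilon)d}\right).
\]

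If instead one wanted a self-contained derivation, the route would be to run the Dirichlet series computation from the proof of Proposition~\ref{full:nalpha} directly, without any points or values to control. Write $|\mathcal{F}_d^{\full}| = \sum_{g \in \mathcal{S}_d}\bigl|\{h \in \mathcal{S}_d : (h,g)=1\}\bigr|$, apply M\"obius inversion on divisors of $g$ to obtain the main term $q^{d+1}\prod_{(P)\mid (g)}(1-|P|^{-1})$, and then sum the resulting multiplicative function $b(g)=\prod_{(P)\mid (g)}(1-|P|^{-1})$ over $g \in \mathcal{S}_d$ via the generating series
\[
G(s)=\sum_{(g)\neq 0}\frac{b(g)}{|g|^s}=\frac{\zeta_q(s)}{\zeta_q(1+s)},
\]
which is analytic for $\mathrm{Re}(s)>0$ except for a simple pole at $s=1$ with residue $1/(\zeta_q(2)\log q)$. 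An application of the Wiener--Ikehara Tauberian theorem (Theorem~17.1 of~\cite{rosen}) then produces the same main term and error bound.

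There is no real obstacle in this corollary: the essential analytic input, namely the factorization $G(s)=\zeta_q(s)/\zeta_q(1+s)$ and the extraction of its residue at $s=1$, has already been handled in Proposition~\ref{full:nalpha}, and the only genuine bookkeeping step is checking that empty sums and empty products specialize correctly.
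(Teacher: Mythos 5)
Your proof is correct and matches the paper's own approach: the paper likewise obtains this corollary by setting $n=0$ in Proposition~\ref{full:nalpha}, with the empty sum and empty product giving the stated main term. The alternative self-contained route you sketch is also sound but unnecessary, since the specialization is exactly what the paper intends.
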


By combining Proposition \ref{full:nalpha} and Corollary \ref{full}, we obtain the following result.
\begin{prop}\label{fullalphas} Fix $\alpha_1, \dots, \alpha_n \in \mathbb P^1(\F_{q^k})$ of degrees $u_1, \dots, u_n$ such that none of the $\alpha_i$ are conjugate to each other. Let $\beta_i \in \F_{q^{u_i}}$ for $1 \leq i \leq n$.
 Then we have
\begin{eqnarray*}\frac{|\mathcal F_d^{\full}(\alpha_1, \dots, \alpha_n,\beta_1, \dots, \beta_n)|}{|\mathcal F_d^{\full}|}&=& \frac{q^{-\sum_{i=1}^n u_i}}{\prod_{i=1}^n\left({1+q^{-u_i}}\right)}+O\left(q^{(\varepsilon-1)d}\right)\\
&=& q^{-\sum_{i=1}^n u_i}\left(1+O \left(\sum_{i=1}^nq^{- u_i}\right)\right) +O\left(q^{(\varepsilon-1)d}\right).
 \end{eqnarray*}

\end{prop}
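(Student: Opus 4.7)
The plan is to obtain this as an immediate consequence of dividing the asymptotic in Proposition \ref{full:nalpha} by the asymptotic in Corollary \ref{full}. First I would write
\[
|\mathcal F_d^{\full}(\alpha_1,\dots,\alpha_n,\beta_1,\dots,\beta_n)| = M + E_1, \qquad |\mathcal F_d^{\full}| = M_0 + E_0,
\]
where $M = q^{2d+2-\sum u_i}/\bigl(\zeta_q(2)\prod(1+q^{-u_i})\bigr)$, $M_0 = q^{2d+2}/\zeta_q(2)$, and $E_0, E_1$ are both $O(q^{(1+\varepsilon)d})$. Then the ratio is $(M+E_1)/(M_0+E_0)$.

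The next step is the routine manipulation: since $M_0 \asymp q^{2d+2}$, we have $(M_0+E_0)^{-1} = M_0^{-1}\bigl(1 + O(q^{(\varepsilon-1)d})\bigr)$, so
\[
\frac{|\mathcal F_d^{\full}(\alpha_1,\dots,\beta_n)|}{|\mathcal F_d^{\full}|} = \frac{M}{M_0} + \frac{M}{M_0}\cdot O(q^{(\varepsilon-1)d}) + \frac{E_1}{M_0}\bigl(1+O(q^{(\varepsilon-1)d})\bigr).
\]
The main term gives $\frac{M}{M_0} = \frac{q^{-\sum u_i}}{\prod(1+q^{-u_i})}$, which is bounded, and $E_1/M_0 = O(q^{(\varepsilon-1)d})$, so both error contributions collapse into a single $O(q^{(\varepsilon-1)d})$, yielding the first displayed asymptotic.

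For the second form, I would simply expand the product $\prod_{i=1}^n (1+q^{-u_i})^{-1}$ using $(1+q^{-u_i})^{-1} = 1 - q^{-u_i} + O(q^{-2u_i})$, or more crudely just observe that
\[
\prod_{i=1}^n (1+q^{-u_i})^{-1} = 1 + O\Bigl(\sum_{i=1}^n q^{-u_i}\Bigr),
\]
since each $q^{-u_i}$ is bounded by $1$. Factoring out $q^{-\sum u_i}$ gives the second expression.

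There is essentially no real obstacle here; the argument is pure bookkeeping. The only point that requires a moment's care is verifying that the error term $O(q^{(1+\varepsilon)d})$ from the numerator, once divided by $M_0 \asymp q^{2d+2}$, indeed produces an error of shape $O(q^{(\varepsilon-1)d})$ uniformly in the parameters $\alpha_i, \beta_i$, which holds because the constants in Proposition \ref{full:nalpha} and Corollary \ref{full} do not depend on the choice of points.
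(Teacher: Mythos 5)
Your proposal is correct and matches the paper's approach exactly; the paper simply states that the result follows "by combining Proposition \ref{full:nalpha} and Corollary \ref{full}," and your bookkeeping with $M$, $M_0$, $E_1$, $E_0$ is precisely the intended (and correct) division-of-asymptotics argument.
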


We finish the section by computing the expected number of points in the full Artin-Schreier family.

\begin{cor} Fix $\alpha\in \PP^1(\F_{q^k})$ of degree $u$ over $\F_q$. Let  $\beta \in \PP^1(\F_{q^u})$. Then
 \[|\mathcal  F_d^{\full}(\alpha,\beta)|=\frac{q^{2d+2-u}}{\zeta_q(2)(1+q^{-u})}+\begin{cases} O\left(q^{(\varepsilon+1)d+u}  \right) & \beta=\infty,\\\\
                                                                                        O\left(q^{(\varepsilon+1)d}  \right) & \beta\in \F_{q^u}.
                                                                                      \end{cases}
\]
\end{cor}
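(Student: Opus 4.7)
The plan is to follow exactly the same template that was used for Corollary \ref{1alpha} in the ordinary case. The statement consists of two cases, distinguished by whether $\beta$ is finite or the point at infinity, and the two cases have the same main term (but different error terms because the second case is obtained via subtraction from $|\mathcal F_d^{\full}|$).

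First I would dispose of the case $\beta\in \F_{q^u}$. This is essentially immediate: apply Proposition \ref{full:nalpha} with $n=1$, the single point $\alpha_1=\alpha$ of degree $u_1=u$ and the single value $\beta_1=\beta\in\F_{q^u}$. The formula there specializes to
\[
|\mathcal F_d^{\full}(\alpha,\beta)|
=\frac{q^{2d+2-u}}{\zeta_q(2)(1+q^{-u})}+O\!\left(q^{(1+\varepsilon)d}\right),
\]
which is exactly what we want.

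For the case $\beta=\infty$, I would use the partition
\[
\mathcal F_d^{\full}=\mathcal F_d^{\full}(\alpha,\infty)\ \sqcup\ \bigsqcup_{\beta'\in\F_{q^u}}\mathcal F_d^{\full}(\alpha,\beta'),
\]
which holds because, for each pair $(g,h)\in\mathcal F_d^{\full}$, the value $g(\alpha)/h(\alpha)\in\PP^1(\F_{q^u})$ is well-defined (since $(g,h)=1$) and falls in exactly one of these fibers. Consequently,
\[
|\mathcal F_d^{\full}(\alpha,\infty)|=|\mathcal F_d^{\full}|-\sum_{\beta'\in\F_{q^u}}|\mathcal F_d^{\full}(\alpha,\beta')|.
\]
By Lemma \ref{beta=0}, each term in the sum equals $|\mathcal F_d^{\full}(\alpha,0)|$, so the right-hand side simplifies to $|\mathcal F_d^{\full}|-q^u|\mathcal F_d^{\full}(\alpha,0)|$. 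Plugging in Corollary \ref{full} for the size of the full family and the already-proved finite-$\beta$ case (with $\beta=0$) yields
\[
|\mathcal F_d^{\full}(\alpha,\infty)|
=\frac{q^{2d+2}}{\zeta_q(2)}\left(1-\frac{1}{1+q^{-u}}\right)+O\!\left(q^{(1+\varepsilon)d+u}\right)
=\frac{q^{2d+2-u}}{\zeta_q(2)(1+q^{-u})}+O\!\left(q^{(\varepsilon+1)d+u}\right),
\]
since $1-1/(1+q^{-u})=q^{-u}/(1+q^{-u})$. The extra factor of $q^u$ in the error term simply tracks the factor $q^u$ pulled out of the sum over $\beta'\in\F_{q^u}$.

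There is no real obstacle here; the only point worth watching is bookkeeping of the error term in the second case, where multiplying the error from $|\mathcal F_d^{\full}(\alpha,0)|$ by $q^u$ unavoidably enlarges it from $O(q^{(1+\varepsilon)d})$ to $O(q^{(1+\varepsilon)d+u})$, which is why the statement records two separate error terms. Everything else is direct substitution into Proposition \ref{full:nalpha}, Corollary \ref{full}, and Lemma \ref{beta=0}.
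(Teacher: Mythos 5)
Your proposal is correct and follows the paper's own argument exactly: specialize Proposition \ref{full:nalpha} to $n=1$ for finite $\beta$, and for $\beta=\infty$ subtract $q^u|\mathcal F_d^{\full}(\alpha,0)|$ from $|\mathcal F_d^{\full}|$ via Lemma \ref{beta=0}, tracking the enlarged error $O(q^{(1+\varepsilon)d+u})$.
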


\begin{proof} The case of $\beta \in \F_{q^u}$ easily follows from Proposition \ref{full:nalpha}. For $\beta=[1:0]$, we have, by Lemma
\ref{beta=0} that
 \begin{eqnarray*}
 |\mathcal  F_d^{\full}(\alpha,\infty)|&=& |\mathcal  F_d^{\full}| - \sum_{\beta \in \F_{q^u}}|\mathcal  F_d^{\full}(\alpha,\beta)|\\
&=&|\mathcal  F_d^{\full}| - q^u |\mathcal  F_d^{\full}(\alpha, 0)| \\
&=& \frac{q^{2d+2-u}}{\zeta_q(2)(1+q^{-u})}+O\left(q^{(\varepsilon+1)d+u} \right).
 \end{eqnarray*}

\end{proof}

We then obtain the following result.
\begin{prop}\label{quotientfull}
 Fix $\alpha\in \PP^1(\F_{q^k})$ of degree $u$ over $\F_q$. Let  $\beta \in \PP^1(\F_{q^u})$. Then
 \[\frac{|\mathcal  F_d^{\full}(\alpha,\beta)|}{|\mathcal  F_d^{\full}|}=\frac{q^{-u}}{1+q^{-u}}+\begin{cases} O\left(q^{(\varepsilon-1)d+u}  \right) & \beta=\infty,\\\\
                                                                                        O\left(q^{(\varepsilon-1)d}  \right) & \beta\in \F_{q^u}.
                                                                                      \end{cases}
\]
\end{prop}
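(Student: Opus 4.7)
The plan is to simply divide the two asymptotic formulas proved immediately above. From the preceding corollary we have
\[|\mathcal F_d^{\full}(\alpha,\beta)| = \frac{q^{2d+2-u}}{\zeta_q(2)(1+q^{-u})} + E_1,\]
where $E_1 = O(q^{(\varepsilon+1)d+u})$ if $\beta = \infty$ and $E_1 = O(q^{(\varepsilon+1)d})$ if $\beta \in \F_{q^u}$, while Corollary~\ref{full} gives
\[|\mathcal F_d^{\full}| = \frac{q^{2d+2}}{\zeta_q(2)}\left(1 + O(q^{(\varepsilon-1)d})\right).\]

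Forming the quotient, I would invert the denominator via the elementary identity $1/(1+x) = 1 + O(x)$, which applies since $q^{(\varepsilon-1)d}\to 0$. This yields
\[\frac{|\mathcal F_d^{\full}(\alpha,\beta)|}{|\mathcal F_d^{\full}|} = \left(\frac{q^{-u}}{1+q^{-u}} + \frac{\zeta_q(2)\, E_1}{q^{2d+2}}\right)\left(1 + O(q^{(\varepsilon-1)d})\right).\]
Distributing, the main term $\frac{q^{-u}}{1+q^{-u}}$ contributes an error of size $O(q^{(\varepsilon-1)d-u})$, while the contribution coming from $E_1$ is $O(q^{(\varepsilon-1)d+u})$ when $\beta=\infty$ and $O(q^{(\varepsilon-1)d})$ when $\beta\in\F_{q^u}$. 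In each case the latter absorbs the former, and the stated bound follows.

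No serious obstacle arises: both inputs have already been established. The only point requiring care is verifying that the $q^u$ factor appearing in the $\beta=\infty$ error, which traces back through Lemma~\ref{beta=0} (where one subtracts $q^u$ copies of the $\beta=0$ count from the full family size), is indeed the dominant error after division. That is immediate from the display above.
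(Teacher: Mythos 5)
Your proof is correct and follows exactly the route the paper intends: divide the asymptotic for $|\mathcal F_d^{\full}(\alpha,\beta)|$ from the preceding corollary by that for $|\mathcal F_d^{\full}|$ from Corollary~\ref{full}, then check that the error terms combine as claimed. The bookkeeping is right: the main term contributes $O(q^{(\varepsilon-1)d-u})$, which is dominated in both the $\beta=\infty$ case (by $O(q^{(\varepsilon-1)d+u})$) and the $\beta\in\F_{q^u}$ case (by $O(q^{(\varepsilon-1)d})$) since $u\ge 1$.
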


\begin{lem}\label{fullfiber}Fix $\alpha\in \PP^1(\F_{q^k})$ of degree $u$ over $\F_q$. The expected number of $\F_{q^k}$-points in the fiber above
$\alpha$ is
 \[\begin{cases}
1+ O\left(q^{(\varepsilon-1)d+u}\right) & \mbox{if } \, p \nmid \frac{k}{u},\\\\
1+ \frac{p-1}{1+q^{-u}}+O\left(q^{(\varepsilon-1)d+u}\right) & \mbox{if } \, p\mid \frac{k}{u}.
 \end{cases}\]

\end{lem}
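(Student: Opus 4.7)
The plan is to imitate the proof of Lemma \ref{lem:average}, using the full-family analog Proposition \ref{quotientfull} in place of Proposition \ref{prop:5.6/4.3}. First I would apply Lemma \ref{expectednumberfiber}: since $\alpha$ has degree $u$ over $\F_q$ and $f = g/h$ has coefficients in $\F_q$, the value $f(\alpha)$ lies in $\F_{q^u} \cup \{\infty\}$, and the fiber above $\alpha$ contains $1$ point when $f(\alpha)=\infty$, $p$ points when $f(\alpha)=\beta \in \F_{q^u}$ with $\tr_k(\beta)=0$, and $0$ points otherwise. Thus the expected fiber size equals
\[
\frac{|\mathcal F_d^{\full}(\alpha,\infty)|}{|\mathcal F_d^{\full}|} \;+\; p\!\!\sum_{\substack{\beta\in\F_{q^u}\\ \tr_k(\beta)=0}}\!\!\frac{|\mathcal F_d^{\full}(\alpha,\beta)|}{|\mathcal F_d^{\full}|}.
\]

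Next I would substitute Proposition \ref{quotientfull}, which gives $\frac{q^{-u}}{1+q^{-u}}$ for both probabilities (with error $O(q^{(\varepsilon-1)d+u})$ in the $\beta=\infty$ case and $O(q^{(\varepsilon-1)d})$ for $\beta\in\F_{q^u}$). To handle the trace condition I use the standard identity $\tr_k(\beta) = (k/u)\tr_u(\beta)$ for $\beta\in\F_{q^u}$. If $p\nmid k/u$, then $(k/u)$ is a unit mod $p$, so $\tr_k(\beta)=0 \Longleftrightarrow \tr_u(\beta)=0$, which is satisfied by exactly $q^u/p$ elements. If $p\mid k/u$, the condition is automatic, giving all $q^u$ elements.

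Combining these ingredients in the case $p\nmid k/u$ produces
\[
\frac{q^{-u}}{1+q^{-u}} + p\cdot\frac{q^u}{p}\cdot\frac{q^{-u}}{1+q^{-u}} = \frac{q^{-u}+1}{1+q^{-u}} = 1,
\]
while in the case $p\mid k/u$ we get
\[
\frac{q^{-u}}{1+q^{-u}} + p\cdot q^u\cdot\frac{q^{-u}}{1+q^{-u}} = \frac{q^{-u}+p}{1+q^{-u}} = 1 + \frac{p-1}{1+q^{-u}},
\]
matching the two asserted main terms. For the error term, the $\beta=\infty$ contribution is $O(q^{(\varepsilon-1)d+u})$, and summing the $p\cdot O(q^{(\varepsilon-1)d})$ contributions over at most $q^u$ values of $\beta$ gives another $O(q^{(\varepsilon-1)d+u})$, so the overall error is $O(q^{(\varepsilon-1)d+u})$ as claimed.

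This proof is essentially a bookkeeping exercise once Proposition \ref{quotientfull} is in hand; there is no real obstacle. The only subtle point is remembering to use the full trace $\tr_k$ (not $\tr_u$) in Lemma \ref{expectednumberfiber}, and correctly tracking how the factor $k/u$ interacts with the characteristic $p$ to produce the dichotomy in the statement.
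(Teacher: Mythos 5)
Your proof is correct and follows essentially the same route as the paper's: invoke Lemma \ref{expectednumberfiber} to split into the $\beta=\infty$ case and the $\beta\in\F_{q^u}$, $\tr_k(\beta)=0$ cases, substitute Proposition \ref{quotientfull} for the probabilities, and use $\tr_k(\beta)=(k/u)\tr_u(\beta)$ to count the $\beta$'s in the two subcases $p\nmid k/u$ and $p\mid k/u$. The bookkeeping and error-term tracking are identical to the paper's.
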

\begin{proof} By Lemma \ref{expectednumberfiber} and Proposition \ref{quotientfull}, we have
 \[\frac{q^{-u}}{1+q^{-u}}+O\left(q^{(\varepsilon-1)d+u}\right)+\sum_{\beta \in \F_{q^u}, \tr_k(\beta)=0}p
  \left(\frac{q^{-u}}{1+q^{-u}}+O\left(q^{(\varepsilon-1)d}\right) \right).
 \]

If $p \nmid \frac{k}{u}$, then $\tr_k(\beta)=0$ iff $\tr_u(\beta)=0$ and there are $\frac{q^u}{p}$ points in $\F_{q^u}$ with that property.

If  $p \mid \frac{k}{u}$, then $\tr_k(\beta)=\frac{k}{u}\tr_u(\beta)=0$ for all $\beta \in \F_{q^u}$ and therefore
the expected number of points in the fiber is
 \[\frac{q^{-u}}{1+q^{-u}}+O\left(q^{(\varepsilon-1)d+u}\right)+\frac{p}{1+q^{-u}}+O\left(q^{(\varepsilon-1)d+u}\right).
 \]
\end{proof}

We are ready to prove Theorem
\ref{thmnumberofpoints} (2).

\begin{thm}\label{thm:allrank}
 The expected number of $\F_{q^k}$-points on an Artin-Schreier cover in $\mathcal{AS}_{\mathfrak{g}}$ defined over $\F_q$ is
\[\begin{cases} q^{k} +1 +O\left(q^{(\varepsilon-1)d+2k}\right)& p \nmid k, \\ \\
q^k+1+(p-1)q^{k/p}+\frac{p-1}{1+q^{-1}}-(p-1)\sum_{u|\frac{k}{p}}\frac{1}{1+q^u}\pi(u)u+O\left(q^{(\varepsilon-1)d+2k}\right)& p \mid k. \end{cases}\]
\end{thm}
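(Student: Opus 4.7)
The plan is to mimic the argument used in Theorem \ref{expected-ord} for the ordinary case, but now substituting the fiber-wise expectations from Lemma \ref{fullfiber}. Concretely, I would sum the expected number of $\F_{q^k}$-points in the fiber above $\alpha$ over all $\alpha \in \PP^1(\F_{q^k})$, noting $|\PP^1(\F_{q^k})| = q^k + 1$. The constant term ``$1$'' in Lemma \ref{fullfiber} already accounts for the main term $q^k+1$ in both cases. Since each pointwise error is $O(q^{(\varepsilon-1)d+u})$ with $u \leq k$, multiplying by $q^k+1$ yields a global error of $O(q^{(\varepsilon-1)d+2k})$, matching the statement.

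When $p \nmid k$, Lemma \ref{fullfiber} gives no correction beyond the leading $1$ at any $\alpha$, so the total expectation is $q^k+1+O(q^{(\varepsilon-1)d+2k})$, which is the first line of the claim.

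When $p \mid k$, every $\alpha$ whose degree $u$ over $\F_q$ satisfies $p \mid k/u$---equivalently $u \mid k/p$---contributes the extra term $\frac{p-1}{1+q^{-u}}$. Separating the unique contribution from $\alpha = \infty$ (of degree $1$) and grouping the remaining points by degree, the extra contribution becomes
\[
\frac{p-1}{1+q^{-1}} + \sum_{u \mid k/p} \frac{p-1}{1+q^{-u}}\, \pi(u)\, u,
\]
where $\pi(u)$ is the number of monic irreducibles in $\F_q[X]$ of degree $u$. The key algebraic manipulation is the identity
\[
\frac{1}{1+q^{-u}} = 1 - \frac{1}{1+q^u},
\]
combined with the standard counting formula $\sum_{u \mid n} u\, \pi(u) = q^n$. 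Applying both with $n = k/p$ rewrites the extra term as
\[
(p-1)\, q^{k/p} + \frac{p-1}{1+q^{-1}} - (p-1) \sum_{u \mid k/p} \frac{\pi(u)\, u}{1+q^u},
\]
which is exactly the second line of the theorem.

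The argument is essentially a bookkeeping exercise on top of Lemma \ref{fullfiber}; the only points requiring care are the separate treatment of $\alpha = \infty$ as a degree-$1$ point (which is not counted by $\pi(1)\cdot 1 = q$) and the passage from $\frac{1}{1+q^{-u}}$ to the form $1 - \frac{1}{1+q^u}$ so that the sum telescopes into a $q^{k/p}$ term against which the remaining $\frac{1}{1+q^u}$-weighted sum is the correction in the statement. No step is genuinely hard; the substantive analytic input lies in Proposition \ref{fullalphas} and Lemma \ref{fullfiber}, which have already been established.
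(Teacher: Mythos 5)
Your proof is correct and follows essentially the same route as the paper: sum the fiber expectations of Lemma \ref{fullfiber} over $\alpha \in \PP^1(\F_{q^k})$, separate the $\alpha=\infty$ contribution, group the remaining points of degree $u \mid k/p$ via $\pi(u)u$, and apply $\tfrac{1}{1+q^{-u}}=1-\tfrac{1}{1+q^u}$ together with $\sum_{u\mid n}u\pi(u)=q^n$ to extract the $(p-1)q^{k/p}$ term. The error bookkeeping and the treatment of $\infty$ as the extra degree-$1$ point not counted by $\pi(1)$ also match the paper's argument.
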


\begin{proof} The result for $p \nmid k$ follows from Lemma \ref{fullfiber}. If $p \mid k$ we still get the term $q^k+1$ and an
additional term given by
\begin{eqnarray*}
\sum_{u\mid \frac{k}{p}}\sum_{\alpha, \deg \alpha=u} \frac{p-1}{1+q^{-u}}&=& \frac{p-1}{1+q^{-1}}+(p-1)\sum_{u\mid \frac{k}{p}}\frac{q^u}{1+q^u}\pi(u)u\\
&=&  \frac{p-1}{1+q^{-1}}+(p-1)q^{k/p}-(p-1)\sum_{u\mid \frac{k}{p}}\frac{1}{1+q^u}\pi(u)u.\\
\end{eqnarray*}

\end{proof}

\begin{rem} When $k=p$, we obtain
\[ q^p+1+(p-1)q+O\left(q^{(\varepsilon-1)d+2p}\right).\]
\end{rem}

\section{Prescribed factorization type}\label{sec:prescribed}

Recall that
\[\mathcal{F}_d^v=\{(g(X,Z),h(X,Z)) : g(X,Z), h(X,Z)\in \mathcal{S}_{d}, (g,h)=1, h \mbox{ has factorization type } v\},\]
where $v=(r_1^{d_{1,1}},\dots,r_1^{d_{1,{\ell_1}}}, \dots, r_m^{d_{m,1}}, \dots, r_m^{d_{m,{\ell_m}}})$ and
\[h=P_{1,1}^{d_{1,1}}\cdots P_{1,\ell_1}^{d_{1,\ell_1}} \cdots P_{m,1}^{d_{m,1}}\cdots P_{m,\ell_m}^{d_{m,\ell_m}},\]
where the $P_{i,j}$ are distinct irreducible polynomials of degree $r_i$ and $r_i \not = r_j$ if $i \not = j$. The degree of $h$ is then given by $d=\sum_{i=1}^m r_i \sum_{j=1}^{\ell_i}d_{i,j}$.

We will first compute the expected number of points for this family. We need the following result.
\begin{lem}\label{lem:gcoprime}
Fix a polynomial $h \in \mathcal{S}_d.$ Then, if $h\neq 0$

\[
\left | \left\{g\in \mathcal{S}_{d} : (g,h)=1 \right\}\right| =q^{d+1} \prod_{(P) | (h)} (1-|P|^{-1}).
\]
\end{lem}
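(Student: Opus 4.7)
The plan is to use standard Möbius inversion over divisors of $h$, combined with the observation that fixing a divisor gives a linear (actually affine-linear on the coefficient side) condition that is always solvable within $\mathcal{S}_d$.

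First I would rewrite the coprimality condition via the indicator identity
\[
\mathbf{1}_{(g,h)=1} \;=\; \sum_{(D)\mid (g,h)} \mu(D),
\]
which leads to
\[
\bigl|\{g\in \mathcal{S}_d : (g,h)=1\}\bigr| \;=\; \sum_{(D)\mid (h)} \mu(D)\,\bigl|\{g\in \mathcal{S}_d : D\mid g\}\bigr|.
\]
Only squarefree divisors $D$ of $h$ contribute, and for each such $D$, writing $g = D g'$ with $g' \in \mathcal{S}_{d-\deg D}$ shows that
\[
\bigl|\{g\in \mathcal{S}_d : D\mid g\}\bigr| \;=\; q^{\,d+1-\deg D}.
\]
The key point here, which requires only a brief remark, is that $\deg D \le \deg h = d$ is automatic from $(D)\mid (h)$, so the count $q^{d+1-\deg D}$ is always well defined (there is no error term to worry about, unlike in Lemma \ref{the-one-before}).

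Substituting gives
\[
\bigl|\{g\in \mathcal{S}_d : (g,h)=1\}\bigr| \;=\; q^{d+1} \sum_{(D)\mid (h)} \mu(D)\,|D|^{-1}.
\]
Finally, since $\mu(D)|D|^{-1}$ is multiplicative and $\mu$ is supported on squarefree polynomials, the sum factors as an Euler product over the prime divisors of $h$:
\[
\sum_{(D)\mid (h)} \mu(D)\,|D|^{-1} \;=\; \prod_{(P)\mid (h)} \bigl(1 - |P|^{-1}\bigr),
\]
which yields the claimed formula. There is no real obstacle here; the only mild subtlety worth flagging in the write-up is that everything must be interpreted in terms of ideals (i.e., up to units) so that the Möbius sum over divisors of the homogeneous polynomial $h$ is well posed, and that $h \neq 0$ is needed to ensure the product on the right is a finite product over the finitely many irreducible homogeneous factors of $h$.
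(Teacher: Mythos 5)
Your proof is correct and matches the paper's intent: the paper states that the lemma ``follows directly from the proof of Lemma \ref{the-one-before},'' and that proof is precisely the M\"obius inclusion--exclusion over divisors $(D)\mid(h)$ that you carry out, specialized to the case of no point conditions (so $\sum u_i = 0$ and the error range $d - \sum u_i < \deg D \le d$ is empty, which is exactly your observation that $\deg D \le d$ is automatic and the count is exact).
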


We remark that this Lemma follows directly from the proof of Lemma \ref{the-one-before}.

\begin{prop}\label{beta=11} Fix $\alpha \in \mathbb{P}^1(\F_{q^k})$ of degrees $u$ over $\F_q$. Let $\beta \in \mathbb{P}^1(\F_{q^{u}})$. Then, if $u \leq d$,
 \[\frac{|\mathcal{F}_d^v(\alpha,\beta)|}{|\mathcal{F}_d^v|} = \begin{cases} q^{-u} & \deg(\alpha)=u \not = r_i \forall i, \beta \not = \infty,\\\\
                               0 & \deg(\alpha)=u \not = r_i \forall i, \beta = \infty,\\\\
                              \frac{q^{-r_{i_0}}(\pi(r_{i_0})-\ell_{i_0})}{\pi(r_{i_0})}   & \deg(\alpha)=r_{i_0}, \beta \not = \infty,\\\\
 \frac{\ell_{i_0}}{\pi(r_{i_0})} & \deg(\alpha)=r_{i_0}, \beta = \infty.\\                      \end{cases}
\]
If $u>d$ the above quotient is $O(q^{-d})$.
\end{prop}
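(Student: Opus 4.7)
The plan is to exploit the fact that the ``$h$-local'' quantity $\prod_{(P)\mid(h)}(1-|P|^{-1})$ takes the common value $\prod_i(1-q^{-r_i})^{\ell_i}$ for every $h$ of factorization type $v$, so that it cancels exactly from the quotient $|\mathcal{F}_d^v(\alpha,\beta)|/|\mathcal{F}_d^v|$. By Lemma~\ref{lem:gcoprime}, $|\mathcal{F}_d^v|=N_v\,q^{d+1}\prod_i(1-q^{-r_i})^{\ell_i}$, where $N_v$ is the number of polynomials in $\mathcal{S}_d$ of factorization type $v$, so the problem reduces to tracking (i) which $h$'s are compatible with the value $\beta$ at $\alpha$, and (ii) for each such $h$, the number of admissible $g$'s.

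The case split is natural. When $\deg(\alpha)=u\neq r_i$ for all $i$, the minimal polynomial $m_\alpha$ is never a factor of any $h$ of type $v$, so $h(\alpha)\neq 0$ always: if $\beta=\infty$ the set is empty (ratio $0$), and if $\beta\neq\infty$ we must count $g\in\mathcal{S}_d$ with $g(\alpha)=\beta h(\alpha)\in\F_{q^u}$ and $(g,h)=1$ for every $h$ of type $v$. When $\deg(\alpha)=r_{i_0}$, the case $\beta=\infty$ forces $m_\alpha\in\{P_{i_0,j}\}$ (so that $h(\alpha)=0$), whereas $\beta\neq\infty$ forces $m_\alpha\notin\{P_{i_0,j}\}$ (else $(g,h)=1$ would be contradicted by $m_\alpha\mid g$). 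The elementary combinatorial identity
\[
\frac{\binom{\pi(r_{i_0})-1}{\ell_{i_0}-1}}{\binom{\pi(r_{i_0})}{\ell_{i_0}}}=\frac{\ell_{i_0}}{\pi(r_{i_0})}
\]
shows that the fraction of $h$'s of type $v$ with $m_\alpha\in\{P_{i_0,j}\}$ is exactly $\ell_{i_0}/\pi(r_{i_0})$; the assignment of the multiplicities $d_{i_0,j}$ is irrelevant to this ratio since primes and multiplicities are chosen independently.

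The core analytic input is the following count: whenever $m_\alpha\notin\{P_{i,j}\}$ and $c\in\F_{q^u}$,
\[
\bigl|\{g\in\mathcal{S}_d:(g,h)=1,\;g(\alpha)=c\}\bigr|=q^{d+1-u}\prod_i(1-q^{-r_i})^{\ell_i}.
\]
To prove it, consider the $\F_q$-linear map $\Phi\colon\mathcal{S}_d\to\F_{q^u}\oplus\bigoplus_{i,j}\F_{q^{r_i}}$, $g\mapsto(g(\alpha),g\bmod P_{1,1},\ldots,g\bmod P_{m,\ell_m})$. By the Chinese Remainder Theorem (all moduli being pairwise distinct irreducibles), $\Phi$ is surjective whenever $d+1\ge u+\sum_i\ell_i r_i$, and the condition $(g,h)=1$ corresponds precisely to each $P_{i,j}$-component being nonzero. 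Counting fibres gives $q^{d+1-u-\sum_i\ell_i r_i}\prod_i(q^{r_i}-1)^{\ell_i}$, which simplifies to the claimed expression. For the case $\beta=\infty$, $\deg(\alpha)=r_{i_0}$, the value condition is automatic on the selected fraction of $h$'s, and Lemma~\ref{lem:gcoprime} directly gives $q^{d+1}\prod_i(1-q^{-r_i})^{\ell_i}$ admissible $g$'s per $h$. Assembling everything recovers all four displayed ratios.

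Finally, for $u>d$, Remark~\ref{linmap} tells us that $g\mapsto g(\alpha)$ is injective on $\mathcal{S}_d$, so at most one $g$ per $h$ satisfies $g(\alpha)=c$, whence $|\mathcal{F}_d^v(\alpha,\beta)|\le N_v$ and the ratio is $O(q^{-d})$. The main obstacle I foresee is reconciling the hypothesis ``$u\le d$'' with the Chinese-Remainder surjectivity bound ``$u\le d+1-\sum_i\ell_i r_i$'': for $d$ within this gap the formula may require a direct inclusion-exclusion on divisors of $h$ or the accommodation of an implicit error term, but the main-term computation above is unaffected.
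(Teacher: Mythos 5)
Your proof is correct and takes essentially the same route as the paper: both use Lemma~\ref{lem:gcoprime} so that the factor $\prod_i(1-q^{-r_i})^{\ell_i}$ cancels from numerator and denominator, split on whether $\deg\alpha$ matches some $r_{i_0}$ and whether $\beta=\infty$, and use the binomial ratio $\binom{\pi(r_{i_0})-1}{\ell_{i_0}-1}/\binom{\pi(r_{i_0})}{\ell_{i_0}}=\ell_{i_0}/\pi(r_{i_0})$ to account for the $h$'s having $m_\alpha$ among the $P_{i_0,j}$. The only technical variation is that the paper reduces $\beta\in\F_{q^u}$ to $\beta=0$ via Lemma~\ref{beta=0} and then factors $g=m_\alpha g_1$, while you run a direct Chinese Remainder argument on $g\mapsto(g(\alpha),g\bmod P_{1,1},\dots)$; these are equivalent. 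The slackness you flag at the end -- exactness needs $d+1\ge u+\sum_i\ell_i r_i$, which is stronger than the stated hypothesis $u\le d$ -- is a genuine observation and is also latent, unacknowledged, in the paper's version (applying Lemma~\ref{lem:gcoprime} to $g_1\in\mathcal{S}_{d-u}$ requires $d-u\ge\sum_i\ell_i r_i$); it is harmless here because the proposition is only used with $d\to\infty$ and the $r_i,\ell_i$ fixed.
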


\begin{proof}
We first consider the size of the whole family. By Lemma \ref{lem:gcoprime} we have
\begin{align} \label{generalv}
|\mathcal{F}_d^v| &=\sum_{\deg P_{i,j}=r_i, \mbox{\small all different}} |\{g \in \mathcal{S}_d\, : \, (g,h)=1\}| \nonumber\\
&=  q^{d+1}\prod_{i=1}^m (1-q^{-r_i})^{\ell_i}\sum_{\deg P_{i,j}=r_i, \mbox{\small all different}}1.
\end{align}
If $\deg(\alpha)=u \not = r_i$, and $\beta \in \F_{q^u}$, then by  Lemma \ref{beta=0} it suffices to find $|\mathcal{F}_d^v(\alpha,\beta)|$ for $\beta=0$.
If this is the case, then we need $g(\alpha)=0$, or $m_\alpha \mid g$.
\begin{align*}
|\mathcal{F}_d^v(\alpha,\beta)| &=\sum_{\deg P_{i,j}=r_i, \mbox{\small all different}} |\{g\in \mathcal{S}_d\, : \, (g,h)=1, m_\alpha \mid g\}|\\
&=  q^{d+1-u}\prod_{i=1}^m (1-q^{-r_i})^{\ell_i}\sum_{\deg P_{i,j}=r_i, \mbox{\small all different}}1.
\end{align*}

If $\deg(\alpha)=u \not = r_i$, and $\beta =\infty,$ we get a contradiction and thus
\[|\mathcal{F}_d^v(\alpha,\infty)|=0.\]

Now assume that $\deg(\alpha)= u=r_{i_0}$, for some $i_0$ and that $\beta \in \F_{q^u}$. By Lemma \ref{beta=0} we can again assume that $\beta=0$. In this case we need to impose the condition that $h(\alpha)\not = 0$. Therefore,
\[|\mathcal{F}_d^v(\alpha,\beta)| =  q^{d+1-r_{i_0}}\prod_{i=1}^m (1-q^{-r_i})^{\ell_i}\sum_{\deg P_{i,j}=r_i, P_{i_0,j}\not = m_\alpha, \mbox{\small all different}}1.\]
Finally, if $\deg(\alpha)= r_{i_0}$ for some $i_0$ and $\beta =\infty$, we need that $h(\alpha)=0$ and $g(\alpha)\not = 0$.
\[|\mathcal{F}_d^v(\alpha,\infty)| =  q^{d+1}\prod_{i=1}^m (1-q^{-r_i})^{\ell_i}\sum_{\deg P_{i,j}=r_i, \exists P_{i_0,j} = m_\alpha, \mbox{\small all different}}1.\]

The result now follows from the identity
\[|\{\deg P_{i,j}=r_i, \mbox{\small all different}\}|= \prod_{i=1}^m\binom{\pi(r_i)}{\ell_i}.\]
\end{proof}

We are now ready to prove the main result of this section.

\begin{thm}\label{thm:vgeneral}
The expected number of $\F_{q^k}$-points on an Artin-Schreier cover with poles given by the factorization type $v$
 defined over $\F_q$ is
\[\begin{cases} q^{k} +1 & p \nmid k, \\ \\q^k+1+(p-1)q^{k/p}+(p-1)\left(1-\sum_{r_i\mid k}\ell_ir_i\right) & p \mid k. \end{cases}\]
 \end{thm}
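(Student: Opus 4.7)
The plan is to adapt the argument used in the proofs of Theorems~\ref{expected-ord} and~\ref{thm:allrank}: for each $\alpha\in\PP^1(\F_{q^k})$ of degree $u$ over $\F_q$, compute the expected number of $\F_{q^k}$-points in the fiber above $\alpha$ by combining Lemma~\ref{expectednumberfiber} with Proposition~\ref{beta=11}, and then sum over all $\alpha$.

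For the per-fiber step, Lemma~\ref{expectednumberfiber} expresses the expectation as the probability that $f(\alpha)=\infty$ plus $p$ times the total probability that $f(\alpha)\in\F_{q^u}$ satisfies $\tr_k f(\alpha)=0$. Using $\tr_k\beta=(k/u)\tr_u\beta$ for $\beta\in\F_{q^u}$, the set $\{\beta\in\F_{q^u}:\tr_k\beta=0\}$ has size $q^u/p$ when $p\nmid k/u$ and size $q^u$ when $p\mid k/u$. Substituting the exact probabilities from Proposition~\ref{beta=11} and splitting cases according to whether $u=r_{i_0}$ for some $i_0$, a direct calculation yields: the expected per-fiber contribution equals $1$ whenever $p\nmid k/u$; it equals $p$ when $p\mid k/u$ and $u$ is not any $r_i$; and it equals $p-(p-1)\ell_{i_0}/\pi(r_{i_0})$ when $p\mid k/u$ and $u=r_{i_0}$.

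Summing over $\alpha\in\PP^1(\F_{q^k})$ closes the proof. The case $p\nmid k$ is immediate: every fiber averages $1$, so the total is $q^k+1$. For $p\mid k$, only $\alpha$ with $\deg\alpha\mid k/p$ produce any excess over $1$. Grouping by $u=\deg\alpha$ and invoking $\sum_{u\mid k/p}u\pi(u)=q^{k/p}$ together with $|\PP^1(\F_{q^{k/p}})|=q^{k/p}+1$, the leading ``$p$'' part of the per-fiber expectation assembles into $(p-1)q^{k/p}+(p-1)$, where the constant $(p-1)$ accounts for $\alpha=\infty$. The Case~$2$ correction $-(p-1)\ell_{i_0}/\pi(r_{i_0})$, multiplied by the $r_{i_0}\pi(r_{i_0})$ closed points of degree $r_{i_0}$, contributes $-(p-1)\ell_{i_0}r_{i_0}$ for each index $i_0$ with $r_{i_0}\mid k/p$; collecting these terms produces the final summand $-(p-1)\sum_{r_i\mid k}\ell_ir_i$ (indices with $r_i\mid k$ but $r_i\nmid k/p$ do not contribute, since in those cases $p\nmid k/r_i$).

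The main obstacle is the careful bookkeeping at $\alpha=\infty$ and for degree $u=1$ in general, where one must reconcile the convention $\pi(1)=q$ of Proposition~\ref{beta=11} with the $q+1$ degree-$1$ points of $\PP^1(\F_{q^k})$, so that the extra constant $(p-1)$ in the stated formula is recovered cleanly without a fractional discrepancy.
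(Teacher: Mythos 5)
Your proof follows the same structure as the paper's: apply Lemma~\ref{expectednumberfiber} with Proposition~\ref{beta=11} to compute the expected fiber size over each $\alpha$, then sum over $\PP^1(\F_{q^k})$. The per-fiber case analysis (expectation $1$ when $p\nmid k/u$; $p$ when $p\mid k/u$ and $u\neq r_i$; $p-(p-1)\ell_{i_0}/\pi(r_{i_0})$ when $p\mid k/u$ and $u=r_{i_0}$) is correct and matches what one extracts from the paper's computation, and the leading assembly of $(p-1)(q^{k/p}+1)$ via $\PP^1(\F_{q^{k/p}})$ is fine.

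There is one genuine gap, though: the transition from your sum over $\{i : r_i\mid k/p\}$ to the theorem's sum over $\{i : r_i\mid k\}$. Your parenthetical observation that ``indices with $r_i\mid k$ but $r_i\nmid k/p$ do not contribute, since in those cases $p\nmid k/r_i$'' is a true statement about what the \emph{per-fiber computation} produces (those closed points yield excess $0$), but it does not reconcile the discrepancy with the theorem's formula, which \emph{would} assign those indices the nonzero weight $-(p-1)\ell_i r_i$. The two sums $\sum_{r_i\mid k/p}$ and $\sum_{r_i\mid k}$ coincide only under the standing hypothesis $p\nmid r_i$, which forces $r_i\mid k \Rightarrow r_i\mid k/p$ once $p\mid k$. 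The paper opens its proof precisely by invoking this reduction (justified by an Artin--Schreier change of variable); your writeup omits it, which is what makes the final bookkeeping step appear to need the questionable parenthetical. State $p\nmid r_i$ up front and the replacement of $\sum_{r_i\mid k/p}$ by $\sum_{r_i\mid k}$ becomes an identity rather than an assertion.
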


\begin{proof}
We can assume that $p\nmid r_i$. This is because the $\F_q$-isomorphisms $(x,y)\mapsto (x, y+ax^k)$ allow us to eliminate all the terms in $h$ such that $x$ appears to a power multiple of $p$.

By Lemma \ref{expectednumberfiber}, the final count becomes
\begin{align*}
&\sum_{\alpha \in \mathbb{P}^1(\F_{q^k})} \frac{|\mathcal{F}_d^v(\alpha,\infty)|}{|\mathcal{F}_d^v|}+\sum_{\alpha \in \mathbb{P}^1(\F_{q^k})} \sum_{\beta\in \F_{q^{\deg(\alpha)}}, \tr_k(\beta)=0}p\frac{|\mathcal{F}_d^v(\alpha,\beta)|}{|\mathcal{F}_d^v|}  \\
&=\sum_{r_i\mid k}\frac{\ell_i}{\pi(r_i)}\sum_{\alpha \in \mathbb{P}^1(\F_{q^k}), \deg(\alpha)= r_i}1+ \sum_{\alpha \in \mathbb{P}^1(\F_{q^k})} \sum_{\beta \in \F_{q^{\deg(\alpha)}}, \tr_k(\beta)=0}pq^{-\deg(\alpha)}\\&-\sum_{r_i\mid k}\frac{\ell_i}{\pi(r_i)}\sum_{\alpha \in \mathbb{P}^1(\F_{q^k}), \deg(\alpha)= r_i}\sum_{\beta \in \F_{q^{r_i}}, \tr_k(\beta)=0}pq^{-r_i}.
\end{align*}

If $p\nmid k$, then $\tr_k(\beta)=0$ if and only if $\tr_u(\beta)=0$ and there are $\frac{q^u}{p}$ in $\F_{q^u}$ with that property. Thus we obtain $q^k+1$.
If $p\mid k$, then since $p\nmid r_i$, if $r_i \mid k$ then $p\mid \frac{k}{r_i}$ and $\tr_k(\beta)=0$ for $\beta \in \F_{q^{r_i}}$.
The final count then becomes
\begin{align*}
&\sum_{\alpha \in \mathbb{P}^1(\F_{q^k})} \sum_{\beta \in \F_{q^{\deg(\alpha)}}, \tr_k(\beta)=0}pq^{-\deg(\alpha)}+\sum_{r_i\mid k}\frac{\ell_i}{\pi(r_i)}\sum_{\alpha \in \mathbb{P}^1(\F_{q^{r_i}}), \deg \alpha =r_i}\left(1-\sum_{\beta \in \F_{q^{r_i}}, \tr_k(\beta)=0}pq^{-r_i}\right)\\
&=q^k+1+(p-1)(q^{k/p}+1)-\sum_{r_i\mid k}\frac{\ell_i}{\pi(r_i)}\sum_{\alpha \in \mathbb{P}^1(\F_{q^{r_i}}), \deg \alpha =r_i}(p-1)\\
&=q^k+1+(p-1)q^{k/p}+(p-1)\left(1-\sum_{r_i\mid k}\ell_ir_i\right).\\
\end{align*}
\end{proof}

 Now suppose that we take the $p$-rank 0 family. We recall that this corresponds to $v=(1^d)$. A simple application of Theorem \ref{thm:vgeneral} yields the following.
\begin{thm}\label{thm:rank0}
 The expected number of $\F_{q^k}$-points on a $p$-rank 0 Artin-Schreier cover in $\mathcal{AS}_{\mathfrak{g},0}$  defined over $\F_q$ is
\[\begin{cases} q^{k} +1 & p \nmid k, \\\\ q^{k} +1 +(p-1)q^{k/p} & p \mid k. \end{cases}\]
\end{thm}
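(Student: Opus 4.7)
The plan is to obtain Theorem \ref{thm:rank0} as a direct specialization of Theorem \ref{thm:vgeneral} to the factorization type $v=(1^d)$. Recall that, by the discussion in Section \ref{sec:prescribed}, the family $\mathcal F_d^{{\text{rank}}\,0}$ corresponds to $h$ having a single pole (which can always be moved to infinity, so $h(X,Z)=Z^d$). In the notation of the factorization type, this means $m=1$, $r_1=1$, $\ell_1=1$, and $d_{1,1}=d$, so that $v=(1^d)$ is indeed the relevant type. The hypothesis $p\nmid r_i$ required in the proof of Theorem \ref{thm:vgeneral} is trivially satisfied since $r_1=1$ and $p\geq 3$.

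Next I would substitute these parameters into the statement of Theorem \ref{thm:vgeneral}. When $p\nmid k$, the theorem immediately gives $q^k+1$, matching the first case. When $p\mid k$, the formula reads
\[
q^k+1+(p-1)q^{k/p}+(p-1)\left(1-\sum_{r_i\mid k}\ell_i r_i\right).
\]
Since $m=1$, $r_1=1$ and $\ell_1=1$, and $r_1=1$ divides every $k$, the inner sum is simply $\ell_1 r_1=1$, so the correction term $(p-1)(1-1)$ vanishes and we are left with $q^k+1+(p-1)q^{k/p}$, exactly as claimed.

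There is essentially no obstacle here: the work has already been done in establishing Theorem \ref{thm:vgeneral}, and the only thing to check is the bookkeeping of the parameters and the vanishing of the correction term. The cleanness of the $p$-rank $0$ answer, compared with the more elaborate expression for a general factorization type, reflects precisely the fact that the unique pole sits at infinity of degree $1$, so that $\sum_{r_i\mid k}\ell_i r_i$ collapses to $1$ and cancels the constant $+1$ correction present in the general formula.
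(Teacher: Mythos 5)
Your proposal is correct and takes exactly the same route as the paper: the paper also derives Theorem \ref{thm:rank0} as an immediate specialization of Theorem \ref{thm:vgeneral} to $v=(1^d)$ (i.e.\ $m=1$, $r_1=1$, $\ell_1=1$), where the sum $\sum_{r_i\mid k}\ell_i r_i$ collapses to $1$ and the correction term vanishes.
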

This recovers the result from \cite{entin}.

Finally we consider the family of curves with $p$-rank equal to $p-1$. This means that we consider the case when $f(x)$ is a rational function with exactly 2 poles.  If the poles happen to be at $\F_q$-rational points, we are in the case corresponds to $v=(1^{d_1},1^{d_2}).$ Note that in this case we could use an automorphism of $\mathbb P^1(\F_q)$ to move the two poles to zero and infinity, and therefore this case corresponds to the case when $f(X)$ is a Laurent polynomial. Otherwise, the two poles have to be $\F_q$ Galois conjugates points in $\F_{q^2}$ and we found ourselves in the case of prescribed factorization $v=(2^d)$. The final answer for the whole $p$-rank equal to $p-1$ stratum is given by taking the average between these two cases.
Again, by applying Theorem \ref{thm:vgeneral} we get the third part of Theorem \ref{thmnumberofpoints}.

\begin{thm}\label{thm:rank1}
 The expected number of $\F_{q^k}$-points on a $p$-rank $p-1$ Artin-Schreier cover in $\mathcal{AS}_{\mathfrak{g},p-1}$ defined over $\F_q$ is
\[\begin{cases} q^{k} +1 & p \nmid k, \\\\ q^{k} +1 +(p-1)(q^{k/p}-1) & p \mid k,\, k \mbox{ even},\\\\q^{k} +1 +(p-1)q^{k/p} & p \mid k,\, k \mbox{ odd}. \end{cases}\]
\end{thm}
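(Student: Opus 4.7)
The plan is to split the $p$-rank $p-1$ stratum $\mathcal{AS}_{\mathfrak g, p-1}$ by the Galois type of its polar divisor and apply Theorem \ref{thm:vgeneral} to each piece. Since $p$-rank equal to $p-1$ forces $r(p-1) = p-1$, hence $r = 1$, every cover in this stratum has exactly two geometric poles. There are only two possible Galois orbit types for such a divisor: either both poles are $\F_q$-rational, giving the factorization type $v = (1^{d_1}, 1^{d_2})$, or they form a conjugate pair in $\mathbb P^1(\F_{q^2})\setminus \mathbb P^1(\F_q)$, giving $v = (2^d)$. In both cases the hypothesis $p \nmid r_i$ of Theorem \ref{thm:vgeneral} is automatic since $p$ is odd.

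For $v = (1^{d_1}, 1^{d_2})$ I specialize $m = 1$, $r_1 = 1$, $\ell_1 = 2$: since $1 \mid k$ always, the correction $\sum_{r_i \mid k} \ell_i r_i$ equals $2$, producing expected count $q^k + 1$ when $p \nmid k$ and $q^k + 1 + (p-1)(q^{k/p} - 1)$ when $p \mid k$. For $v = (2^d)$ I specialize $m = 1$, $r_1 = 2$, $\ell_1 = 1$: now the correction equals $2$ precisely when $2 \mid k$, so the expected count is $q^k + 1$ when $p \nmid k$, $q^k + 1 + (p-1)(q^{k/p} - 1)$ when $p \mid k$ and $k$ is even, and $q^k + 1 + (p-1)(q^{k/p} + 1)$ when $p \mid k$ and $k$ is odd.

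Following the convention indicated just before the theorem, I then combine the two subfamily counts by taking their arithmetic mean to obtain the expected count over the whole stratum $\mathcal{AS}_{\mathfrak g, p-1}$. For $p \nmid k$ and for $p \mid k$ with $k$ even the two subfamily values coincide, so the combination is immediate. For $p \mid k$ with $k$ odd, the two values $q^k + 1 + (p-1)(q^{k/p} - 1)$ and $q^k + 1 + (p-1)(q^{k/p} + 1)$ are symmetric about $q^k + 1 + (p-1) q^{k/p}$, and their mean reproduces exactly the third case of the theorem. The principal subtlety is the averaging step itself: one must be explicit about why the two Galois types enter the aggregated expected value with equal weight. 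This amounts to comparing $|\mathcal F_d^{(1^{d_1}, 1^{d_2})}|$ and $|\mathcal F_d^{(2^d)}|$ via \eqref{generalv} in the normalization the paper adopts for $\mathcal{AS}_{\mathfrak g, p-1}$, and then confirming that the resulting error terms are absorbed into the main terms quoted in the statement.
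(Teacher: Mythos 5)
Your proposal has the same structure as the paper's proof: decompose $\mathcal{AS}_{\mathfrak g, p-1}$ by the Galois type of the polar divisor into the two factorization types $(1^{d_1}, 1^{d_2})$ and $(2^d)$, specialize Theorem \ref{thm:vgeneral} to each, and combine by arithmetic mean. Your per-type formulas are correct, and the mean of the two ``$p\mid k$, $k$ odd'' cases does give $q^k + 1 + (p-1)q^{k/p}$ as stated. The one genuine gap is precisely the step you flag as the ``principal subtlety,'' namely the justification for weighting the two Galois types equally --- and that is the only nontrivial content the paper's proof supplies beyond what you wrote.

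However, the route you sketch for closing that gap is not the paper's and would not work as directly as you suggest. A straight comparison of $|\mathcal F_d^{(1^{d_1},1^{d_2})}|$ with $|\mathcal F_d^{(2^d)}|$ via \eqref{generalv} does not give a $1{:}1$ ratio: the local factors $\prod_i (1-q^{-r_i})^{\ell_i}$ differ between the two types, being $(1-q^{-1})^2$ in the first case and $1-q^{-2}$ in the second, and at fixed genus the rational-poles situation is a union of $\mathcal F_d^{(1^{d_1},1^{d_2})}$ over all admissible splits $d_1+d_2 = d$, not a single family. The paper instead counts at the level of the squarefree support of $h$: a degree-$2$ monic polynomial over $\F_q$ that is not a perfect square is either irreducible or a product of two distinct linear factors; there are $q^2 - q$ such polynomials, and exactly $q(q-1)/2$ fall into each class. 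This equidistribution of the two Galois types is what licenses the arithmetic mean, and you should cite (or reproduce) that short counting argument rather than the $|\mathcal F_d^v|$ comparison.
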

\begin{proof} The different formulas occur when $p\mid k$. 
 For $k$ even we get that both $1 \mid k$ and $2 \mid k$ and therefore we always get $q^{k} +1 +(p-1)(q^{k/p}-1)$ for $p\mid k$. When $k$ is odd, the case $p\mid k$ will yield
\[q^k+1+(p-1)(q^{k/p}-1)\]for $(1^{d_1},1^{d_2})$ 
and 
\[q^k+1+(p-1)(q^{k/p}+1)\]
for $(2^d)$. 

Each case happens half of the time. To see this, notice that $(2^d)$ corresponds to counting degree $2$ irreducible monic polynomials over $\F_q$ while $(1^{d_1},1^{d_2})$ corresponds to counting degree $2$ reducible monic polynomials with two different roots over $\F_q$. The number of degree 2 monic polynomials that are not squares is $q^2-q$, and exactly half of them are reducible. We take the average and obtain the final result.  
\end{proof}

We now proceed to the case where we fix several values, which will be needed for the computation of the moments.

\begin{prop}\label{vmulti} Let $\alpha_1, \dots, \alpha_n \in \PP^1(\F_{q^k})$ of degrees $u_1, \dots, u_n$ over $\F_q$ such that none of the $\alpha_i$ are conjugate to each other. Let $\beta_i \in \F_{q^{u_i}}$ for $1\leq i \leq n$. Then
 \[\frac{|\mathcal{F}_d^v(\alpha_1, \dots, \alpha_n,\beta_1, \dots, \beta_n)|}{|\mathcal{F}_d^v|} = \prod_{i=1}^m (1-\tau(r_i,\ell_i;u_1,\dots,u_n))q^{-(u_1+\cdots+u_n)}+ O(q^{(\varepsilon-1) d}),\]
 where $0\leq \tau(r_i,\ell_i;u_1,\dots,u_n)\leq 1$ is a constant that depends on the number of $u_j$'s that are equal to $r_i$ and is equal to zero if
 $u_j \not = r_i$ for all $j$.
\end{prop}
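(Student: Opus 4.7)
The plan is to adapt the "fix $h$, count valid $g$'s, then sum over $h$'s" strategy from Proposition \ref{beta=11} to handle conditions at several distinct points simultaneously via the Chinese Remainder Theorem. Since each $\beta_i \in \F_{q^{u_i}}$ is finite, the condition $(\beta_{i,X}h-\beta_{i,Z}g)(\alpha_i)=0$ becomes $g(\alpha_i)=\beta_i h(\alpha_i)$, which forces $h(\alpha_i)\neq 0$, equivalently $m_{\alpha_i}\nmid h$. Writing $n_i:=\#\{j:u_j=r_i\}$, the admissible $h$ are those whose $\ell_i$ distinct degree-$r_i$ irreducible factors are chosen from the $\pi(r_i)-n_i$ degree-$r_i$ irreducibles avoiding the $m_{\alpha_j}$ with $u_j=r_i$. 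If $\pi(r_i)-n_i<\ell_i$ for some $i$, both sides of the target formula vanish (since then $\tau=1$ for that index), so we may assume the admissible set is non-empty.

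For each admissible $h$, the value conditions cut the space of $g \in \mathcal{S}_d$ by $\sum u_i$ dimensions over $\F_q$: the $m_{\alpha_i}$ are pairwise coprime (no two $\alpha_i$ are conjugate) and each is coprime to $h$, so Remark \ref{linmap} combined with CRT shows that the map $\mathcal{S}_d \to \prod_i \F_{q^{u_i}}$, $g\mapsto (g(\alpha_i))_i$, is surjective with kernel of dimension $d+1-\sum u_i$ (for $d$ large enough). In particular, for every prescribed tuple $(c_i)$ there are exactly $q^{d+1-\sum u_i}$ preimages, uniformly in the $c_i$. Imposing $(g,h)=1$ via Möbius inclusion-exclusion, exactly as in the proof of Lemma \ref{the-one-before}, then yields
\[\#\{g\in\mathcal{S}_d : (g,h)=1,\ g(\alpha_i)=\beta_i h(\alpha_i)\ \forall\, i\} = q^{d+1-\sum u_i}\prod_{(P)\mid(h)}(1-|P|^{-1}) + O(q^{\varepsilon d}),\]
and for $h$ of factorization type $v$, $\prod_{(P)\mid(h)}(1-|P|^{-1})=\prod_i(1-q^{-r_i})^{\ell_i}$, a constant independent of the specific admissible $h$.

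Summing over admissible $h$'s and dividing by $|\mathcal{F}_d^v|$ as given by \eqref{generalv}, the factors $q^{d+1}\prod_i(1-q^{-r_i})^{\ell_i}$ cancel, leaving $q^{-\sum u_i}$ times the ratio of the number of admissible ordered tuples $(P_{i,j})$ of distinct irreducibles to the unrestricted count. This ratio factors over $i$ as
\[\prod_{i=1}^m\frac{(\pi(r_i)-n_i)(\pi(r_i)-n_i-1)\cdots(\pi(r_i)-n_i-\ell_i+1)}{\pi(r_i)(\pi(r_i)-1)\cdots(\pi(r_i)-\ell_i+1)} \ =:\ \prod_{i=1}^m\bigl(1-\tau(r_i,\ell_i;u_1,\dots,u_n)\bigr),\]
which lies in $[0,1]$ and equals $1$ precisely when $n_i=0$, i.e., when no $u_j$ equals $r_i$. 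Since the admissible set and the combinatorial counts are bounded constants (depending only on $v$ and the $u_j$'s), the aggregated error is $O(q^{\varepsilon d})$, so after dividing by $|\mathcal{F}_d^v|\asymp q^{d+1}$ we obtain the required $O(q^{(\varepsilon-1)d})$.

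The main things to watch are the uniformity of the inclusion-exclusion error with respect to the prescribed values $\beta_i h(\alpha_i)$ (which follows from the value-agnostic counting in Remark \ref{linmap} and Lemma \ref{lemma: general}) and the fact that any overcounting due to labelings of distinct $P_{i,j}$'s with equal exponents $d_{i,j}$ is the same in the numerator and denominator and hence cancels in the ratio. Otherwise the argument is a routine generalization of the single-$\alpha$ computation of Proposition \ref{beta=11}.
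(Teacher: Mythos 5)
Your proof is correct and follows essentially the same strategy as the paper's: both sum over the (finitely many) admissible polynomials $h$ of factorization type $v$ avoiding the $m_{\alpha_i}$, count the matching $g$'s via CRT plus M\"obius inclusion--exclusion to get $q^{d+1-\sum u_i}\prod_i(1-q^{-r_i})^{\ell_i} + O(q^{\varepsilon d})$ per admissible $h$, and then reduce the ratio with $|\mathcal{F}_d^v|$ to the combinatorial quotient $\prod_i \binom{\pi(r_i)-n_i}{\ell_i}/\binom{\pi(r_i)}{\ell_i}$. The only cosmetic difference is that the paper reduces to all-nonzero $\beta_i$ by factoring $g = g_1\prod_{j>\ell} m_{\alpha_j}$ and then invokes Lemma \ref{the-one-before} with $g$ and $h$ interchanged and $\beta_i$ replaced by $\beta_i^{-1}$, whereas you count the constrained $g$'s in $\mathcal{S}_d$ directly, observing that both zero and nonzero $\beta_i$ impose $u_i$ linear conditions on $g$; the resulting computations coincide.
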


\begin{proof} Without loss of generality we can assume that $\beta_1, \dots, \beta_\ell$ are not zero and that $\beta_{\ell+1}=\cdots=\beta_n=0$.
 We have that
 \begin{align} \label{generalvvalues}
&|\mathcal{F}_d^v(\alpha_{1}, \dots, \alpha_{n},\beta_{1}, \dots, \beta_{n})| \nonumber \\ &=\sum_{{\deg P_{i,j}=r_i, \mbox{\small all different}}\atop{ P_{i,j}\not = m_{\alpha}}} \left|\left\{g_1 \in \mathcal{S}_{d-\sum_{j=\ell+1}^nu_j}\, : \, (g_1,h)=1, \frac{g_1(\alpha_i)\prod_{j=\ell+1}^n m_{\alpha_j}(\alpha_i)}{h(\alpha_i)  }= \beta_i, \, 1\leq i \leq \ell\right\}\right|.
 \end{align}
 Notice that $\beta_i^{-1}\in \F_{q^{u_i}}^*$ for $1\leq i \leq \ell$.  By Lemma \ref{the-one-before},
 \begin{eqnarray*}&&\left|\left\{g_1 \in \mathcal{S}_{d-\sum_{j=\ell+1}^nu_j}\, : \, (g_1,h)=1, \frac{h(\alpha_i)}{g_1(\alpha_i)\prod_{j=\ell+1}^n m_{\alpha_j}(\alpha_i)}=\beta_i^{-1}\, 1\leq i \leq \ell \right\}\right|\\
&=& q^{d+1-\sum_{i=1}^n u_i} \prod_{(P) | (h)} (1-|P|^{-1}) + O\left(q^{\varepsilon d}\right)\\
 &=& q^{d+1-\sum_{i=1}^n u_i} \prod_{j=1}^m(1-q^{-r_j})^{\ell_j} + O\left(q^{\varepsilon d}\right).\\
 \end{eqnarray*}

 On the other hand, $|\{\deg P_{i,j}=r_i, \mbox{all different},P_{i,j}\not = m_{\alpha}\}|$ is a product of binomials of the form
 \[\binom{\pi(r_i)-s_i}{\ell_i},\]
 where $s_i$ corresponds to the number of $u_j$'s that equal the particular $r_i$.

 This gives that
 \[\frac{|\{\deg P_{i,j}=r_i, \mbox{\small all different},P_{i,j}\not = m_{\alpha}\}|}{|\{\deg P_{i,j}=r_i, \mbox{\small all different}\}|}\]
 is a product of terms of the form
 \[(1-\tau(r_i,\ell_i;u_1,\dots,u_n))=\frac{\binom{\pi(r_i)-s_i}{\ell_i}}{\binom{\pi(r_i)}{\ell_i}}=\frac{(\pi(r_i)-\ell_i)(\pi(r_i)-\ell_i-1)\cdots(\pi(r_i)-\ell_i-s_i+1)}{\pi(r_i)(\pi(r_i)-1)\cdots(\pi(r_i)-s_i+1)}.\]

 By dividing equation \eqref{generalvvalues} by equation \eqref{generalv}, we get
 \[\frac{|\mathcal{F}_d^v(\alpha_1, \dots, \alpha_n,\beta_1, \dots, \beta_n)|}{|\mathcal{F}_d^v|}=q^{-\sum_{i=1}^n u_i} \prod_{i=1}^m (1-\tau(r_i,\ell_i;u_1,\dots,u_n)) + O(q^{(\varepsilon-1) d}),\]
 where the constant satisfies the desired properties.
\end{proof}

\section{Beurling--Selberg functions} \label{trigapprox}

In this section we start the development of the tools needed to prove Theorem \ref{zeroesthm}. By the functional equation, the conjugate of a root of $Z_{C_f}(u)$ is also a root so we can restrict to considering symmetric intervals. Let  $0<\beta<1$ and set $\mathcal I = [-\beta/2, \beta/2] \subset [-1/2,1/2)$.
Our goal is to estimate the quantity
\[N_\mathcal{I}(f, \psi) := \#\left\{1\leq j \leq\frac{2\mathfrak g}{p-1}:\,\theta_j(f, \psi) \in \mathcal{I}\right\}=\sum_{j=1}^{2\mathfrak g/(p-1)} \chi_{\mathcal{I}}(\theta_j(f, \psi)),\]
where $\chi_{\mathcal{I}}$ denotes the characteristic function of ${\mathcal{I}}$. We are going to approximate $\chi_{\mathcal{I}}$ with Beurling--Selberg polynomials $I_K^\pm$.

In what follows, we use the standard notation $e(x) := e^{2 \pi i x}$. Let $K$ be a positive integer,  and let  $h(\theta) = \sum_{{|k|}\leq K} a_k e(k\theta)$ be a trigonometric polynomial.
Then, the coefficients $a_k$ are given by the Fourier transform
$$a_k = \widehat{h}(k) = \int_{-1/2}^{1/2} h(\theta) e (- k \theta) d\theta.$$

Here is a list of a series of useful properties of the Beurling--Selberg polynomials (see \cite{M}, ch 1.2) that will be used in this paper.

\begin{itemize}
 \item[(a)] The $I^\pm_K$ are trigonometric polynomials of degree $\leq K$,
i.e., $$I_K^{\pm}(x) = \sum_{|k| \leq K} \widehat{I}_K^{\pm}(k) e(k x).$$
\item[(b)] The Beurling--Selberg polynomials yield upper and lower bounds for the characteristic function:
\begin{eqnarray*}  I_K^- \leq \chi_{\mathcal{I}}\leq I_K^+ .
\end{eqnarray*}
\item[{(c)}] The integral of Beurling--Selberg polynomials approximates the length of the interval: \[\int_{-1/2}^{1/2} I_K^\pm(x) dx =\int_{-1/2}^{1/2} \chi_{\mathcal{I}}(x) dx \pm \frac{1}{K+1}
    = | \mathcal{I} | \pm \frac{1}{K+1}.
    \]
\item[{(d)}] The $I^\pm_K$ are even (because the interval $\mathcal{I}$
is symmetric about the origin). Therefore the Fourier coefficients
are also even, i.e. $\widehat{I}_K^{\pm}(-k) = \widehat{I}_K^{\pm}(k)$ for
$|k| \leq K$.
\item[{(e)}] The nonzero Fourier coefficients of the Beurling--Selberg polynomials approximate those of the characteristic function:
\[|\widehat{I}_K^\pm (k) - \widehat{\chi}_{
 \mathcal{I}}(k) | \leq \frac{1}{K+1}
\quad \Longrightarrow \quad \widehat{I}^\pm_K(k)=\frac{\sin (\pi k|\mathcal{I}|)}{\pi k} + O
\left( \frac{1}{K+1}\right), \quad k \geq 1.
\]
Therefore we obtain the following bound:
\[|\widehat{I}_K^\pm (k)| \leq \frac{1}{K+1} +\min \left \{|\mathcal{I}|, \frac{\pi}{|k|}\right \}, \quad 0<|k|\leq K.\]
\end{itemize}

We now  list some results that will be useful in future sections.
\begin{prop}(Proposition 4.1, \cite{fr}) \label{propFR} For $K\geq 1$ such that $K|\mathcal{I}|>1$, we have
\begin{eqnarray*}
\sum_{k \geq 1} \widehat{I}_K^\pm (2k)&=&O(1),\\
\sum_{k \geq 1} \widehat{I}_K^\pm (k)^2 k&=&\frac{1}{2\pi^2} \log (K|\mathcal{I}|) +O(1),\\
\sum_{k \geq 1} \widehat{I}_K^+ (k)\widehat{I}_K^- (k) k&=&\frac{1}{2\pi^2} \log (K|\mathcal{I}|) +O(1).\\
\end{eqnarray*}
\end{prop}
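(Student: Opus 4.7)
The plan is to reduce each of the three sums to an explicit trigonometric series in $|\mathcal{I}|$ by exploiting the approximation formula stated as property (e), namely
\[
\widehat{I}_K^\pm(k)=\frac{\sin(\pi k|\mathcal{I}|)}{\pi k}+O\!\left(\frac{1}{K+1}\right),\qquad 1\le k,
\]
together with the finer bound
$|\widehat{I}_K^\pm(k)|\le \tfrac{1}{K+1}+\min\{|\mathcal{I}|,\pi/|k|\}$. The first observation is that $I_K^\pm$ is a trigonometric polynomial of degree $\le K$ by property (a), so every sum in the statement is actually a finite sum $\sum_{k=1}^{K}$ (or $\sum_{k=1}^{K/2}$ in the first case), which legitimizes breaking each Fourier coefficient into its main term plus an $O(1/(K+1))$ piece.

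For the first identity, I would substitute the approximation and write
\[
\sum_{k=1}^{\lfloor K/2\rfloor}\widehat{I}_K^\pm(2k)=\sum_{k=1}^{\lfloor K/2\rfloor}\frac{\sin(2\pi k|\mathcal{I}|)}{2\pi k}+O\!\left(\frac{K/2}{K+1}\right).
\]
The remainder is $O(1)$, and the main term is a partial sum of the Fourier series of the sawtooth function at $|\mathcal{I}|$; by Dirichlet's theorem on uniform boundedness of partial sums of $\sum k^{-1}\sin(2\pi k x)$ for $x\in(0,1)$, this partial sum is bounded in $K$. This gives $O(1)$.

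For the second and third identities, I would expand the product $\widehat{I}_K^\pm(k)\widehat{I}_K^\pm(k)$ (respectively $\widehat{I}_K^+(k)\widehat{I}_K^-(k)$) as
\[
\Bigl(\tfrac{\sin(\pi k|\mathcal{I}|)}{\pi k}\Bigr)^{2}+O\!\left(\frac{1}{K+1}\cdot\frac{|\sin(\pi k|\mathcal{I}|)|}{\pi k}\right)+O\!\left(\frac{1}{(K+1)^{2}}\right).
\]
Multiplying by $k$ and summing, the main contribution is
\[
\frac{1}{\pi^{2}}\sum_{k=1}^{K}\frac{\sin^{2}(\pi k|\mathcal{I}|)}{k}=\frac{1}{2\pi^{2}}\sum_{k=1}^{K}\frac{1-\cos(2\pi k|\mathcal{I}|)}{k}=\frac{1}{2\pi^{2}}\log K-\frac{1}{2\pi^{2}}\sum_{k=1}^{K}\frac{\cos(2\pi k|\mathcal{I}|)}{k}+O(1),
\]
and the classical identity $\sum_{k\ge 1}k^{-1}\cos(2\pi k x)=-\log|2\sin(\pi x)|$, together with the hypothesis $K|\mathcal{I}|>1$ that controls the tail of the cosine series, yields $-\log|2\sin(\pi|\mathcal{I}|)|+O(1)=\log(1/|\mathcal{I}|)+O(1)$. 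Combining gives the desired $\tfrac{1}{2\pi^{2}}\log(K|\mathcal{I}|)+O(1)$.

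The step that requires the most care, and which I expect to be the technical heart of the argument, is showing that the error terms $\sum_{k=1}^{K}k\cdot O(1/(K+1))\cdot|\widehat{I}_K^\pm(k)|$ and $\sum_{k=1}^{K}k\cdot O(1/(K+1)^{2})$ are $O(1)$. For the latter this is immediate. For the former I would use the sharper bound on $|\widehat{I}_K^\pm(k)|$ and split the sum at $k_{0}=\lfloor 1/|\mathcal{I}|\rfloor$: for $k\le k_{0}$ the minimum in the bound is $|\mathcal{I}|$, giving $\sum_{k\le k_{0}}k|\mathcal{I}|/(K+1)\ll k_{0}^{2}|\mathcal{I}|/K\ll 1/(K|\mathcal{I}|)=O(1)$ thanks to $K|\mathcal{I}|>1$; and for $k>k_{0}$ the minimum is $\pi/k$, giving $\sum_{k_{0}<k\le K}\pi/(K+1)=O(1)$. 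The same decomposition handles the cross product in the third identity since both $\widehat{I}_K^\pm$ share the same main term and the same bound.
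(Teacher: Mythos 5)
The paper does not prove this proposition; it simply cites Proposition~4.1 of Faifman--Rudnick \cite{fr}, so there is no in-paper argument to compare against. Your reconstruction is correct and follows the standard route one would expect that reference to take: expand each coefficient via property~(e) as the sawtooth Fourier coefficient $\sin(\pi k|\mathcal{I}|)/(\pi k)$ plus an $O(1/(K+1))$ error, then control the resulting trigonometric sums. The first sum is a uniformly bounded partial sum of the sawtooth series plus a harmless $O(K/(K+1))$ error; in the second and third the diagonal main term reduces, after $\sin^2\theta=(1-\cos 2\theta)/2$, to the harmonic sum $\tfrac{1}{2\pi^2}\log K$ minus the truncated $\sum k^{-1}\cos(2\pi k|\mathcal{I}|)$, whose completed value $-\log|2\sin(\pi|\mathcal{I}|)|$ contributes $\log(1/|\mathcal{I}|)+O(1)$ (valid in the regime $|\mathcal{I}|$ fixed or $|\mathcal{I}|\to 0$, which is the only regime the paper uses), while the tail is $O(1/(K|\mathcal{I}|))=O(1)$ by partial summation and the hypothesis $K|\mathcal{I}|>1$. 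Your split at $k_0=\lfloor 1/|\mathcal{I}|\rfloor$ for the cross and error terms, using the two regimes $|\mathcal{I}|$ and $\pi/k$ of the bound on $|\widehat{I}_K^\pm(k)|$, is exactly the right way to see those are $O(1)$; the only small thing you leave implicit is the pure $1/(K+1)^2$ contribution to $|\widehat{I}_K^\pm(k)|^2$, but that is immediate as you say. The argument that both $\widehat{I}_K^+$ and $\widehat{I}_K^-$ share the same main term, so the mixed product is handled identically, is also correct. In short: this is a sound proof, and it is, as far as one can tell, essentially the same approach taken in the cited reference.
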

We remark that for a given $K$ the above sums are actually finite, since the Beurling--Selberg polynomials $I_K^\pm$ have degree at most $K$.
We will also need the following estimates.
\begin{prop}(Proposition 5.2, \cite{BDFLS}) \label{propmanysums} For $\alpha_1,\dots, \alpha_r, \gamma_1,\dots, \gamma_r>0$, and $\beta_1,\dots,\beta_r \in \mathbb{R}$, we have,
 \[\sum_{k_1,\dots,k_r \geq 1} {\widehat{I}_K^\pm(k_1)}^{\alpha_1} \dots {\widehat{I}_K^\pm(k_r)}^{\alpha_r}k_1^{\beta_1}\dots k_r^{\beta_r}q^{-\gamma_1k_1-\dots-\gamma_r k_r}=O(1).\]
For $\alpha_1,\alpha_2,\gamma >0$, and $\beta \in \mathbb{R}$,
\[\sum_{k\geq1} {\widehat{I}_K^\pm(k)}^{\alpha_1}  {\widehat{I}_K^\pm(2k)}^{\alpha_2} k^\beta q^{-\gamma k}=O(1).\]
\end{prop}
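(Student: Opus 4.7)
The plan is to observe that both estimates reduce, via termwise upper bounds on $\widehat{I}_K^\pm(k)$, to one-dimensional convergent series with exponential decay. The essential input is property (e) of the Beurling--Selberg polynomials, which yields the uniform bound
\[
\bigl|\widehat{I}_K^\pm(k)\bigr| \;\leq\; \frac{1}{K+1} + \min\!\left\{|\mathcal{I}|,\,\frac{\pi}{|k|}\right\} \;\leq\; 1 + |\mathcal{I}|
\]
for every $k$ with $1 \leq |k| \leq K$, and $\widehat{I}_K^\pm(k) = 0$ for $|k| > K$ since $I_K^\pm$ has degree at most $K$. In particular $|\widehat{I}_K^\pm(k)|$ is $O(1)$ uniformly in $k$ and $K$, and the same holds for $\widehat{I}_K^\pm(2k)$.

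For the first sum, I would apply this bound to each factor $|\widehat{I}_K^\pm(k_i)|^{\alpha_i}$, absorbing the resulting constant (which depends only on the $\alpha_i$ and $|\mathcal{I}|$) into the implied constant. The sum is then bounded above by a product
\[
\prod_{i=1}^{r} \sum_{k_i \geq 1} k_i^{\beta_i}\, q^{-\gamma_i k_i},
\]
and each factor is a convergent series because $\gamma_i > 0$ forces the exponential decay to dominate any polynomial growth $k_i^{\beta_i}$ (for $\beta_i < 0$ convergence is immediate, and for $\beta_i \geq 0$ one has the standard estimate $\sum_{k\geq 1} k^{\beta_i} q^{-\gamma_i k} = O_{\beta_i, \gamma_i}(1)$). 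This produces the desired $O(1)$ bound, uniform in $K$.

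For the second sum, the same idea applies: bound $|\widehat{I}_K^\pm(k)|^{\alpha_1}|\widehat{I}_K^\pm(2k)|^{\alpha_2}$ by a constant depending only on $\alpha_1, \alpha_2, |\mathcal{I}|$, and reduce to the single convergent series $\sum_{k\geq 1} k^\beta q^{-\gamma k} = O(1)$. The fact that $\widehat{I}_K^\pm(2k)$ vanishes outside $2k \leq K$ causes no trouble since we are only seeking an upper bound.

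There is no real obstacle here — the whole point is that the exponential factors $q^{-\gamma_i k_i}$ make the sums converge absolutely and uniformly in $K$, while the Beurling--Selberg Fourier coefficients contribute only a bounded multiplicative factor. The only subtlety worth double-checking is making sure that the implied constants do not secretly depend on $K$: they do not, precisely because the bound on $|\widehat{I}_K^\pm(k)|$ from property (e) is uniform in $K$, and the geometric-type series $\sum_k k^\beta q^{-\gamma k}$ is summed over $k \geq 1$ without reference to $K$.
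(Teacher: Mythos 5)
Your proof is correct and is the natural (and essentially only) way to prove this elementary estimate: bound the Fourier coefficients uniformly by a constant using property (e), factor the $r$-fold sum into a product of one-dimensional sums, and observe that $\sum_{k\ge 1} k^{\beta}q^{-\gamma k}$ converges for any fixed $\gamma>0$. The paper merely cites Proposition 5.2 of \cite{BDFLS} without reproducing the argument, and that proof proceeds in the same straightforward manner. One small point worth making explicit: since $0<|\mathcal I|<1$, the bound $|\widehat{I}_K^\pm(k)|\le 1+|\mathcal I|<2$ is uniform not only in $K$ but also in $|\mathcal I|$, so the implied constant depends only on $r$, the $\alpha_i$, $\beta_i$, $\gamma_i$, and $q$, as required.
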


\section{Set-up for the distribution of the zeroes}

We state here an explicit formula that will be used to relate $L(u,f,\psi)$ to the Beurling--Selberg polynomials. Recall that $2\mathfrak g=(p-1)(\Delta-1)$.

\begin{lem}\label{Explicit-Formula}(\cite{BDFLS}, Lemma 3.1)
Let $h(\theta) = \sum_{{|k|}\leq K}\widehat{h}(k)e(k\theta)$  be a trigonometric polynomial. Let $\theta_j(f, \psi)$ be
the eigenangles of the $L$-function $L(u,f,\psi)$.
Then we have
\begin{equation}
\sum_{j=1}^{\Delta-1}h(\thet) = (\Delta-1)\widehat{h}(0) - \sum_{k=1}^{K}\frac{\widehat{h}(k)S_k(f,\psi) + \widehat{h}(-k)S_k(f,\overline{\psi})}{q^{k/2}},
\end{equation}
where
\[
S_k(f, \psi) = \sum_{x\in \mathbb P^1( \mathbb F_{q^k}) \atop f(x) \neq \infty} \psi(\tr_k(f(x))).
\]
\end{lem}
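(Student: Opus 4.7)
The plan is to derive the explicit formula from the Newton power-sum identities relating the Frobenius eigenvalues $\alp=\sqrt{q}\,e(\thet)$ of $L(u,f,\psi)$ to the exponential sums $S_k(f,\psi)$, and then to expand $h$ in its Fourier series.

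First, I would take the logarithm (as a formal power series in $u$) of both sides of
\[
\prod_{j=1}^{\Delta-1}(1-\alp u)=L(u,f,\psi)=\exp\!\left(\sum_{k\geq 1}S_k(f,\psi)\,\frac{u^k}{k}\right),
\]
use $-\log(1-\alp u)=\sum_{k\geq 1}\alp^k u^k/k$, and match coefficients of $u^k$. This yields the Newton identity
\[
S_k(f,\psi)=-\sum_{j=1}^{\Delta-1}\alp^{k} = -q^{k/2}\sum_{j=1}^{\Delta-1}e(k\thet),
\]
so that $\sum_{j} e(k\thet)=-S_k(f,\psi)/q^{k/2}$ for every $k\geq 1$.

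Second, I would handle the negative Fourier modes. Because $\psi$ takes values in roots of unity and $\tr_k(f(x))\in\F_p$, one has $\overline{\psi(a)}=\psi(-a)=\bar\psi(a)$, which immediately gives $\overline{S_k(f,\psi)}=S_k(f,\bar\psi)$. Conjugating the Newton identity (and using $\overline{e(k\thet)}=e(-k\thet)$) produces the companion identity
\[
\sum_{j=1}^{\Delta-1}e(-k\thet)=-\frac{S_k(f,\bar\psi)}{q^{k/2}}.
\]

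Finally, I would substitute the Fourier expansion $h(\theta)=\sum_{|k|\leq K}\widehat{h}(k)e(k\theta)$ into $\sum_{j} h(\thet)$, swap the order of summation, and separate the $k=0$, $k>0$, $k<0$ contributions to obtain
\[
\sum_{j=1}^{\Delta-1}h(\thet) = (\Delta-1)\widehat{h}(0) + \sum_{k=1}^{K}\widehat{h}(k)\sum_{j} e(k\thet) + \sum_{k=1}^{K}\widehat{h}(-k)\sum_{j} e(-k\thet),
\]
and inserting the two sum identities derived above recovers the stated formula. The argument is essentially mechanical; the only small check is the conjugation relation $\overline{S_k(f,\psi)}=S_k(f,\bar\psi)$, and there is no genuine obstacle, consistent with the fact that the lemma is attributed to \cite{BDFLS}.
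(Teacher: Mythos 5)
Your derivation is correct: the Newton power-sum identity $S_k(f,\psi)=-\sum_j \alpha_j(f,\psi)^k$ obtained by comparing the exponential generating series \eqref{Euler-product} with the factorization $\prod_j(1-\alpha_j u)$, together with $\overline{S_k(f,\psi)}=S_k(f,\bar\psi)$ and the Fourier expansion of $h$, gives exactly the stated formula. The paper does not reprove this lemma (it is cited from \cite{BDFLS}), but your argument is the standard one and matches the referenced proof in approach.
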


We use the Beurling--Selberg approximation of the characteristic function of the interval $\mathcal I$ to rewrite $N_{\mathcal{I}}(f, \psi)$ and $N_{\mathcal{I}}(C_f)$ where
$f$ varies over one of the families $\mathcal{F}_d$.
By Property (b) of the Beurling--Selberg polynomials, we have
$$
\sum_{j=1}^{\Delta-1} I_K^{-}(\theta_j(f,\psi))  \leq N_\mathcal{I}(f, \psi) \leq \sum_{j=1}^{\Delta-1} I_K^{+}(\theta_j(f,\psi)) ,
$$
and using the explicit formula of Lemma \ref{Explicit-Formula} and Property (c), we have
\begin{eqnarray*}
\sum_{j=1}^{\Delta-1} I_K^{\pm}(\theta_j(f,\psi))  = (\Delta-1) |\mathcal{I}| - S^{\pm}(K, f, \psi) \pm \frac{\Delta-1}{K+1} \\
\end{eqnarray*}
where
\begin{eqnarray} 
S^{\pm}(K, f,\psi) :=  \sum_{k=1}^K \frac{\widehat{I}^\pm_K(k)S_k(f, \psi)+\widehat{I}^\pm_K(-k)S_k(f, \bar{\psi})}{q^{k/2}} . \end{eqnarray}

This gives
\begin{eqnarray}
\label{T-estimate-1}
- S^{-}(K, f,\psi) -\frac{\Delta-1}{K+1}  \leq N_\mathcal{I}(f, \psi) - (\Delta-1)|\mathcal{I}| \leq - S^{+}(K, f,\psi) + \frac{\Delta-1}{K+1},
\end{eqnarray}
and
\begin{eqnarray}
\label{T-estimate-allzeroes-1}
- \sum_{h=1}^{p-1} S^{-}(K, f,\psi^h) - \frac{2\mathfrak g}{K+1}  &\leq& N_\mathcal{I}(C_f) - 2\mathfrak g|\mathcal{I}|  \leq - \sum_{h=1}^{p-1} S^{+}(K, f,\psi^h) + \frac{2\mathfrak g}{K+1}.
\end{eqnarray}

In the next section we are going to compute the moments
\begin{eqnarray*}
\frac{1}{|\mathcal{F}_d|}
\sum_{f \in \mathcal{F}_d} S^\pm(K, f, \psi^h)^n
\quad \mbox{and} \quad
\frac{1}{|\mathcal{F}_d|} \sum_{f \in \mathcal{F}_d} S^\pm(K, C_f)^n
\end{eqnarray*}
where
\begin{eqnarray} \label{def-SCf}
S^\pm(K, C_f)^n =
\sum_{h_1, \dots, h_n=1}^{p-1} S^\pm(K, f, \psi^{h_1}) \dots S^\pm(K, f, \psi^{h_n}).
\end{eqnarray}

We will show that they approach the Gaussian moments when properly normalized. We will then use this result to show that
\[ \frac{N_{\mathcal{I}}(C_f)
- 2\mathfrak{g} |\mathcal{I}|}{\sqrt{\frac{2(p-1)}{\pi^2} \log(\mathfrak{g} |\mathcal{I}|)}}\]
converges to a normal distribution as $\mathfrak{g} \rightarrow \infty$ since
it converges in mean square to \[\frac{S^{\pm}(K, C_f)}{{\sqrt{\frac{2(p-1)}{\pi^2} \log(\mathfrak{g} |\mathcal{I}|)}}}.\]

\section{Moments}

Our goal is to compute the moments of $S^\pm(K, C_f)$ when $f$ varies in any of the families of curves ${\mathcal F}_d^{\ord}$, ${\mathcal F}_d^{\full}$, and ${\mathcal F}_d^{v}$.

\begin{defn} \label{definitionE}
Let
\begin{eqnarray*}
E_{\FF_d}(u) = \begin{cases} (1 + q^{-u} - q^{-2u})^{-1} & \mathcal{F}_d = {\mathcal F}_d^{\ord},\\
(1+q^{-u})^{-1} & \mathcal{F}_d = {\mathcal F}_d^{\full},\\
\displaystyle \frac{\pi(r_i)-\ell_i}{\pi(r_i)}  & \mbox{$\mathcal{F}_d = {\mathcal F}_d^{v}$ and $u=r_i$ for some $i$}, \\
1 & \mbox{$\mathcal{F}_d = {\mathcal F}_d^{v}$ and $u \neq r_i$ for any $i$}.
\end{cases}
\end{eqnarray*}

More generally, we have
\begin{eqnarray*}
E_{\FF_d}(u_1,\dots,u_n) = \begin{cases} \displaystyle\prod_{i=1}^n E_{\FF_d}(u_i) & \mathcal{F}_d = {\mathcal F}_d^{\ord}, {\mathcal F}_d^{\full},\\
\displaystyle \prod_{i=1}^m (1-\tau(r_i,\ell_i;u_1,\dots,u_n)) & \mbox{$\mathcal{F}_d = {\mathcal F}_d^{v}$},\\
\end{cases}
\end{eqnarray*}
where $\tau(r_i,\ell_i;u_1,\dots,u_n)$ is as defined in Proposition \ref{vmulti}.
\end{defn}

\begin{rem}\label{Eestimate} Let $\FF_d$ be any one of the families considered. Then
 \[E_{\FF_d}(u)=1+O\left(uq^{-u}\right).\]
The estimate can be improved to $E_{\FF_d}(u)=1+O\left(q^{-u}\right)$ for ${\mathcal F}_d^{\ord}$ and ${\mathcal F}_d^{\full}$. In the case of ${\mathcal F}_d^{v}$, we are assuming that the $\ell_i$
are fixed constants and using the estimate  $\pi(m)=\frac{q^m}{m}+O\left(\frac{q^{m/2}}{m}\right)$ (see \cite{rosen}, Theorem 2.2).

In addition, we have that
\[E_{\FF_d}(u_1,\dots,u_n) \ll 1.\]
\end{rem}

From now on we will use the notation $\alpha_1\sim \alpha_2$ to indicate that $\alpha_1$ and $\alpha_2$ are Galois conjugate, and $\alpha_1 \not \sim \alpha_2$ for the opposite statement.

Then, for all the families under consideration we have the following result.

\begin{lem} \label{formulasforvalues} Let $\alpha \in \PP^1(\F_{q^k})$ of degree $u$ over $\F_q$. Let $\beta \in \mathbb F_{q^{u}}$.
Let $\mathcal{F}_d$ be any of the families under consideration. Then
\begin{eqnarray} \label{onepoint}
\frac{|\mathcal{F}_d(\alpha,  \beta)|}{|\mathcal{F}_d|} &=&  \frac{|\mathcal{F}_d(\alpha,  0)|}{|\mathcal{F}_d|}
= \frac{E_{\FF_d}(u)}{q^{u}} + O \left( q^{-d/2} \right).
\end{eqnarray}
Let $\alpha_1, \alpha_2 \in \PP^1(\F_{q^k})$ of degrees $u_1, u_2$ respectively over $\mathbb F_q$.  Let $\beta_1 \in \mathbb F_{q^{u_1}}$, $\beta_2 \in \mathbb F_{q^{u_2}}$.
Let $\mathcal{F}_d$ be any of the families under consideration. Then, if $\alpha_1 \not \sim \alpha_2$,
\begin{eqnarray} \label{twopoints}
\frac{|\mathcal{F}_d(\alpha_1, \alpha_2, \beta_1, \beta_2)|}{|\mathcal{F}_d|}
&=& \frac{E_{\FF_d}(u_1, u_2)}{q^{u_1+u_2}} + O\left(q^{-d/2}\right),
\end{eqnarray}
where $E_{\FF_d} (u_1, u_2)$ does not depend on the values of $\beta_1, \beta_2$.

If $\alpha_1 \sim \alpha_2$, and $\beta_1 \sim \beta_2$ by the same automorphism,
\begin{eqnarray} \label{twopointsconj}
\frac{|\mathcal{F}_d(\alpha_1, \alpha_2, \beta_1, \beta_2)|}{|\mathcal{F}_d|}
&=& \frac{|\mathcal{F}_d(\alpha_1, \beta_1)|}{|\mathcal{F}_d|}=  \frac{E_{\FF_d}(u_1)}{q^{u_1}} + O\left(q^{-d/2}\right).
\end{eqnarray}
Otherwise, we get zero.

Let $\alpha_1, \dots, \alpha_n \in \PP^1(\F_{q^k})$ of degrees $u_1, \dots, u_n$ over $\F_q$ and let $\beta_i \in \F_{q^{u_i}}$ for $1\leq i \leq n$.

If none of the $\alpha_i$ are conjugate to each other.  Then
\begin{eqnarray} \label{npoints}
\frac{|\mathcal{F}_d(\alpha_1, \dots, \alpha_n, \beta_1, \dots \beta_n)|}{|\mathcal{F}_d|}
&=& \frac{E_{\FF_d}(u_1, \dots, u_n)}{q^{u_1+ \dots + u_n}} + O(q^{-d/2}),
\end{eqnarray}
where $E_{\FF_d} (u_1, \dots, u_n)$ does not depend on the values of $\beta_1, \dots, \beta_n$.

If some of the $\alpha_i$'s are conjugate to others, then we get zero, unless the corresponding $\beta_i$'s are conjugate by the same automorphisms and in that case we get formula \eqref{npoints},
where the $u_i$'s correspond to the degrees for each of the {\em different} conjugacy classes of the $\alpha_i$'s.

\end{lem}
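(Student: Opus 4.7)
The plan is to assemble the lemma from the three probability estimates already established for each family, and then separately deal with the effect of Galois conjugation among the chosen points.

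For the one-point formula \eqref{onepoint}, I would argue that both the ordinary and full cases follow directly from Propositions \ref{prop:5.6/4.3} and \ref{quotientfull} after noting that the definition of $E_{\FF_d}(u)$ reproduces the main terms $(1+q^{-u}-q^{-2u})^{-1}$ and $(1+q^{-u})^{-1}$ appearing there. For $\mathcal{F}_d^v$, the main term $E_{\FF_d}(u)$ equals $1$ when $u$ is not one of the $r_i$ and equals $(\pi(r_i)-\ell_i)/\pi(r_i)$ when $u = r_i$, which is exactly the content of Proposition \ref{beta=11}; Lemma \ref{beta=0} is what lets us drop the dependence on the particular value of $\beta \in \F_{q^u}$. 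The multi-point formula \eqref{npoints} in the non-conjugate case is similarly a direct restatement of Propositions \ref{nalpha}, \ref{fullalphas}, and \ref{vmulti}, with $E_{\FF_d}(u_1,\dots,u_n)$ defined precisely so as to absorb the product of local factors appearing in each of those propositions.

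The step that requires genuine argument, and which will be the main point of the proof, is the case where some of the $\alpha_i$ lie in the same $\Gal(\overline{\F_q}/\F_q)$-orbit. If $\alpha_i \sim \alpha_j$ via some Frobenius power $\sigma$, then for any $(g,h) \in \mathcal{F}_d$ (with $g,h \in \mathcal{S}_d$ defined over $\F_q$) we have $g(\alpha_j) = \sigma(g(\alpha_i))$ and $h(\alpha_j) = \sigma(h(\alpha_i))$, hence $\frac{g(\alpha_j)}{h(\alpha_j)} = \sigma\!\left(\frac{g(\alpha_i)}{h(\alpha_i)}\right)$. Consequently the condition at $\alpha_j$ is automatically forced once the condition at $\alpha_i$ is imposed: if $\beta_j = \sigma(\beta_i)$ the constraint is redundant, and if $\beta_j \ne \sigma(\beta_i)$ the set $\mathcal{F}_d(\alpha_1,\dots,\alpha_n,\beta_1,\dots,\beta_n)$ is empty. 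Applying this reduction repeatedly, I would collapse the list $(\alpha_1,\dots,\alpha_n)$ down to a maximal subset of pairwise non-conjugate representatives with degrees corresponding to the distinct conjugacy classes, and then invoke the non-conjugate case established above. Formulas \eqref{twopoints} and \eqref{twopointsconj} are just the $n=2$ specializations of this reduction.

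The only aspect that is not completely automatic is checking that the implicit constants in the error terms from Propositions \ref{nalpha}, \ref{fullalphas}, and \ref{vmulti} are uniform enough to absorb the $\varepsilon$'s into the stated $O(q^{-d/2})$; this is a routine adjustment of the exponent in the error (the $(3/2+\varepsilon)d - 2d = (-1/2+\varepsilon)d$ coming from dividing by the size of the family), and causes no difficulty since $n$ is treated as a fixed constant throughout. No delicate new estimate is needed: the entire lemma is a bookkeeping consolidation of the three preceding sections, with the Galois-conjugation observation being the only substantive new ingredient.
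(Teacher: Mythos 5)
Your proposal is correct and takes essentially the same route as the paper: both simply consolidate the earlier estimates, citing Propositions \ref{nalpha}, \ref{prop:5.6/4.3}, \ref{fullalphas}, \ref{quotientfull}, \ref{beta=11}, and \ref{vmulti}. You go somewhat further than the paper's one-line proof by explicitly writing out the Galois-conjugation reduction (that $g(\sigma\alpha)=\sigma(g(\alpha))$ for $g$ with $\F_q$-coefficients makes the condition at a conjugate point either redundant or contradictory), a step the paper leaves implicit since all the cited propositions assume pairwise non-conjugate $\alpha_i$.
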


\begin{proof} This follows from Propositions \ref{nalpha}, \ref{prop:5.6/4.3}, \ref{fullalphas}, \ref{quotientfull}, \ref{beta=11}, and \ref{vmulti}. \end{proof}

We recall that for a family $\mathcal{F}$, a function $G$ depending on $f$, and a vector ${\bm \alpha} = (\alpha_1, \dots, \alpha_n)$, we have the notation
\begin{eqnarray*}
\left< G(f) \right>_{\FF} &:=& \frac{1}{| \mathcal{F}|}
\sum_{{f \in \mathcal{F}}} G(f), \\
\left< G(f) \right>_{\FF, {\bm \alpha}} &:=& \frac{1}{| \mathcal{F}|}
\sum_{{f \in \mathcal{F}}\atop
{f(\alpha_i) \neq \infty, 1 \leq i \leq n}} G(f) .
\end{eqnarray*}

The main idea in the computations of moments
is that if we sum the value of a non-trivial additive character $\psi$ evaluated
at a linear combination of the traces $\tr_{u_i}(\beta_i)$
over all $\beta_i \in \F_{q^{u_i}}$ for $1 \leq i \leq s$,
then the sum will be 0 unless each coefficient is divisible by $p$.
\begin{lem}  \label{keypoint}
Let $m_1, \dots, m_s \in
{\mathbb Z}$, and $\psi$ a non-trivial additive
character of $\F_p$. Then,
\begin{eqnarray*}
\sum_{\beta_i \in \F_{q^{u_i}} \; 1 \leq i \leq s} \psi( m_1 \tr_{u_1}(\beta_1) + \dots + m_s  \tr_{u_s}(\beta_s))
= \begin{cases} q^{u_1+\dots+u_s} & p \mid m_i \; \mbox{for} \; 1 \leq i \leq s, \\\\
0 & \mbox{otherwise}. \end{cases} \end{eqnarray*}
\end{lem}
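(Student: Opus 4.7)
The plan is to reduce the multi-variable sum to a product of single-variable character sums and then apply orthogonality of additive characters on each factor.

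First, I would observe that the exponent splits as a sum, so the sum factors as
\[
\sum_{\beta_i \in \F_{q^{u_i}},\; 1\leq i\leq s} \psi\!\left(\sum_{i=1}^s m_i \tr_{u_i}(\beta_i)\right) = \prod_{i=1}^s \left(\sum_{\beta_i \in \F_{q^{u_i}}} \psi(m_i \tr_{u_i}(\beta_i))\right).
\]
It therefore suffices to evaluate each individual factor
\[
T_i := \sum_{\beta \in \F_{q^{u_i}}} \psi(m_i \tr_{u_i}(\beta)).
\]

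Second, I would note that the map $\chi_i : \F_{q^{u_i}} \to \mathbb{C}^\times$ defined by $\chi_i(\beta) = \psi(m_i \tr_{u_i}(\beta))$ is itself an additive character of $\F_{q^{u_i}}$, since the absolute trace $\tr_{u_i} : \F_{q^{u_i}} \to \F_p$ is $\F_p$-linear and $\psi$ is a character of $\F_p$. By orthogonality of additive characters on the finite abelian group $(\F_{q^{u_i}}, +)$, we have $T_i = q^{u_i}$ if $\chi_i$ is trivial, and $T_i = 0$ otherwise.

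Third, I would determine precisely when $\chi_i$ is trivial. Since $\tr_{u_i} : \F_{q^{u_i}} \to \F_p$ is surjective (the absolute trace of any finite extension of $\F_p$ is surjective onto $\F_p$), the composition $m_i \tr_{u_i}$ vanishes identically on $\F_{q^{u_i}}$ if and only if the scalar $m_i \bmod p$ annihilates all of $\F_p$, i.e., $p \mid m_i$. In the other direction, if $p \nmid m_i$, then $m_i \tr_{u_i}$ is a nonzero $\F_p$-linear functional, so pairing with the non-trivial character $\psi$ gives a non-trivial character of $\F_{q^{u_i}}$, whence $T_i = 0$.

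Combining these observations, the full product is $\prod_i q^{u_i} = q^{u_1+\cdots+u_s}$ precisely when $p \mid m_i$ for every $i$, and vanishes as soon as one of the $m_i$ is not divisible by $p$. There is no serious obstacle here; this is a direct orthogonality argument, with the only subtlety being the (standard) surjectivity of the absolute trace used to identify the trivial character.
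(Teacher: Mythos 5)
Your argument is correct and is the standard one: factor the multi-variable sum into a product of one-dimensional sums, recognize each factor as a sum over the values of an additive character of $\F_{q^{u_i}}$, apply orthogonality, and use surjectivity of the absolute trace to characterize when that character is trivial. The paper in fact states Lemma \ref{keypoint} without any proof, treating it as a routine consequence of character orthogonality, so your write-up simply supplies the omitted (and expected) argument; there is no competing approach in the paper to compare against.
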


\subsection{First moment}

\begin{lem} \label{lem:avg1}Let $h$ be an integer such that $p\nmid h$, $e \in \{-1, 1\}$, and  $k>0$. Let $\alpha \in \F_{q^k}$ of degree $u$ over $\F_q$.
Let $\mathcal{F}_d$ be any of the families under consideration. We have,
\[\left< \psi(eh\tr_k f(\alpha))\right>_{\FF_d, \alpha}  = \begin{cases} E_{\FF_d}(u)+O\left(q^{u-d/2}\right) & p\mid \frac{k}{u},\\\\
O\left(q^{u-d/2}\right) & \textrm{otherwise.}
\end{cases} \]
\end{lem}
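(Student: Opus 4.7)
The plan is to evaluate the average by partitioning the family according to the value $\beta := f(\alpha)$ and then using Lemma \ref{formulasforvalues} to compute the resulting probabilities. Since $\alpha \in \F_{q^k}$ has degree $u$ over $\F_q$ and $f$ is defined over $\F_q$, every finite value $f(\alpha)$ lies in $\F_{q^u}$ (not merely in $\F_{q^k}$); restricting to $f(\alpha) \neq \infty$ thus amounts to summing $\beta$ over $\F_{q^u}$. This gives
\[
\left< \psi(eh\tr_k f(\alpha))\right>_{\FF_d, \alpha}
= \sum_{\beta \in \F_{q^u}} \frac{|\mathcal F_d(\alpha,\beta)|}{|\mathcal F_d|}\,\psi\bigl(eh\tr_k(\beta)\bigr),
\]
and by \eqref{onepoint} the inner probability equals $E_{\FF_d}(u) q^{-u} + O(q^{-d/2})$ uniformly in $\beta$.

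Next, since $\beta \in \F_{q^u}$ and $u \mid k$, I would use $\tr_k(\beta) = \frac{k}{u}\tr_u(\beta)$ and split into two cases according to whether $p$ divides $k/u$. If $p \mid k/u$, then $eh\tr_k(\beta) \equiv 0 \pmod p$ for every $\beta$, so $\psi(eh\tr_k(\beta))=1$ identically, and the sum collapses to
\[
\sum_{\beta \in \F_{q^u}} \frac{|\mathcal F_d(\alpha,\beta)|}{|\mathcal F_d|}
= q^u \left(\frac{E_{\FF_d}(u)}{q^u}+O(q^{-d/2})\right)
= E_{\FF_d}(u) + O(q^{u-d/2}),
\]
which is the first case of the lemma. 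If instead $p \nmid k/u$, then, because $p \nmid h$ and $e=\pm 1$, the integer $m := eh\cdot k/u$ is coprime to $p$, so Lemma \ref{keypoint} yields $\sum_{\beta\in\F_{q^u}}\psi(m\tr_u(\beta))=0$; the main term therefore vanishes and what remains is the error
\[
\sum_{\beta\in\F_{q^u}} O(q^{-d/2}) \cdot \bigl|\psi(\cdot)\bigr| = O(q^{u-d/2}),
\]
which is the second case.

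There is no real obstacle here: the only thing to be careful about is correctly handling the boundary case $\beta=\infty$, which is automatically excluded by the restriction $f(\alpha)\neq\infty$ built into $\langle\,\cdot\,\rangle_{\FF_d,\alpha}$, and ensuring the estimate in \eqref{onepoint} is uniform enough that the $O(q^{-d/2})$ can be factored out of the sum over $\beta \in \F_{q^u}$ to give the stated $O(q^{u-d/2})$.
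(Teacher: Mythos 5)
Your proposal is correct and follows essentially the same route as the paper: rewrite the average as a sum over the possible values $\beta = f(\alpha) \in \F_{q^u}$, substitute the uniform estimate from Lemma \ref{formulasforvalues}, and invoke Lemma \ref{keypoint} (equivalently, the vanishing of the character sum when $p \nmid k/u$) to kill the main term in the non-degenerate case. The only cosmetic difference is that you split into the two cases explicitly before applying the character-sum lemma, whereas the paper applies Lemma \ref{keypoint} once and reads off when the sum survives; the substance is identical.
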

\begin{proof}
By reversing the order of summation, we obtain
\begin{eqnarray*}
 \left< \psi(eh\tr_k f(\alpha))\right>_{\FF_d, \alpha}  &=& \sum_{\beta \in \F_{q^u}}\psi(e h \tr_k(\beta))\frac{|\FF_d(\alpha,\beta)|}{|\FF_d|}.
\end{eqnarray*}
We now apply Lemma \ref{formulasforvalues} in order to obtain
\[\frac{E_{\FF_d}(u)}{q^u} \sum_{\beta\in \F_{q^u}} \psi\left(\frac{ehk}{u} \tr_u(\beta)\right)+O\left(q^{u-d/2}\right).\]
Lemma \ref{keypoint} implies that the main term  is zero unless $p\mid \frac{k}{u}$. This completes the proof of the statement.
\end{proof}

For positive integers $k, h$ with $p\nmid h$ and $e \in \{-1, 1\}$, set
\begin{eqnarray*}
M^{k,e,h}_{1,d}&:=& \left<q^{-k/2} \sum_{{\alpha \in \F_{q^k}}\atop{f(\alpha)\not =\infty}} \psi(e h \tr_k f(\alpha))\right>_{\FF_d}\\
&=& q^{-k/2} \sum_{{\alpha \in \F_{q^k}}} \left<\psi(e h \tr_k f(\alpha))\right>_{{\FF_d},\alpha}.\\
\end{eqnarray*}

Lemma \ref{lem:avg1} has the following consequence.
\begin{thm} \label{cor:moment1} Let  $h$ be an integer such that $p\nmid h$ and let $\FF_d$ be any of the families under consideration.
 Then \begin{eqnarray*}M^{k,e,h}_{1,d}&=& e_{p,k}\left(E_{\FF_d}\left(k/p\right)q^{-(1/2-1/p)k}+O\left(q^{-(1/2-1/2p)k}\right)\right)+O\left(q^{3k/2-d/2}\right)\\
       &=&O\left(q^{-(1/2-1/p)k}+q^{3k/2-d/2}\right),
      \end{eqnarray*}
where
\[e_{p,k}=\begin{cases} 0 & p\nmid k, \\
1 & p\mid k. \end{cases}\]
\end{thm}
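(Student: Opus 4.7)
The strategy is to decompose the sum defining $M^{k,e,h}_{1,d}$ according to the exact degree of $\alpha$ over $\F_q$ and then apply Lemma \ref{lem:avg1} termwise.

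First I would partition $\F_{q^k}$ by degree: every $\alpha \in \F_{q^k}$ has a unique degree $u$ with $u \mid k$, and the number of $\alpha$ of degree exactly $u$ is $u\pi(u)$. Thus
\[
M^{k,e,h}_{1,d} \;=\; q^{-k/2}\sum_{u \mid k}\;\sum_{\substack{\alpha \in \F_{q^k}\\\deg\alpha = u}}\bigl\langle\psi(eh\tr_k f(\alpha))\bigr\rangle_{\FF_d,\alpha}.
\]
Lemma \ref{lem:avg1} tells me that the inner average equals $E_{\FF_d}(u)+O(q^{u-d/2})$ when $p\mid k/u$ and is $O(q^{u-d/2})$ otherwise. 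Summing the $O(q^{u-d/2})$ error over all $\alpha$ of degree $u$ contributes $O(q^{2u-d/2})$; summed over $u\mid k$ this is dominated by $u=k$ and gives $O(q^{2k-d/2})$, which after the $q^{-k/2}$ factor yields the global error $O(q^{3k/2-d/2})$ in the statement.

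Next I turn to the main term. If $p\nmid k$, then no $u\mid k$ satisfies $p\mid k/u$, so there is no main term, matching $e_{p,k}=0$. If $p\mid k$, the main term is
\[
q^{-k/2}\sum_{u \mid k/p} u\pi(u)\,E_{\FF_d}(u).
\]
I then use the standard estimate $u\pi(u)=q^u+O(q^{u/2})$ (which follows from $\sum_{d\mid u}d\pi(d)=q^u$) together with $E_{\FF_d}(u)=O(1)$ from Remark \ref{Eestimate}. The dominant divisor is $u=k/p$, contributing $E_{\FF_d}(k/p)q^{k/p-k/2}=E_{\FF_d}(k/p)q^{-(1/2-1/p)k}$. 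The secondary error from replacing $u\pi(u)$ by $q^u$ at $u=k/p$ is $O(q^{k/(2p)-k/2})=O(q^{-(1/2-1/(2p))k})$. Every other divisor $u$ of $k/p$ satisfies $u\le (k/p)/2 = k/(2p)$, so its contribution $q^{u-k/2}$ is also $O(q^{-(1/2-1/(2p))k})$.

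Combining the main term and the error bounds gives the first displayed equality, and the second follows by observing that $E_{\FF_d}(k/p)=O(1)$ so the first line is itself $O(q^{-(1/2-1/p)k}+q^{3k/2-d/2})$. The only mildly delicate point is keeping track of which estimate ($q^{-(1/2-1/p)k}$ versus $q^{-(1/2-1/(2p))k}$) governs the subleading terms; this is essentially bookkeeping of divisors of $k/p$, and no new input beyond Lemma \ref{lem:avg1}, Remark \ref{Eestimate}, and the prime polynomial theorem $\pi(u)=q^u/u+O(q^{u/2}/u)$ is needed.
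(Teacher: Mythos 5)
Your proof is correct and follows essentially the same approach as the paper's: decompose by degree over $\F_q$, apply Lemma~\ref{lem:avg1} termwise, bound the error by the degree-$k$ contribution to get $O(q^{3k/2-d/2})$, and extract the main term when $p\mid k$ using $u\pi(u)=q^u+O(q^{u/2})$ and Remark~\ref{Eestimate} with the dominant divisor $u=k/p$.
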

\begin{proof}
By Lemma \ref{lem:avg1}, we have that
\begin{eqnarray*}
 M^{k,e,h}_{1,d} &=& q^{-k/2} \sum_{{u, pu\mid k}\atop{\alpha \in \F_{q^k}, \deg(\alpha)=u}} E_\FFd(u)+q^{-k/2}\sum_{\alpha \in \F_{q^k}}O(q^{\deg(\alpha)-d/2})\\
 &=&\frac{e_{p,k}}{q^{k/2}} \sum_{m, pm\mid k} E_{\FF_d}(m) \pi(m)m +O\left(q^{3k/2-d/2}\right).
\end{eqnarray*}
Finally, if $p\mid k$, the estimates from Remark \ref{Eestimate} yield
\[\sum_{m, pm\mid k}  E_{\FF_d}(m) \pi(m)m =E_{\FF_d}\left(k/p\right)q^{k/p} +O\left(q^{k/2p}\right). \]

\end{proof}

Notice that changing $h$ allows us to vary the character from $\psi$ to $\psi^h$. This will be useful later.

\begin{thm} \label{boundforSpm}
Let $h$ be an integer such that $p\nmid h$ and  let $\mathcal{F}_d$ be any of the families under consideration. Then for any $K$ with $\max \{1, 1/|\mathcal{I}|\}<K<d/3$,
 \[\left < S^\pm (K,f,\psi^h)\right>_{\FF_d} =O(1). \]
\end{thm}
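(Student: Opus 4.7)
The plan is to swap the average with the finite $k$-sum defining $S^\pm(K,f,\psi^h)$, and then control the resulting sum of first moments using the bound from Theorem \ref{cor:moment1} together with the standard Fourier estimate on the Beurling--Selberg coefficients.

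First, by the definition of $S^\pm(K,f,\psi^h)$ and linearity,
\[
\left< S^\pm(K,f,\psi^h)\right>_{\FF_d}
= \sum_{k=1}^{K}\frac{\widehat{I}_K^\pm(k)\,\langle S_k(f,\psi^h)\rangle_{\FF_d} + \widehat{I}_K^\pm(-k)\,\langle S_k(f,\bar\psi^h)\rangle_{\FF_d}}{q^{k/2}}.
\]
Expanding $S_k(f,\psi^h)=\sum_{\alpha\in\PP^1(\F_{q^k}),\, f(\alpha)\neq\infty}\psi(h\tr_k f(\alpha))$, one recognises $q^{-k/2}\langle S_k(f,\psi^h)\rangle_{\FF_d}=M^{k,1,h}_{1,d}$ and similarly $q^{-k/2}\langle S_k(f,\bar\psi^h)\rangle_{\FF_d}=M^{k,-1,h}_{1,d}$. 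Hence
\[
\left< S^\pm(K,f,\psi^h)\right>_{\FF_d}
= \sum_{k=1}^{K}\Bigl(\widehat{I}_K^\pm(k)\,M^{k,1,h}_{1,d} + \widehat{I}_K^\pm(-k)\,M^{k,-1,h}_{1,d}\Bigr).
\]

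Next, I would apply Theorem \ref{cor:moment1}, which gives $M^{k,e,h}_{1,d}=O\bigl(q^{-(1/2-1/p)k}+q^{3k/2-d/2}\bigr)$ uniformly in $e\in\{\pm1\}$. Combined with the trivial bound $|\widehat{I}_K^\pm(\pm k)|\leq |\mathcal I|+\tfrac{1}{K+1}\ll 1$ from property (e), this yields
\[
\left|\left< S^\pm(K,f,\psi^h)\right>_{\FF_d}\right|
\ll \sum_{k=1}^{K}q^{-(1/2-1/p)k} + \sum_{k=1}^{K}q^{3k/2-d/2}.
\]

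The first sum is a geometric series of ratio $q^{-(1/2-1/p)}<1$ (since $p\ge 3$ makes $1/2-1/p\ge 1/6>0$), so it is bounded by an absolute constant. The second sum is dominated by its last term: since $K<d/3$, one has $3K/2-d/2<0$, and summing the geometric series of ratio $q^{3/2}$ gives
\[
\sum_{k=1}^{K}q^{3k/2-d/2} \ll q^{3K/2+3/2-d/2} \ll 1.
\]
Adding the two bounds yields the desired $O(1)$ estimate. There is no real obstacle here; the only mild point to check is that the condition $K<d/3$ is precisely what is needed to keep the ``trivial'' error $q^{3k/2-d/2}$ from blowing up at $k=K$.
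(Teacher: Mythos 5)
Your proof is correct and follows essentially the same route as the paper's: rewrite $\left<S^\pm(K,f,\psi^h)\right>_{\FF_d}$ as a sum of the first moments $M^{k,\pm 1,h}_{1,d}$, then substitute the bound from Theorem \ref{cor:moment1}. The only difference is in the final step: the paper simply cites Proposition \ref{propmanysums} for the conclusion, whereas you estimate the two geometric series by hand. Your extra care is actually warranted here: Proposition \ref{propmanysums} is stated only for exponential factors $q^{-\gamma k}$ with $\gamma>0$, so it covers the $q^{-(1/2-1/p)k}$ piece but not the $q^{3k/2-d/2}$ piece, which grows in $k$; that one is bounded precisely because the sum is truncated at $K<d/3$, and your direct geometric-series estimate $\sum_{k\le K} q^{3k/2-d/2}\ll q^{3K/2+3/2-d/2}\ll 1$ is exactly the right observation. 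So your argument is a slightly more explicit and self-contained version of the paper's proof, not a genuinely different one.
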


\begin{proof}
 We have that
\begin{eqnarray*}
\left<  S^{\pm}(K, f, \psi^h) \right>_{\FF_d} &=&  \sum_{k=1}^K \frac{\widehat{I}^\pm_K(k)\left<S_k(f,\psi^h) \right>_{\FF_d}+\widehat{I}^\pm_K(-k)\left<S_k(f,\bar{\psi}^h)\right>_{\FF_d}}{q^{k/2}}\\
&=&  \sum_{k=1}^K \widehat{I}^\pm_K(k)M^{k,1,h}_{1,d}+\widehat{I}^\pm_K(-k)M^{k,-1,h}_{1,d}\\
&= &  \sum_{k=1}^K \widehat{I}^\pm_K(k)O\left(q^{-(1/2-1/p)k}+q^{3k/2-d/2}\right).\\
\end{eqnarray*}
and the result follows from Proposition \ref{propmanysums}.
\end{proof}

\begin{thm} Let $\FF_d$ be any of the families under consideration. Then,
\begin{eqnarray*}
\left< N_{\mathcal I}(f,\psi) \right>_\FFd = \frac{1}{|\mathcal{F}_d|}
\sum_{f \in \mathcal{F}_d}  N_{\mathcal I}(f,\psi) &=& (\Delta-1)|\mathcal{I}| + O \left( 1 \right) \\
\left< N_{\mathcal I}(C_f) \right>_\FFd  = \frac{1}{|\mathcal{F}_d|}
\sum_{f \in \mathcal{F}_d}  N_{\mathcal I}(C_f) &=& 2\mathfrak g|\mathcal{I}| + O \left( 1 \right).
\end{eqnarray*}
\end{thm}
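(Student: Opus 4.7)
The plan is to derive this directly from the Beurling--Selberg sandwich and the bound on averages of $S^{\pm}$ already at our disposal. Starting from inequality \eqref{T-estimate-1},
\[
-S^{-}(K,f,\psi)-\frac{\Delta-1}{K+1}\;\leq\; N_{\mathcal I}(f,\psi)-(\Delta-1)|\mathcal{I}|\;\leq\; -S^{+}(K,f,\psi)+\frac{\Delta-1}{K+1},
\]
I would average both sides over $\mathcal{F}_d$ and invoke Theorem \ref{boundforSpm}, which gives $\left<S^{\pm}(K,f,\psi)\right>_{\FF_d}=O(1)$ uniformly for any admissible $K$ in the range $\max\{1,1/|\mathcal{I}|\}<K<d/3$. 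This yields
\[
\left<N_{\mathcal I}(f,\psi)\right>_{\FF_d}=(\Delta-1)|\mathcal{I}|+O(1)+O\!\left(\frac{\Delta-1}{K+1}\right).
\]

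The only nontrivial choice is that of $K$: I would take $K=\lfloor d/4\rfloor$, so that the admissibility constraint is met once $d$ is sufficiently large compared to $1/|\mathcal{I}|$ (which is the regime of interest in any case, since we are sending $\mathfrak g\to\infty$ with $|\mathcal{I}|$ fixed or going to $0$ slowly). With this choice the final error becomes $O(\Delta/d)$, and because in each of the three families $\Delta=r+\sum d_j$ is bounded linearly in $d$ (explicitly, $r+1\leq d$ and $\sum d_j=\deg h = d$, so $\Delta\leq 2d$), the second error term is $O(1)$, giving the first claim.

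For the second formula, I would either sum the first assertion over the $p-1$ nontrivial characters $\psi^h$, or apply the same argument directly to \eqref{T-estimate-allzeroes-1} using $\left<S^{\pm}(K,f,\psi^h)\right>_{\FF_d}=O(1)$ for each $h$; via $2\mathfrak g=(p-1)(\Delta-1)$ this yields $\left<N_{\mathcal I}(C_f)\right>_{\FF_d}=2\mathfrak g|\mathcal{I}|+O(1)$. There is no real obstacle here: the nontrivial work has already been done in proving Theorem \ref{boundforSpm}, and the only point requiring attention is the bookkeeping verification that $\Delta$ is at most linear in $d$ for each of $\FF_d^{\ord}$, $\FF_d^{\full}$, and $\FF_d^{v}$, so that $K$ can be chosen to absorb the $(\Delta-1)/(K+1)$ error into an $O(1)$ term.
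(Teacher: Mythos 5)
Your argument is exactly the paper's: the authors cite equations \eqref{T-estimate-1} and \eqref{T-estimate-allzeroes-1} together with Theorem \ref{boundforSpm}, using $K=\varepsilon d$ for $0<\varepsilon<1/3$, which is the same as your $K=\lfloor d/4\rfloor$. Your explicit check that $\Delta\ll d$ in each family is a welcome piece of bookkeeping the paper leaves implicit, but the route is identical.
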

\begin{proof} This follows from Theorem \ref{boundforSpm} and equations \eqref{T-estimate-1} and \eqref{T-estimate-allzeroes-1}
using $K=\varepsilon d$ for any $0<\varepsilon<1/3$.
\end{proof}

\subsection{Second moment} \label{2mom}

\begin{lem} \label{lemmaindependence}
Let $h_1, h_2$ be integers such that $p\nmid h_1 h_2$, $e_1, e_2 \in \{-1,1 \}$ and  $k_1,k_2 > 0$. Let $\alpha_1 \in \F_{q^{k_1}}$, $\alpha_2 \in \F_{q^{k_2}}$
of degrees $u_1, u_2$ respectively over $\F_q$. For any of the families under consideration, we have,
\begin{eqnarray*}
&&\left< \psi(e_1 h_1\tr_{k_1} f(\alpha_1)+e_2h_2\tr_{k_2} f(\alpha_2)) \right>_{\FF_d, (\alpha_1, \alpha_2)}\\ && \quad \quad =\begin{cases}
E_{\FF_d}(u_1) + \displaystyle O \left( {q^{u_1 -d/2}} \right)& \textrm{$\alpha_1 \sim \alpha_2$, $p \mid
\displaystyle \frac{{e_1 h_1 k_1}+{e_2 h_2 k_2}}{u_1}$}, \\
 \displaystyle O \left(  1 + {q^{u_1+u_2-d/2}} \right)
& \textrm{$\displaystyle \alpha_1 \not\sim \alpha_2$, $p \mid \left( \frac{k_1}{u_1}, \frac{k_2}{u_2} \right)$},
\\ \displaystyle \displaystyle O \left( {{q^{u_1+u_2-d/2}}} \right) & \textrm{otherwise.} \end{cases}
\end{eqnarray*}
\end{lem}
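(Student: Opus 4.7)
The plan is to write the inner expectation as a sum over the possible values $\beta_i := f(\alpha_i)\in\F_{q^{u_i}}$, use Lemma \ref{formulasforvalues} to evaluate the counts $|\FF_d(\alpha_1,\alpha_2,\beta_1,\beta_2)|/|\FF_d|$, and then exploit Lemma \ref{keypoint} to kill the resulting trace sums unless the appropriate $p$-divisibility holds. Concretely, I would first reverse the order of summation to get
\[
\left< \psi(e_1 h_1\tr_{k_1} f(\alpha_1)+e_2h_2\tr_{k_2} f(\alpha_2)) \right>_{\FF_d,(\alpha_1,\alpha_2)}
=\sum_{\beta_1\in\F_{q^{u_1}}}\sum_{\beta_2\in\F_{q^{u_2}}}
\psi\!\left(e_1h_1\tr_{k_1}(\beta_1)+e_2h_2\tr_{k_2}(\beta_2)\right)\frac{|\FF_d(\alpha_1,\alpha_2,\beta_1,\beta_2)|}{|\FF_d|},
\]
and use the standard identity $\tr_{k_i}(\beta_i)=(k_i/u_i)\tr_{u_i}(\beta_i)$ for $\beta_i\in\F_{q^{u_i}}$.

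If $\alpha_1\not\sim\alpha_2$, equation \eqref{twopoints} of Lemma \ref{formulasforvalues} gives
$|\FF_d(\alpha_1,\alpha_2,\beta_1,\beta_2)|/|\FF_d|=E_{\FF_d}(u_1,u_2)q^{-u_1-u_2}+O(q^{-d/2})$
uniformly in $(\beta_1,\beta_2)$. The main term factors as
\[
\frac{E_{\FF_d}(u_1,u_2)}{q^{u_1+u_2}}\prod_{i=1}^{2}\sum_{\beta_i\in\F_{q^{u_i}}}\psi\!\left(\frac{e_ih_ik_i}{u_i}\tr_{u_i}(\beta_i)\right),
\]
which by Lemma \ref{keypoint} vanishes unless $p\mid k_i/u_i$ for both $i$ (because $p\nmid h_i$ and $e_i=\pm 1$); in that case the product is $E_{\FF_d}(u_1,u_2)=O(1)$ by Remark \ref{Eestimate}. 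The error is $q^{u_1+u_2}\cdot O(q^{-d/2})=O(q^{u_1+u_2-d/2})$, giving the second case of the lemma.

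If $\alpha_1\sim\alpha_2$, write $\alpha_2=\sigma(\alpha_1)$ for some $\sigma\in\Gal(\overline{\F_q}/\F_q)$; then $u_1=u_2$. Since $f$ has coefficients in $\F_q$, $f(\alpha_2)=\sigma(f(\alpha_1))$, so equation \eqref{twopointsconj} of Lemma \ref{formulasforvalues} gives a nonzero contribution only when $\beta_2=\sigma(\beta_1)$, and in that case the ratio is $E_{\FF_d}(u_1)/q^{u_1}+O(q^{-d/2})$. Using the Galois-invariance of the absolute trace, $\tr_{k_2}(\sigma(\beta_1))=\tr_{k_2}(\beta_1)=(k_2/u_1)\tr_{u_1}(\beta_1)$, the character sum collapses to
\[
\sum_{\beta_1\in\F_{q^{u_1}}}\psi\!\left(\frac{e_1h_1k_1+e_2h_2k_2}{u_1}\tr_{u_1}(\beta_1)\right),
\]
which by Lemma \ref{keypoint} equals $q^{u_1}$ exactly when $p\mid(e_1h_1k_1+e_2h_2k_2)/u_1$ and vanishes otherwise. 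Multiplying by the $O(q^{-d/2})$ error over the $q^{u_1}$ surviving pairs contributes $O(q^{u_1-d/2})$, yielding the first case of the statement; in all remaining situations we get only the total error $O(q^{u_1+u_2-d/2})$.

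The main subtlety is the conjugate case: one must check carefully that the counting formula \eqref{twopointsconj} forces $\beta_2=\sigma(\beta_1)$, and that the resulting condition $p\mid(e_1h_1k_1+e_2h_2k_2)/u_1$ is the correct replacement for the two separate divisibility conditions appearing in the non-conjugate case. Everything else is a straightforward swap of summations followed by applications of Lemmas \ref{formulasforvalues} and \ref{keypoint} together with the trivial bound $E_{\FF_d}(\cdot)\ll 1$ from Remark \ref{Eestimate}.
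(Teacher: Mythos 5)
Your proof is correct and follows essentially the same route as the paper: reverse the order of summation, apply Lemma \ref{formulasforvalues} (equations \eqref{twopoints} and \eqref{twopointsconj}) to the counts $|\FF_d(\alpha_1,\alpha_2,\beta_1,\beta_2)|/|\FF_d|$, and then invoke Lemma \ref{keypoint} to force the stated $p$-divisibility conditions. The only cosmetic difference is that you phrase the conjugate case via an explicit Galois element $\sigma$, whereas the paper simply notes $\tr_{u_1}f(\alpha_1)=\tr_{u_1}f(\alpha_2)$; these are the same observation.
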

\begin{proof}

Reversing the order of summation, we write
\begin{eqnarray} \nonumber
&&\left< \psi(e_1 h_1\tr_{k_1} f(\alpha_1)+e_2h_2\tr_{k_2} f(\alpha_2)) \right>_{\FF_d, (\alpha_1, \alpha_2)}\\
\label{afterrevsums}
&& \quad \quad = \sum_{\beta_1 \in \F_{q^{u_1}},\beta_2 \in \F_{q^{u_2}}}
\psi(e_1 h_1\tr_{k_1} \beta_1 +e_2h_2\tr_{k_2} \beta_2) \frac{|\mathcal{F}_d (\alpha_1, \alpha_2, \beta_1, \beta_2)|}{|\mathcal F_d|}.
\end{eqnarray}

Assume that $\alpha_1 \not \sim \alpha_2$. By Lemma \ref{formulasforvalues} we can write \eqref{afterrevsums} as
\begin{eqnarray*}
&& \frac{E_{\FF_d}(u_1,u_2)}{q^{u_1+u_2}}
\sum_{{\beta_1 \in \F_{q^{u_1}}}, {\beta_2 \in \F_{q^{u_2}}}}
  \psi \left( \frac{e_1 h_1 k_1}{u_1} \tr_{u_1}\beta_1 +  \frac{e_2 h_2 k_2}{u_2} \tr_{u_2}\beta_2 \right) + O \left({q^{u_1+u_2-d/2}} \right).
\end{eqnarray*}
Then Lemma \ref{keypoint} implies that the sum is zero unless $p \mid \frac{k_1}{u_1}$ and  $p \mid \frac{k_2}{u_2}$.

Now assume that $\alpha_1 \sim \alpha_2$. Then $f(\alpha_1) \sim f(\alpha_2)$ and $\tr_{u_1}f(\alpha_1)  = \tr_{u_1}f(\alpha_2)$. By Lemma \ref{formulasforvalues} we can write \eqref{afterrevsums} as
\begin{eqnarray*}
&& \frac{E_{\FF_d}(u_1)}{q^{u_1}}
\sum_{{\beta_1 \in \F_{q^{u_1}}}}
  \psi \left( \frac{e_1 h_1 k_1+e_2 h_2 k_2}{u_1} \tr_{u_1}\beta_1 \right) + O \left({q^{u_1-d/2}} \right).
\end{eqnarray*}
Then Lemma \ref{keypoint} implies that the sum is zero unless $p \mid  \frac{e_1 h_1 k_1+e_2 h_2 k_2}{u_1}$.

\end{proof}

\begin{lem} \label{entintrick} Let $h_1, h_2$ be integers such that $p\nmid h_1 h_2$, $e_1, e_2 \in \{-1,1 \}$
and  $k_1,k_2 > 0$, $k_1 \geq k_2$.
Let $\FF_d$ be any of the families under consideration. Then, 
\begin{eqnarray*}
\sum_{{{m\mid ({k_1},{k_2})}\atop{mp\nmid {k_1},{k_2}}}\atop{mp \mid (e_1h_1{k_1}+e_2h_2{k_2})}}
 E_{\FF_d} (m) {\pi(m) m^2}
= \begin{cases} E_{\FF_d}(k_1) k_1 q^{k_1} + O \left( {k_1} q^{{k_1}/2}  \right) & k_1=k_2, p \mid(e_1 h_1 + e_2 h_2), \\
0 & k_1=k_2, p \nmid (e_1 h_1 + e_2 h_2), \\
O \left( k_1 q^{k_1/2} \right) & k_1 = 2k_2, \\
O \left( k_1 q^{k_1/3} \right) & k_1 \neq k_2, 2k_2. \end{cases}
\end{eqnarray*}
\end{lem}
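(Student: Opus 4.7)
The approach is to combine the prime polynomial theorem $\pi(m) = q^m/m + O(q^{m/2}/m)$ with the uniform bound $E_{\FF_d}(m) = O(1)$ from Remark~\ref{Eestimate}, so that each summand factors as $E_{\FF_d}(m)\,\pi(m)\,m^2 = E_{\FF_d}(m)\, m\, q^m + O(m\, q^{m/2})$. The whole sum is then governed by the largest index $m$ that simultaneously satisfies the three conditions $m \mid \gcd(k_1,k_2)$, $mp \nmid k_1, k_2$, and $mp \mid e_1 h_1 k_1 + e_2 h_2 k_2$, so I would begin by translating these into a statement about $p$-adic valuations, using that $m\mid\gcd(k_1,k_2)$ combined with $mp\nmid k_1$ and $mp\nmid k_2$ forces $v_p(m)=\min(v_p(k_1),v_p(k_2))$.

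I would then split into three cases. If $k_1 = k_2$, the third condition becomes $mp \mid (e_1 h_1 + e_2 h_2) k_1$. When $p \nmid (e_1 h_1 + e_2 h_2)$ this would force $mp \mid k_1$, contradicting $mp \nmid k_1$, so the sum is empty. When $p \mid (e_1 h_1 + e_2 h_2)$ the third condition is automatic for every divisor $m \mid k_1$ with $k_1/m$ coprime to $p$; the largest such $m$ is $k_1$ itself, producing the main term $E_{\FF_d}(k_1)\,k_1\, q^{k_1} + O(k_1 q^{k_1/2})$, while every other admissible $m$ is at most $k_1/2$, so the geometric bound $\sum_{m \leq k_1/2} m q^m = O(k_1 q^{k_1/2})$ absorbs the rest. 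If $k_1 = 2k_2$, the odd characteristic $p$ makes $mp \nmid 2k_2$ equivalent to $mp \nmid k_2$, and the same argument applied to $k_2 = k_1/2$ produces either an empty sum or a total contribution of $O(k_2 q^{k_2}) = O(k_1 q^{k_1/2})$. Finally, when $k_1 \neq k_2, 2k_2$, write $k_1 = ga$ and $k_2 = gb$ with $\gcd(a,b) = 1$ and $a > b$; excluding $(a,b) = (1,1), (2,1)$ forces $a \geq 3$, hence $\gcd(k_1,k_2) \leq k_1/3$, so every admissible $m$ satisfies $m \leq k_1/3$ and the same geometric-type estimate gives $O(k_1 q^{k_1/3})$.

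The delicate step I expect to be the main obstacle is the $p$-adic bookkeeping that pins down the admissible $m$ in the main-term case, together with the extraction of the precise coefficient $E_{\FF_d}(k_1)\,k_1\, q^{k_1}$: one has to make sure that both the prime polynomial error $O(q^{m/2}/m)$ coming from the leading term $m=k_1$ and the combined contribution of all secondary divisors $m < k_1$ fit inside the claimed error $O(k_1 q^{k_1/2})$, which amounts to observing that the next-largest admissible $m$ is bounded by $k_1/2$ in case (a), by $k_1/2$ in case (b), and by $k_1/3$ in case (c).
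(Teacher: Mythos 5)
Your proof is correct and follows essentially the same route as the paper: in both cases one invokes the prime polynomial theorem $\pi(m)=q^m/m+O(q^{m/2}/m)$ and the uniform bound $E_{\FF_d}(m)=O(1)$ from Remark~\ref{Eestimate}, observes that the divisor sum is governed by the largest admissible $m$, and then case-splits on whether $k_1=k_2$, $k_1=2k_2$, or neither, bounding the largest admissible divisor by $k_1$, $k_2=k_1/2$, and $\gcd(k_1,k_2)\le k_1/3$ respectively. The only stylistic difference is that you spell out the $p$-adic valuation bookkeeping (forcing $v_p(m)=v_p(k_1)=v_p(k_2)$) and the argument that the secondary divisors geometrically sum into the error, where the paper simply cites the estimates from Remark~\ref{Eestimate} and asserts the bound.
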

\begin{proof}

For the first case
when ${k_1}={k_2}$,  the conditions on the summation indices become $m\mid {k_1}$, $mp\nmid {k_1}$, and $mp\mid (e_1h_1+e_2h_2){k_1}$, a contradiction unless $p\mid (e_1h_1+e_2h_2)$. 
In this case, one gets
\[\sum_{{m\mid {k_1}}\atop{mp\nmid {k_1}}} E_{\FF_d}(m){\pi(m) m^2}    = {E_{\FF_d}(k_1) {k_1}q^{k_1}} +O\left({k_1}q^{{k_1}/2} \right),\]
where we have used the estimates for $\pi(m)$ and $E_{\FF_d}(m)$ discussed in Remark \ref{Eestimate}.

On the other hand, when ${k_1}=2{k_2}$, one gets
\[\sum_{{{m\mid {k_2}}\atop{mp\nmid {k_2}}}\atop{mp \mid (2e_1h_1+e_2h_2){k_2}}}
E_{\FF_d}(m){\pi(m) m^2}
=O \left( {k_1}q^{k_1/2}  \right)
.\]
Finally, if ${k_1}>{k_2}$ but ${k_1}\not = 2{k_2}$, we have $({k_1},{k_2})\leq {k_1}/3$ and
\[\sum_{{{m\mid ({k_1},{k_2})}\atop{mp\nmid {k_1},{k_2}}}\atop{mp \mid (e_1h_1{k_1}+e_2h_2{k_2})}}
E_{\FF_d}(m) {\pi(m) m^2}
=O\left({k_1} q^{{k_1}/3}
\right).\]
This completes the proof.
\end{proof}

For positive integers $k_1,k_2,h_1,h_2$ with $p\nmid h_1h_2$ and $e_1, e_2 \in \{ -1, 1\}$, let
\begin{eqnarray*}
M_{2,d}^{(k_1,k_2),(e_1,e_2),(h_1,h_2)} &:=& \left<  q^{-(k_1+k_2)/2} \sum_{{\alpha_1 \in \F_{q^{k_1}}, \alpha_2 \in \F_{q^{k_2}}} \atop {f(\alpha_1) \neq \infty, f(\alpha_2) \neq \infty}}
\psi(e_1h_1\tr_{k_1} f(\alpha_1)+e_2h_2 \tr_{k_2} f(\alpha_2)) \right>_{\FF_d} \\
&=& q^{-(k_1+k_2)/2} \sum_{{\alpha_1 \in \F_{q^{k_1}}}\atop{\alpha_2 \in \F_{q^{k_2}}}}
\left< \psi(e_1h_1\tr_{k_1} f(\alpha_1) +e_2h_2 \tr_{k_2} f(\alpha_2)) \right>_{\FF_d, (\alpha_1, \alpha_2)}.
\end{eqnarray*}

Using Lemma \ref{entintrick}, we can prove
the following analogue of Theorem 8 in \cite{entin}.

\begin{thm}\label{Mcovariance} Let $0<h_1, h_2 \leq (p-1)/2$, $e_1, e_2 \in \left\{ -1, 1 \right\}$,
$k_1 \geq k_2 > 0$, and let $\FF_d$ be any of the families under
consideration.
Then
\begin{eqnarray*}
M_{2,d}^{({k_1},{k_2}),(e_1,e_2),(h_1,h_2)} &=&
\begin{cases}
 \delta_{{k_1},{k_2}}  \left(E_{\FF_d}(k_1) k_1  + O\left(k_1 q^{-{k_1}/2} + k_1 q^{(k_1-d/2)} \right)\right) & e_1 = -e_2, \,h_1=h_2,\\
 0 & \text{ otherwise,}
\end{cases}\\
&& + \delta_{{k_1},2{k_2}} O \left(k_1 q^{-k_2/2} + {k_1} q^{{k_2}/2-d/2}\right) \\
&&+ O \left({k_1} q^{-{k_2}/2-{k_1}/6} + k_1 q^{k_1/6 - k_2/2 - d/2} \right)\\
&& +O \left(
 q^{(1/p - 1/2)({k_1}+{k_2})} + q^{3({k_1}+{k_2})/2 - d/2}\right)
\end{eqnarray*}
where
\begin{eqnarray*}
\delta_{{k_1},{k_2}} = \begin{cases} 1, & {k_1}={k_2}, \\0, & {k_1} \neq {k_2}. \end{cases}
\end{eqnarray*}
\end{thm}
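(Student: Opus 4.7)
The plan is to unfold the definition by interchanging the order of summation with the average, writing
\[M_{2,d}^{(k_1,k_2),(e_1,e_2),(h_1,h_2)} = q^{-(k_1+k_2)/2} \sum_{\alpha_1 \in \F_{q^{k_1}},\,\alpha_2 \in \F_{q^{k_2}}} \left\langle \psi\bigl(e_1h_1 \tr_{k_1}f(\alpha_1)+e_2h_2\tr_{k_2}f(\alpha_2)\bigr) \right\rangle_{\FF_d,\,(\alpha_1, \alpha_2)},\]
and then splitting the double sum according to whether $\alpha_1 \sim \alpha_2$ or not. Lemma \ref{lemmaindependence} supplies the size of the inner average in both regimes, after which everything reduces to counting pairs of points in $\mathbb P^1$ of prescribed degrees.

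For the non-conjugate part, Lemma \ref{lemmaindependence} yields an inner average of $O(1+q^{u_1+u_2-d/2})$ when $p\mid k_1/u_1$ and $p\mid k_2/u_2$, and $O(q^{u_1+u_2-d/2})$ otherwise. Using $u_i\pi(u_i)\ll q^{u_i}$ and summing over divisors, the first case (whose largest admissible $u_i$ is $k_i/p$) contributes $O(q^{(1/p-1/2)(k_1+k_2)})$, while the residual cases sum to $O(q^{3(k_1+k_2)/2-d/2})$. These match the last two error terms in the theorem statement.

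The bulk of the work lies in the conjugate piece. If $\alpha_1 \sim \alpha_2$ then $u_1=u_2=u$ divides $\gcd(k_1,k_2)$, and the number of ordered conjugate pairs of degree $u$ is $u^2\pi(u)$. Lemma \ref{lemmaindependence} shows the inner average equals $E_{\FF_d}(u)+O(q^{u-d/2})$ provided $pu\mid e_1h_1k_1+e_2h_2k_2$, and is $O(q^{u-d/2})$ otherwise. I would partition those $u$ contributing to the main term into those with $pu$ dividing both $k_1$ and $k_2$, versus those with $pu$ dividing neither: when $pu$ divides exactly one of them, combining the divisibility condition with $p\nmid h_i$ forces a contradiction. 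The first sub-family is absorbed into the error since it runs only up to $u=\gcd(k_1,k_2)/p$. The second is precisely the sum estimated in Lemma \ref{entintrick}, which I would invoke to produce the main term together with the error terms in the three regimes $k_1=k_2$, $k_1=2k_2$, and $k_1\neq k_2,2k_2$. Under the standing hypothesis $h_i\in\{1,\dots,(p-1)/2\}$, elementary arithmetic shows that $p\mid e_1h_1+e_2h_2$ (for $k_1=k_2$) occurs if and only if $e_1=-e_2$ and $h_1=h_2$, accounting for the displayed first case with main term $E_{\FF_d}(k_1)\,k_1$. Finally, the $O(q^{u-d/2})$ errors from the conjugate case, weighted by $u^2\pi(u)$ and summed over $u\mid\gcd(k_1,k_2)$, yield $k_1q^{k_1-d/2}$, $k_1q^{k_2/2-d/2}$, and $k_1q^{k_1/6-k_2/2-d/2}$ in the three respective regimes.

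The main obstacle is the bookkeeping: separating the ``covariance'' part of the conjugate contribution from the ``product of first moments'' part, then matching Lemma \ref{entintrick}'s three-case analysis against the error terms in the theorem statement. Once this decomposition is in place, assembling the pieces is routine.
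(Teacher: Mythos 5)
Your proposal is correct and follows essentially the same route as the paper: unfold $M_{2,d}$ via Lemma \ref{lemmaindependence}, separate the conjugate pairs from the rest, handle the non-conjugate contribution and the $O(q^{u_1+u_2-d/2})$ tail as the last two error classes, and then feed the conjugate main part into Lemma \ref{entintrick} using the observation that $p\mid(e_1h_1+e_2h_2)$ with $0<h_1,h_2\le (p-1)/2$ forces $e_1=-e_2$, $h_1=h_2$. Your explicit three-way partition of the conjugate degrees (into $pu$ dividing both $k_i$, exactly one, or neither) is a small but genuine clarification of why the side condition $mp\nmid k_1,k_2$ appears in the domain of Lemma \ref{entintrick} and why the "both" regime (bounded by $u\le\gcd(k_1,k_2)/p$) gets absorbed into the $O\left(q^{(1/p-1/2)(k_1+k_2)}\right)$ error, a point the paper's proof passes over silently.
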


\begin{proof}

From Lemma \ref{lemmaindependence}, we have
\begin{eqnarray*}
M_{2,d}^{({k_1},{k_2}),(e_1,e_2),(h_1,h_2)}  &=&
\frac{e_{p,e_1h_1{k_1}+e_2h_2{k_2}}}{q^{({k_1}+{k_2})/2}}
\sum_{{{m\mid ({k_1},{k_2})}\atop{mp\nmid {k_1},{k_2}}}\atop{mp \mid (e_1h_1{k_1}+e_2 h_2{k_2})}} {\pi(m)m^2}
\left( E_{\FF_d}(m) + O ( q^{m-d/2} ) \right) \\
&& + O \left( \frac{e_{p,{k_1}} e_{p,{k_2}}}{q^{({k_1}+{k_2})/2}}
\sum_{{\deg{\alpha_1}=u_1, \deg{\alpha_2}=u_2} \atop {p \mid \frac{k_1}{u_1}, p \mid \frac{k_2}{u_2}}}
\left( 1 + q^{u_1+u_2 - d/2} \right) \right)\\
&& + O \left( \frac{1}{q^{({k_1}+{k_2})/2}} \sum_{{\deg{\alpha_1}=u_1, \deg{\alpha_2}=u_2} \atop {u_1 \mid k_1, u_2 \mid k_2}} q^{u_1+u_2 - d/2} \right).
\end{eqnarray*}
It is easy to see that the last two terms are
$$ O \left(
 q^{(1/p - 1/2)({k_1}+{k_2})} + q^{3({k_1}+{k_2})/2 - d/2}\right).
$$

For the first term, we use Lemma \ref{entintrick}. As a final observation, the condition $p\mid e_1h_1+e_2h_2$ translates into
$h_1=h_2$ and $e_1=-e_2$ because of the restriction on the possible values for $h_1, h_2$.
This concludes the proof of the theorem.
\end{proof}

Using Lemma \ref{entintrick}, we can prove the following result which will also be used in
the general moments.

\begin{prop} \label{recordresult} Let $h_1, h_2$ be integers such that $p\nmid h_1 h_2$, $e_1, e_2 \in \{-1,1 \}$
and  $k_1,k_2 > 0$.
Let $\FF_d$ be any of the families under consideration. Then,
\begin{eqnarray*}
&&\sum_{k_1, k_2 = 1}^K \widehat{I}_K^{\pm}(e_1 k_1) \widehat{I}_K^{\pm}(e_2 k_2) q^{-(k_1+k_2)/2} \sum_{{{m\mid ({k_1},{k_2})}\atop{mp\nmid {k_1},{k_2}}}\atop{mp \mid (e_1h_1{k_1}+e_2 h_2{k_2})}} E_{\FF_d}(m) \pi(m)m^2 \\
&& \quad \quad \quad =
\begin{cases} \displaystyle \frac{1}{2\pi^2} \log{\left( K |\mathcal{I}| \right)} + O(1) &
p \mid (e_1 h_1 + e_2 h_2), \\\\
O(1) & \mbox{otherwise}. \end{cases}
\end{eqnarray*}
\end{prop}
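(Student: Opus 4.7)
The plan is to split the double sum according to the four cases in Lemma \ref{entintrick} and show that only the diagonal $k_1=k_2$ produces the main term, while every off-diagonal contribution is absorbed into $O(1)$ via Proposition \ref{propmanysums}.

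First I would assume, without loss of generality by the symmetry $(k_1,e_1,h_1)\leftrightarrow(k_2,e_2,h_2)$ of the inner sum, that $k_1\geq k_2$ and account for the other half by a factor of $2$ (the diagonal is counted once). Applying Lemma \ref{entintrick} to the inner sum over $m$ decomposes the double sum into three pieces: the diagonal $k_1=k_2$, the relation $k_1=2k_2$, and the generic case $k_1\neq k_2,2k_2$. I will handle these in increasing order of ease.

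For the diagonal, when $p\nmid(e_1h_1+e_2h_2)$ the inner sum vanishes, while when $p\mid(e_1h_1+e_2h_2)$ I obtain
\begin{equation*}
\sum_{k=1}^{K}\widehat{I}_K^{\pm}(e_1k)\widehat{I}_K^{\pm}(e_2k)\,q^{-k}\bigl(E_{\FF_d}(k)\,k\,q^{k}+O(k\,q^{k/2})\bigr).
\end{equation*}
Using property (d) of the Beurling--Selberg polynomials, $\widehat{I}_K^{\pm}(\pm k)=\widehat{I}_K^{\pm}(k)$, so the coefficient factor simplifies to $\widehat{I}_K^{\pm}(k)^2$. The error term is $O(1)$ by Proposition \ref{propmanysums}. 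For the main piece, Remark \ref{Eestimate} gives $E_{\FF_d}(k)=1+O(kq^{-k})$, and the $O$-remainder this introduces is again controlled by Proposition \ref{propmanysums}. The surviving sum
\[
\sum_{k=1}^{K}\widehat{I}_K^{\pm}(k)^2\,k
\]
is exactly $\tfrac{1}{2\pi^2}\log(K|\mathcal I|)+O(1)$ by Proposition \ref{propFR}, yielding the asserted main term.

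For the $k_1=2k_2$ piece, the inner sum is $O(k_1 q^{k_1/2})=O(k_2 q^{k_2})$, so the contribution is bounded by
\begin{equation*}
\sum_{k_2\geq 1}|\widehat{I}_K^{\pm}(2k_2)|\,|\widehat{I}_K^{\pm}(k_2)|\,k_2\,q^{-k_2/2},
\end{equation*}
which is $O(1)$ by the second statement of Proposition \ref{propmanysums}. For the generic case $k_1\neq k_2,2k_2$ (with $k_1\geq k_2$), the inner sum is $O(k_1 q^{k_1/3})$, giving
\begin{equation*}
\sum_{k_1,k_2\geq 1}|\widehat{I}_K^{\pm}(k_1)||\widehat{I}_K^{\pm}(k_2)|\,k_1\,q^{-k_1/6-k_2/2},
\end{equation*}
which is $O(1)$ by the first statement of Proposition \ref{propmanysums}. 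The mildest but only real subtlety is bookkeeping the congruence condition $p\mid(e_1h_1+e_2h_2)$ through the diagonal case and verifying that property (d) really does collapse the sign dependence in $\widehat{I}_K^{\pm}(e_ik_i)$; beyond that, the argument is a direct combination of Lemma \ref{entintrick} with the two Beurling--Selberg estimates of Section \ref{trigapprox}.
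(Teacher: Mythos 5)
Your proposal is correct and follows essentially the same route as the paper: apply Lemma~\ref{entintrick} to the inner sum, isolate the diagonal $k_1=k_2$ as the only source of the main term (and only when $p\mid e_1h_1+e_2h_2$), replace $E_{\FF_d}(k_1)$ by $1+O(k_1q^{-k_1})$ via Remark~\ref{Eestimate}, extract $\tfrac{1}{2\pi^2}\log(K|\mathcal I|)$ from Proposition~\ref{propFR}, and absorb the off-diagonal and error contributions into $O(1)$ using Proposition~\ref{propmanysums}. The only stylistic difference is that you spell out the three cases from Lemma~\ref{entintrick} and the geometric error bounds more explicitly, whereas the paper compresses them into two error sums; the substance is identical.
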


\begin{proof} 
Using Lemma \ref{entintrick}, we have  the sum is
\begin{eqnarray*}
&&e_{p, e_1 h_1 + e_2 h_2} \sum_{k_1=1}^K \widehat{I}_K^{\pm}(k_1) \widehat{I}_K^{\pm}(- k_1) \left( E_{\FF_d}(k_1) k_1 + O \left( k_1 q^{-k_1/2} \right) \right)
\\
&& + O \left( \sum_{k_1=1}^K k_1 q^{-k_1/4} + \sum_{k_1, k_2=1}^K k_1 q^{-k_1/6} q^{-k_2/2} \right)
\\
&&=e_{p, e_1 h_1 + e_2 h_2} \sum_{k_1=1}^K \widehat{I}_K^{\pm}(k_1) \widehat{I}_K^{\pm}(- k_1) E_{\FF_d}(k_1)
k_1 + O ( 1 ).
\end{eqnarray*}
Now the estimates from Remark \ref{Eestimate} and Proposition \ref{propFR} yield
\begin{eqnarray*}
\sum_{k_1=1}^K \widehat{I}_K^{\pm}(k_1) \widehat{I}_K^{\pm}(- k_1)  E_{\FF_d}(k_1)  k_1
&=&  \sum_{k_1=1}^K \widehat{I}_K^{\pm}(k_1) \widehat{I}_K^{\pm}(- k_1) k_1 +
O \left( \sum_{{k_1=1}}^K  k_1^2 q^{-k_1} \right)\\
 &=& \frac{1}{2\pi^2} \log( K |\mathcal{I}| ) + O ( 1 ),
\end{eqnarray*}
which finishes the proof of the statement.
\end{proof}

Finally, we are able to compute the covariances.

\begin{thm}\label{covariance}Let  $0< h_1, h_2\leq (p-1)/2$, and let $\FF_d$ be any of the families under consideration.
Then for any  $K$ with $1/|\mathcal I|<K<d/6$,
\[
 \left< S^\pm(K, f, \psi^{h_1}) S^\pm(K, f, \psi^{h_2})\right>_{\FF_d}=\left< S^\pm(K, f, \psi^{h_1}) S^\mp(K, f, \psi^{h_2}) \right>_{\FF_d} =\begin{cases}

\displaystyle \frac{1}{\pi^2} \log (K |\mathcal{I}|)+ O\left(1\right) & h_1=h_2,
 \\
 &\\
  O\left(1\right) & h_1\neq h_2.
  \end{cases}
\]

\end{thm}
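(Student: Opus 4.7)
The plan is to expand the product $S^{\pm}(K,f,\psi^{h_1})\, S^{\pm}(K,f,\psi^{h_2})$ (and analogously $S^{\pm}S^{\mp}$) as a quadruple sum over $1\le k_1,k_2\le K$ and signs $e_1,e_2\in\{+1,-1\}$. Each summand then contains the product $S_{k_1}(f,\psi^{e_1 h_1})\,S_{k_2}(f,\psi^{e_2 h_2})$ weighted by $\widehat{I}^{\pm}_K(e_1 k_1)\,\widehat{I}^{\pm}_K(e_2 k_2)\,q^{-(k_1+k_2)/2}$, using the convention that $\psi^{-h}=\bar\psi^h$. Swapping the order of summation with the average over $\FF_d$, the inner average is exactly the quantity $M_{2,d}^{(k_1,k_2),(e_1,e_2),(h_1,h_2)}$ computed in Theorem~\ref{Mcovariance}.

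Next I would read off from Theorem~\ref{Mcovariance} which sign/character configurations can produce a main term. The decisive condition is $e_1 h_1+e_2 h_2\equiv 0\pmod p$, which under the constraint $0<h_1,h_2\le(p-1)/2$ forces $h_1=h_2$ and $e_1=-e_2$. So when $h_1\ne h_2$ no configuration contributes a main term, and only the error estimates have to be handled. When $h_1=h_2$, exactly the two cross configurations $(e_1,e_2)=(1,-1)$ and $(-1,1)$ contribute.

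For each of these two contributing configurations I would invoke Proposition~\ref{recordresult}, which delivers $\frac{1}{2\pi^2}\log(K|\mathcal I|)+O(1)$ per pair; summing the two yields the announced $\frac{1}{\pi^2}\log(K|\mathcal I|)+O(1)$. Crucially this is agnostic to whether one is computing $S^{\pm}S^{\pm}$ or $S^{\pm}S^{\mp}$, because the two basic sums $\sum_k \widehat{I}^{\pm}_K(k)^2 k$ and $\sum_k \widehat{I}^{+}_K(k)\widehat{I}^{-}_K(k)k$ have the same leading term by Proposition~\ref{propFR}; parity of the coefficients ($\widehat{I}^{\pm}_K(-k)=\widehat{I}^{\pm}_K(k)$) folds the $(1,-1)$ and $(-1,1)$ contributions together.

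The remaining work, which is the main technical obstacle, is to control all the error terms produced by Theorem~\ref{Mcovariance} after weighting by the Fourier coefficients and summing over $k_1,k_2$. The terms with geometric decay in $k_i$ (such as $k_1 q^{-k_1/2}$ or $k_1 q^{-k_2/2-k_1/6}$) yield $O(1)$ contributions through Proposition~\ref{propmanysums}. The dangerous terms are those growing in $K$, namely $q^{k_1-d/2}$, $q^{k_2/2-d/2}$, $q^{k_1/6-k_2/2-d/2}$, and especially $q^{3(k_1+k_2)/2-d/2}$; after summing $k_i\le K$ the worst of these is $O(K^2 q^{3K-d/2})$, which is $O(1)$ precisely when $3K\le d/2$. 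This is exactly where the hypothesis $K<d/6$ intervenes, and the verification is essentially a careful bookkeeping of these five or six error types against the crude bound $|\widehat{I}^{\pm}_K(k)|\ll |\mathcal I|+1/K$.
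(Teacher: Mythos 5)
Your proposal matches the paper's proof essentially step for step: expand into the quadruple sum over $k_1,k_2,e_1,e_2$ and recognize the inner averages as $M_{2,d}^{(k_1,k_2),(e_1,e_2),(h_1,h_2)}$, apply Theorem~\ref{Mcovariance} to isolate the main term (which requires $k_1=k_2$, $e_1=-e_2$, $h_1=h_2$ under the constraint $0<h_i\le(p-1)/2$), extract the logarithm via Proposition~\ref{recordresult} with the two cross sign-configurations giving the factor $2$, and dispatch the error terms through Propositions~\ref{propmanysums} and the hypothesis $K<d/6$. The paper handles the error bookkeeping a touch more carefully by separating the $d>6K$ and $d>2K$ conditions, but your identification of the dominant bad term $q^{3(k_1+k_2)/2-d/2}$ and the role of $K<d/6$ is exactly right.
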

\begin{proof}
 By definition,
\begin{eqnarray*}
 &&\left< S^\pm(K, f, \psi^{h_1}) S^\pm(K, f, \psi^{h_2}) \right>_{\FF_d}   \\
&&= \sum_{{k_1},{k_2} =1}^K \widehat{I}_K^\pm({k_1}) \widehat{I}_K^\pm({k_2}) M_{2,d}^{({k_1},{k_2}),(1,1),(h_1,h_2)}  +
\widehat{I}_K^\pm({k_1}) \widehat{I}_K^\pm(-{k_2}) M_{2,d}^{({k_1},{k_2}),(1,-1),(h_1,h_2)}  \\
&& \;\;\;\; + \widehat{I}_K^\pm(-{k_1}) \widehat{I}_K^\pm({k_2}) M_{2,d}^{({k_1},{k_2}),(-1,1),(h_1,h_2)}+\widehat{I}_K^\pm(-{k_1}) \widehat{I}_K^\pm(-{k_2}) M_{2,d}^{({k_1},{k_2}),(-1,-1),(h_1,h_2)}.
\end{eqnarray*}
Using Theorem \ref{Mcovariance} to replace the terms above, we first remark that
the contribution of the last two error terms from Theorem \ref{Mcovariance} to the sum is
$$
\ll  \sum_{{k_1},{k_2} =1}^K
 k_1q^{-{k_2}/2-{k_1}/6} +  k_1q^{k_1/6 - k_2/2 - d/2}
+q^{(1/p - 1/2)({k_1}+{k_2})} + q^{3({k_1}+{k_2})/2 - d/2} \ll 1
$$
provided that $d > 6K$.

Similarly, the contribution of the error terms for $k_1=k_2$ and
$k_1 = 2k_1$ is bounded by
$$\ll \sum_{k=1}^K kq^{-k/2} +k q^{k-d/2}  \ll 1
$$
provided that $d > 2K$.
Finally, the main term comes from summing $E_{\FF_d}(k_1) k_1$ when $k_1=k_2$, and this occurs only
when $h_1 = h_2$ and $\{e_1, e_2\} = \{1, -1\}$. Proceeding as in the proof of Proposition \ref{recordresult},
we then get that
\begin{eqnarray*}
\left< S^\pm(K, f, \psi^{h_1}) ^2 \right>_{\FF_d} &=& 2 \sum_{{k_1}=1}^K \widehat{I}_K^\pm({k_1})\widehat{I}_K^\pm({-k_1}) {k_1} E_{\FF_d}(k_1)
+ O(1)\\
&=& \frac{1}{\pi^2} \log(K |\mathcal{I}|) + O(1).
\end{eqnarray*}

The proof for  $\left< S^\pm(K, f, \psi^{h_1}) S^\mp(K, f, \psi^{h_2}) \right>_{\FF_d}$ follows exactly along the same lines.
\end{proof}

\begin{cor}\label{cor:2ndmoment}
For any  $K$ with $1/|\mathcal I|<K<d/6$,
\[
\left\langle S^\pm(K,C_f)^2 \right\rangle_{\FF_d}=\left\langle S^+(K,C_f)S^-(K,C_f)\right\rangle_{\FF_d}=\frac{2(p-1)}{\pi^2}\log(K|\mathcal I|) + O(1).
\]
\end{cor}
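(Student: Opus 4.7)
The plan is to reduce the second moment of $S^\pm(K, C_f)$ to the covariances already computed in Theorem \ref{covariance}. First I would expand using the definition \eqref{def-SCf} to write
\[
\left\langle S^\pm(K, C_f)^2 \right\rangle_{\FF_d} = \sum_{h_1, h_2 = 1}^{p-1} \left\langle S^\pm(K, f, \psi^{h_1})\, S^\pm(K, f, \psi^{h_2}) \right\rangle_{\FF_d}.
\]
The covariance formula of Theorem \ref{covariance} is stated only for $h_1, h_2 \in \{1, \dots, (p-1)/2\}$, so the preliminary step is to establish a conjugation symmetry that folds the sum into that range.

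The key observation is that since the Beurling--Selberg polynomials $I_K^\pm$ are even, Property (d) yields $\widehat{I}_K^\pm(-k) = \widehat{I}_K^\pm(k)$. Consequently the defining expression
\[
S^\pm(K, f, \psi) = \sum_{k=1}^K \frac{\widehat{I}_K^\pm(k)\,S_k(f,\psi) + \widehat{I}_K^\pm(-k)\,S_k(f,\bar\psi)}{q^{k/2}}
\]
is invariant under $\psi \mapsto \bar\psi$. Since $\psi^{p-h} = \overline{\psi^h}$, I obtain
\[
S^\pm(K, f, \psi^h) = S^\pm(K, f, \psi^{p-h}), \quad 1 \le h \le p-1,
\]
so that $S^\pm(K, C_f) = 2 \sum_{h=1}^{(p-1)/2} S^\pm(K, f, \psi^h)$ and therefore
\[
\left\langle S^\pm(K, C_f)^2 \right\rangle_{\FF_d} = 4 \sum_{h_1, h_2 = 1}^{(p-1)/2} \left\langle S^\pm(K, f, \psi^{h_1})\, S^\pm(K, f, \psi^{h_2}) \right\rangle_{\FF_d}.
\]

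At this point Theorem \ref{covariance} applies directly. Each of the $(p-1)/2$ diagonal terms contributes $\frac{1}{\pi^2}\log(K|\mathcal I|) + O(1)$, while the finitely many off-diagonal terms are each $O(1)$. Multiplying the diagonal contribution by $4 \cdot (p-1)/2 = 2(p-1)$ yields the main term $\frac{2(p-1)}{\pi^2}\log(K|\mathcal I|)$, and the remaining terms collect into $O(1)$. The identical argument handles the mixed moment $\left\langle S^+(K, C_f)\, S^-(K, C_f) \right\rangle_{\FF_d}$, now invoking the second part of Theorem \ref{covariance}. No real obstacle is anticipated: all the work is already done in Theorem \ref{covariance}, and what remains is a short combinatorial accounting combined with the evenness symmetry of the Beurling--Selberg coefficients.
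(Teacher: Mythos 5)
Your proof is correct and follows essentially the same route as the paper: expand the square, apply the covariance formula of Theorem~\ref{covariance}, and count the diagonal/anti-diagonal contributions to get $2(p-1)$ copies of $\frac{1}{\pi^2}\log(K|\mathcal{I}|)$. The only small difference is organizational: you fold the sum using $S^\pm(K,f,\psi^h)=S^\pm(K,f,\psi^{p-h})$ before invoking Theorem~\ref{covariance} (so it applies exactly in its stated range $h_1,h_2\le(p-1)/2$), whereas the paper keeps $h_1,h_2$ ranging over $1,\dots,p-1$ and implicitly extends the theorem to the pairs with $h_1=p-h_2$ by the same symmetry; your version is arguably the cleaner bookkeeping, and the paper itself records the identity $S^\pm(K,f,\psi^h)=S^\pm(K,f,\psi^{p-h})$ when it later treats general moments.
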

\begin{proof}
First we note that
\begin{eqnarray*}
\left\langle S^\pm(K,C_f)^2 \right\rangle_{\FF_d}=  \sum_{h_1,h_2=1}^{p-1} \left\langle S^{\pm}(K, f, \psi^{h_1}) S^{\pm}(K, f, \psi^{h_2})\right\rangle_{\FF_d}.
\end{eqnarray*}
Notice that by Theorem \ref{covariance}, the mixed average contributes $\frac{1}{\pi^2}\log(K|\mathcal I|)+O(1)$ for each term where $h_1=h_2$ or $h_1=p-h_2$. The proof for $\left\langle S^+(K,C_f)S^-(K,C_f)\right\rangle_{\FF_d}$ is identical.
\end{proof}

\subsection{General moments}\label{genmom}

Let $n, k_1, \dots, k_n$ be positive integers, let $e_1, \dots, e_n$ take values $\pm 1$ and let \\$h_1, \dots, h_n$ be integers such that $p\nmid h_i$, $1\leq i\leq n$.
Let ${\bf k} = (k_1, \dots, k_n)$, ${\bf e} = (e_1, \dots, e_n)$, and ${\bf h}=(h_1,\dots, h_n)$.
Let $\alpha_i \in \F_{q^{k_i}}$, $1 \leq i \leq n$, and let ${\bm \alpha}=(\alpha_1,\dots,\alpha_n)$.
Let $\FF_d$ be any of the families under consideration. Then,
we define
\begin{eqnarray*}
m_{n}^{{\bf k}, {\bf e}, {\bf h}}({\bm \alpha}) &=& \left< \psi(e_1 h_1 \tr_{k_1}f(\alpha_1) + \dots +
e_nh_n \tr_{k_{n}} f(\alpha_n)) \right>_{\FF_d, {\bm \alpha}}\\
&=& \frac{1}{|\FF_d|} \sum_{{f \in \FF_d} \atop {f(\alpha_i) \neq \infty, 1 \leq i \leq n}} \psi(e_1 h_1 \tr_{k_1}f(\alpha_1) + \dots +
e_nh_n \tr_{k_{n}} f(\alpha_n)),
\end{eqnarray*}
and
$$M_{n}^{{\bf k}, {\bf e}, {\bf h}} =
 \sum_{{\alpha_i \in \F_{q^{k_i}}} \atop {i=1, \dots, n}}
q^{-(k_1 + \dots + k_n)/2} m_{n}^{{\bf k}, {\bf e}, {\bf h}}({\bm \alpha}).$$

\begin{lem} \label{generalcase} Let $\FF_d$ be any of the families under consideration.
Let $C_1, \dots, C_s$ be the distinct conjugacy classes of
the $\alpha_1, \dots, \alpha_n$. Let $u_i$ be the degree of the elements of $C_i$.
For $i=1, \dots, s$, let
$$\eta_i = \frac{1}{u_i} \sum_{\alpha_j \in C_i} {e_j} h_j k_j .$$
Then
$$m_{n}^{{\bf k}, {\bf e}, {\bf h}}({\bm \alpha}) = \begin{cases} \displaystyle
  E_{\FF_d}(u_1, \dots, u_s) +O \left( q^{u_1 + \dots + u_s - d/2} \right) & \mbox{if $p \mid \eta_i$ for $1 \leq i \leq s$}, \\
 \\
O \left( q^{u_1 + \dots + u_s - d/2} \right) & \mbox{otherwise}. \end{cases}$$
\end{lem}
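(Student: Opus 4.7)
The plan is to reverse the order of summation, grouping the $\alpha_j$'s according to the conjugacy class they belong to. For each class $C_i$ fix a representative $\tilde\alpha_i$; if $\alpha_j\in C_i$ then $\alpha_j=\sigma_j(\tilde\alpha_i)$ for some $\sigma_j\in\Gal(\overline{\F_q}/\F_q)$, and since $f$ has coefficients in $\F_q$ we get $f(\alpha_j)=\sigma_j(\beta_i)$, where $\beta_i := f(\tilde\alpha_i)\in\F_{q^{u_i}}$. In particular, prescribing $\beta_i$ simultaneously prescribes $f(\alpha_j)$ for every $\alpha_j\in C_i$. Using Galois-invariance of the absolute trace together with the identity $\tr_{k_j}=(k_j/u_i)\,\tr_{u_i}$ on $\F_{q^{u_i}}$ (valid because $u_i\mid k_j$ whenever $\alpha_j\in C_i$), each summand $e_jh_j\tr_{k_j}(f(\alpha_j))$ with $\alpha_j\in C_i$ reduces to $(e_jh_jk_j/u_i)\,\tr_{u_i}(\beta_i)$, so that the whole exponent collapses to $\sum_{i=1}^{s}\eta_i\,\tr_{u_i}(\beta_i)$.

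Next, I would exchange the order of summation so that the outer sum runs over $(\beta_1,\dots,\beta_s)\in\F_{q^{u_1}}\times\cdots\times\F_{q^{u_s}}$ and the weight attached to each tuple is $|\FF_d(\tilde\alpha_1,\dots,\tilde\alpha_s,\beta_1,\dots,\beta_s)|/|\FF_d|$. By the multi-point formula \eqref{npoints} of Lemma \ref{formulasforvalues} applied at the pairwise non-conjugate representatives (together with its compatibility clause, which is precisely what forces $f(\alpha_j)=\sigma_j(\beta_{i(j)})$ for the remaining $\alpha_j$), this ratio equals $E_{\FF_d}(u_1,\dots,u_s)\,q^{-(u_1+\cdots+u_s)}+O(q^{-d/2})$, uniformly in the $\beta_i$. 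Summing the error term over the $q^{u_1+\cdots+u_s}$ tuples contributes the announced remainder $O(q^{u_1+\cdots+u_s-d/2})$.

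It then remains to evaluate
\[
\sum_{\beta_1,\dots,\beta_s}\psi\!\left(\sum_{i=1}^{s}\eta_i\,\tr_{u_i}(\beta_i)\right),
\]
which by Lemma \ref{keypoint} equals $q^{u_1+\cdots+u_s}$ when $p\mid\eta_i$ for every $i$, and vanishes otherwise. Multiplying by the main term above gives $E_{\FF_d}(u_1,\dots,u_s)$ in the divisibility case and $0$ outside it, up to the stated error.

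The only point requiring some care is the bookkeeping when several $\alpha_j$'s lie in a common class $C_i$: one must check that summing $\beta_i$ over $\F_{q^{u_i}}$ genuinely enumerates all admissible joint values of $(f(\alpha_j))_{\alpha_j\in C_i}$ (this is exactly the Galois-compatibility clause of Lemma \ref{formulasforvalues}), and that the integers $k_j/u_i$ make the residue of $\eta_i$ modulo $p$ well-defined. Beyond this combinatorial verification, the argument is a direct application of the multi-point counts, stated uniformly for the three families $\mathcal{F}_d^{\ord}$, $\mathcal{F}_d^{\full}$, and $\mathcal{F}_d^{v}$.
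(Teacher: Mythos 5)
Your proposal is correct and follows essentially the same route as the paper's proof: collapse the exponent class by class via Galois invariance of the absolute trace and the identity $\tr_{k_j}=(k_j/u_i)\tr_{u_i}$, reverse the order of summation to sum over $(\beta_1,\dots,\beta_s)$, apply the multi-point count \eqref{npoints} of Lemma~\ref{formulasforvalues} uniformly in $\beta$, and finish with Lemma~\ref{keypoint}. The "bookkeeping" point you flag as needing care is indeed handled exactly by the Galois-compatibility clause of Lemma~\ref{formulasforvalues}, so there is no gap.
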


\begin{proof}
Renumbering, suppose that $\alpha_i \in C_i$ for $1 \leq i \leq s$.
Since $\tr_{k_i}f(\alpha_i)=\frac{k_i}{u_i} \tr_{u_i} f(\alpha_i)$ for $i=1, \dots, s$,
by the definition of  $\eta_i$, we have that
\begin{eqnarray*}
m_{n}^{{\bf k}, {\bf e}, {\bf h}}({\bm \alpha}) &=& \frac{1}{|\FF_d|}
\sum_{{f \in \mathcal{F}_d} \atop {f(\alpha_i) \neq \infty, 1 \leq i \leq n}} \psi \left( e_1 h_1 \tr_{k_1} f(\alpha_1) + \dots + e_nh_n \tr_{k_n}f(\alpha_n) \right) \\
& =& \frac{1}{|\FF_d|}
\sum_{{f \in \mathcal{F}_d} \atop {f(\alpha_i) \neq \infty, 1 \leq i \leq n}} \psi \left( \eta_1 \tr_{u_1} f(\alpha_1) + \dots + \eta_s \tr_{u_s}f(\alpha_s) \right) \\
&=& \sum_{\beta_i \in \F_{q^{u_i}}, \; 1 \leq i \leq s}
 \psi \left( \eta_1 \tr_{u_1} \beta_1 + \dots + \eta_s \tr_{u_s} \beta_s \right)
 \frac{| \FF_d(\alpha_1, \dots, \alpha_s, \beta_1, \dots, \beta_s)|}{| \FF_d|} \\
 &=& \frac{E_{\FF_d}(u_1, \dots, u_s)}{q^{u_1+\dots+u_s}}
 \sum_{\beta_i \in \F_{q^{u_i}}, \; 1 \leq i \leq s}
 \psi \left( \eta_1 \tr_{u_1} \beta_1 + \dots + \eta_s \tr_{u_s} \beta_s \right)
 + O \left( q^{u_1+\dots+u_s -d/2} \right)
\end{eqnarray*}
by Lemma \ref{formulasforvalues}. The result now follows from Lemma \ref{keypoint}.
\end{proof}

\begin{lem} \label{generalcase2}
$M_{n}^{{\bf k}, {\bf e}, {\bf h}}$ is bounded  by a sum  of terms
$q^{-(k_1 + \dots + k_n)/2} T(k_1, \dots, k_n)$, where each  $T(k_1, \dots, k_n)$ is
a product of elementary terms of the
type
\[ \sum_{{{m\mid (j_1,\dots,j_r)}\atop{mp\mid \sum_{i=1}^r e_i h_i j_i}}}  \pi(m) m^r \]
such that the indices $j_1, \dots, j_r$ of the elementary terms appearing in each $T(k_1, \dots, k_n)$ are in bijection with $k_1, \dots, k_n$.

For $n=2\ell$ even, let $N_{n}^{{\bf k}, {\bf e}, {\bf h}}$ be the sum of all possible terms
$q^{-(k_1 + \dots + k_n)/2} T(k_1, \dots, k_n)$ where the $T(k_1, \dots, k_n)$  are made exclusively of the following nested sums
\begin{equation} 
\sum_{{m_1\mid (j_1,j_{\ell+1})}\atop{m_1p\mid e_1h_1 j_{\ell+1}+ e_{\ell+1}h_{\ell+1} j_{\ell+1}}} \pi(m_1) m_1^2 \dots \sum_{{m_\ell\mid (j_\ell,j_{2\ell})}\atop{m_\ell p\mid e_\ell h_\ell j_{2\ell}+ e_{2\ell}h_{2\ell} j_{2\ell}}} \pi(m_\ell) m_\ell^2 E_{\mathcal{F}_d}(m_1,\dots,m_\ell).
\end{equation}

If $n=2\ell+1$ is odd, let $N_{n}^{{\bf k}, {\bf e}, {\bf h}}$ be the sum of all possible terms $q^{-(k_1 + \dots + k_n)/2} T(k_1, \dots, k_n)$ where $T(k_1, \dots, k_n)$ are made exclusively of
the following nested sums
\begin{eqnarray*} 
&&\sum_{{m_1\mid (j_1,j_{\ell+1})}\atop{m_1p\mid e_1h_1 j_{\ell+1}+ e_{\ell+1}h_{\ell+1} j_{\ell+1}}} \pi(m_1) m_1^2 \dots \sum_{{m_\ell\mid (j_\ell,j_{2\ell})}\atop{m_\ell p\mid e_\ell h_\ell j_{2\ell}+ e_{2\ell}h_{2\ell} j_{2\ell}}} \pi(m_\ell) m_\ell^2 \sum_{{m_{\ell+1}\mid j_{2\ell+1}}\atop{m_{\ell + 1}p\mid e_{2\ell+1}h_{2\ell+1} j_{2\ell+1}}} \pi(m_{\ell+1}) m_{\ell+1}\\
&&\times E_{\mathcal{F}_d}(m_1,\dots,m_\ell, m_{\ell+1}).
\end{eqnarray*}

Let $L_{n}^{{\bf k}, {\bf e}, {\bf h}}$ be the sum of all the  other  terms $q^{-(k_1 + \dots + k_n)/2} T(k_1, \dots, k_n)$ as defined above.
Then, $$M_{n}^{{\bf k}, {\bf e}, {\bf h}} = N_{n, d}^{{\bf k}, {\bf e}, {\bf h}} + O \left( L_{n}^{{\bf k}, {\bf e}, {\bf h}} \right) + O \left( q^{3(k_1+\dots+k_n)/2 - d/2} \right).$$
\end{lem}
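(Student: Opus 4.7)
The plan is to reduce $M_n^{{\bf k}, {\bf e}, {\bf h}}$ to a sum over the possible Galois conjugacy patterns of the tuple $\bm\alpha = (\alpha_1, \dots, \alpha_n)$ and apply Lemma~\ref{generalcase} to each pattern separately. First I would substitute the estimate of Lemma~\ref{generalcase} into the definition of $M_n^{{\bf k}, {\bf e}, {\bf h}}$. The contribution of the uniform error $O(q^{u_1+\dots+u_s-d/2})$, after summing over all $\bm\alpha$ and multiplying by $q^{-(k_1+\dots+k_n)/2}$, is bounded by
\[
q^{-(k_1+\dots+k_n)/2}\cdot q^{k_1+\dots+k_n}\cdot q^{k_1+\dots+k_n-d/2}=q^{3(k_1+\dots+k_n)/2-d/2},
\]
since there are at most $q^{k_1+\dots+k_n}$ tuples and any conjugacy pattern satisfies $u_1+\dots+u_s\leq k_1+\dots+k_n$. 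This accounts for the third error term in the statement.

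Next I would organize the remaining main contribution by the partition $\mathcal{P}=\{B_1,\dots,B_s\}$ of $\{1,\dots,n\}$ induced by Galois conjugacy, i.e.\ $i\sim j$ iff $\alpha_i$ and $\alpha_j$ are conjugate. For a fixed $\mathcal{P}$ and a choice of common degree $m_j$ for each block $B_j$ (necessarily $m_j\mid\gcd_{i\in B_j}k_i$), the number of tuples realizing these data is $\prod_{j=1}^{s}\pi(m_j)\,m_j^{|B_j|}$: choose one monic irreducible polynomial of degree $m_j$ for the common class, then one of its $m_j$ roots for each $i\in B_j$. The condition $p\mid\eta_j$ of Lemma~\ref{generalcase} becomes exactly $m_j p\mid\sum_{i\in B_j}e_ih_ik_i$, and the joint weight $E_{\FF_d}(u_1,\dots,u_s)$ becomes the claimed $E_{\FF_d}(m_1,\dots,m_s)$. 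Summing over the $m_j$'s produces, for each partition $\mathcal{P}$, a product of elementary sums of precisely the form stated, one per block, with the indices of each elementary sum in bijection with the $k_i$'s of its block.

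Finally I would separate those partitions in which every block has size $2$ (when $n=2\ell$) or every block has size $2$ except for a single singleton (when $n=2\ell+1$) from all others. After relabelling so that the pairs read $\{i,\ell+i\}$ and the singleton (if any) is $\{2\ell+1\}$, the contributions of these partitions reproduce exactly the nested sums defining $N_n^{{\bf k}, {\bf e}, {\bf h}}$; every remaining partition — one with at least one block of size $\geq 3$ or, in the odd case, more than one block of odd size — is by construction absorbed into $O(L_n^{{\bf k}, {\bf e}, {\bf h}})$. The main obstacle, and essentially the only non-routine point, is the combinatorial bookkeeping: matching each conjugacy pattern to the canonical indexing used in the definitions of $N_n$ and $L_n$ and verifying that no partition is double-counted or placed on the wrong side. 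All quantitative estimates follow directly from Lemma~\ref{generalcase}.
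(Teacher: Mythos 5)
Your overall approach is the same as the paper's: substitute Lemma~\ref{generalcase} into the definition of $M_n^{{\bf k},{\bf e},{\bf h}}$, bound the uniform error term by $q^{3(k_1+\dots+k_n)/2-d/2}$, organize the main contribution by the partition of $\{1,\dots,n\}$ induced by Galois conjugacy, and count the tuples for each partition by choosing one irreducible of degree $m_j$ per block and then one of its $m_j$ roots per index in the block. The way you get the elementary sums from the condition $p\mid\eta_j$ via Lemma~\ref{keypoint} and the identification of pairing partitions with $N_n^{{\bf k},{\bf e},{\bf h}}$ is also exactly what the paper does.

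The genuine gap is the one you yourself wave away as ``combinatorial bookkeeping.'' Your count $\prod_j \pi(m_j)\,m_j^{|B_j|}$ is exact only if you count tuples for which indices in the same block are conjugate, \emph{without} requiring indices in different blocks to be non-conjugate. If that is what you count, then summing over all partitions $\mathcal{P}$ is not a disjoint decomposition of the sum over $\bm\alpha\in\mathcal{A}$: a tuple whose conjugacy structure is coarser than $\mathcal{P}$ is counted by every refinement $\mathcal{P}$ of it, so you only get an upper bound, not the asserted equality $M = N_n + O(L_n) + O(q^{3(\sum k_i)/2-d/2})$. If instead you insist that the conjugacy structure be \emph{exactly} $\mathcal{P}$, then your count $\prod_j \pi(m_j)\,m_j^{|B_j|}$ over-counts because distinct blocks might choose the same irreducible; you then need to subtract off these degenerate cases. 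The paper resolves this by inclusion--exclusion: the exact count for the pairing partition equals the nested sum defining $N_n$ minus correction terms, and those correction terms are exactly terms of the shape appearing in $L_n$ (coming from coarser partitions). That is the step you assert exists but do not carry out, and it is precisely what is needed to upgrade the upper-bound statement (your first paragraph) to the $N_n + O(L_n)$ decomposition claimed in the second part of the lemma.
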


\begin{proof} Using Lemma \ref{generalcase}, we first write
$$
M_{n}^{{\bf k}, {\bf e}, {\bf h}} = q^{-(k_1+\cdots +k_n)/2}
\sum_{{{\alpha_i \in \F_{q^{k_i}},} \; {i=1, \dots, n}} \atop {(\alpha_1, \dots, \alpha_n) \in \mathcal{A}}}
 E_{\FF_d}(u_1, \dots, u_s) +
O \left(
q^{3(k_1 + \dots + k_n)/2 - d/2} \right),$$
where the set $\mathcal{A}$ of admissible $(\alpha_1, \dots, \alpha_n)$ are those
where $p \mid \eta_i, \;i=1, \dots, s$. To count the number of admissible
$(\alpha_1, \dots, \alpha_n)$, we first fix a partition of $\{ 1, \dots, n \}$ in
$s$ classes $C_1, \dots C_s$.  Let $k(C_w)$ be the gcd of the $k_i$ such that $i \in C_w$ and let $\delta(C_w)=\sum_{i\in C_w} e_i h_i k_i$.
Then, for any such partition, the number of $(\alpha_1, \dots, \alpha_n) \in \F_{q^{k_1}}
\times \dots \times \F_{q^{k_n}}$ such that $\alpha_i$ and $\alpha_j$ are conjugate when
$i,j$ are in the same class $C_w$ and
which are counted in $\mathcal{A}$ is bounded by
\begin{eqnarray} \label{firstcount}
\prod_{i=1}^s \sum_{{{m\mid k(C_i)}\atop{mp \mid \delta(C_i)}}}  \pi(m)m^{|C_i|},
\end{eqnarray}
where we have used the fact that the number of $(\alpha_1, \dots, \alpha_t) \in
\F_{q^{k_1}} \times \dots \times \F_{q^{k_t}}$ which are conjugate over $\F_q$
is given by
$$\sum_{m \mid (k_1, \dots, k_t)} \pi(m) m^t.$$
Since $E_\FF(u_1, \dots, u_s) \ll 1$ by Remark \ref{Eestimate}, we get the first result of
the statement by
summing \eqref{firstcount} over all partitions of $\{ 1, \dots, n \}$ in
$s$ classes $C_1, \dots C_s$.

Suppose that $n=2\ell$ is even. Then, using inclusion-exclusion, the number of $(\alpha_1, \dots, \alpha_n)
\in \F_{q^{k_1}}\times \cdots \times \F_{q^{k_n}}$
such that $\alpha_i$ and $\alpha_{j}$ are conjugate, if and only if  $i \equiv j (\bmod \ell)$  can be written as
$$
\left( \sum_{{{m_1\mid (k_1,k_{\ell+1})\atop{m_1p\mid e_1 h_1 k_1 + e_{\ell+1} h_{\ell+1}k_{\ell+1}}}}} \pi(m_1) m_1^2 \dots
\sum_{{{m_\ell\mid (k_{\ell},k_{2 \ell})}\atop{m_\ell p\mid e_\ell h_\ell k_\ell + e_{2 \ell} h_{2\ell}k_{e \ell}}}} \pi(m_\ell) m_\ell^2 E_{\FF_d}(m_1, \dots, m_\ell)\right) + S(k_1, \dots, k_n)
$$
where $S(k_1, \dots, k_n)$ is a sum of terms in  $L_{n}^{{\bf k}, {\bf e}, {\bf h}}$.
(We have to do inclusion-exclusion to remove the cases where conjugate values of
$\alpha$ belong to two different classes $C_w$.)

The case of $n=2\ell+1$ follows similarly, taking into account
that one has to multiply by the factor $\displaystyle q^{-k_n/2} \sum_{{{m\mid k_n}\atop{mp\mid e  k_n}}}
\pi(m) m$.
\end{proof}

\begin{thm}\label{thm:Smoments}
Let $\mathcal{F}_d$ be any of the families under consideration. For any $K$ with $1/|\mathcal I| <K< d/n$
\[\left< S^\pm(K, f, \psi)^{n} \right>_\FFd= \left\{\begin{array}{ll}\frac{(2 \ell)!}{\ell! (2\pi^2)^\ell}  \log^\ell(K|\mathcal{I}|) \left(1+O\left(\log^{-1}(K|\mathcal{I}|)\right)\right) & n=2\ell,\\ \\
O\left(\log^\ell(K|\mathcal{I}|)\right) & n=2\ell+1. \end{array}\right. \]
More generally, let $0< h_1,\dots, h_n\leq (p-1)/2$. Then for any $K$ with $1/|\mathcal I| <K< d/n$,
\begin{eqnarray*}
\left< S^\pm(K, f, \psi^{h_1})\dots S^\pm(K, f, \psi^{h_n})  \right>_\FFd=
\begin{cases}
\frac{\Theta(h_1,\dots,h_n)}{(2\pi^2)^\ell}  \log^\ell(K|\mathcal{I}|) \left(1+O\left(\log^{-1}(K|\mathcal{I}|)\right)\right)
& n = 2 \ell, \\ \\
O\left(\log^\ell(K|\mathcal{I}|)\right)
& n = 2 \ell + 1. \end{cases} \end{eqnarray*}
The constant $\Theta(h_1,\dots,h_{n})$ is given by
\[\# \{(e_1,\dots,e_{n})\in \{-1,1\}, \sigma \in \mathbb{S}_{n}  \;:\; e_1h_{\sigma(1)}+e_2h_{\sigma(2)}\equiv \dots \equiv e_{2\ell-1}h_{\sigma(2\ell-1)}+e_{2\ell}h_{\sigma(2\ell)}\equiv 0 \, (\mathrm{mod}\, p)  \}\]
where $\mathbb{S}_{n}$ denotes the permutations of the set of $n$ elements.
\end{thm}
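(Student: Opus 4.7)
The plan is to expand each factor $S^\pm(K, f, \psi^{h_i})$ using its definition, interchange the sum with the average over $\FF_d$, and thereby recast the $n$-th moment as
\[ \left\langle \prod_{i=1}^n S^\pm(K, f, \psi^{h_i}) \right\rangle_{\FF_d} = \sum_{\mathbf{k} \in \{1,\ldots,K\}^n} \sum_{\mathbf{e} \in \{\pm 1\}^n} \left( \prod_{i=1}^n \widehat{I}^\pm_K(e_i k_i) \right) M_n^{\mathbf{k}, \mathbf{e}, \mathbf{h}}. \]
Lemma \ref{generalcase2} then writes $M_n^{\mathbf{k}, \mathbf{e}, \mathbf{h}}$ as the paired main term $N_n^{\mathbf{k}, \mathbf{e}, \mathbf{h}}$ plus the unbalanced-partition remainder $O(L_n^{\mathbf{k}, \mathbf{e}, \mathbf{h}})$ plus the global tail $O(q^{3(k_1 + \cdots + k_n)/2 - d/2})$. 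Summed against the Fourier weights, the global tail contributes $O(K^n q^{3nK/2 - d/2})$, which is $o(1)$ under the hypothesis $K < d/n$ (with a suitable absolute constant).

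To handle the main term $N_n^{\mathbf{k}, \mathbf{e}, \mathbf{h}}$, I would apply Proposition \ref{recordresult} iteratively, one pair at a time. For $n = 2\ell$, $N_n$ is a sum over perfect matchings of $\{1, \ldots, n\}$ of $\ell$-fold nested paired divisibility sums, each pair $\{i, j\}$ carrying weights $\widehat{I}^\pm_K(e_i k_i) \widehat{I}^\pm_K(e_j k_j)$; by Proposition \ref{recordresult}, such a pair contributes $\frac{1}{2\pi^2}\log(K|\mathcal{I}|) + O(1)$ when $p \mid (e_i h_i + e_j h_j)$ and $O(1)$ otherwise. The coefficient $E_{\FF_d}(m_1, \ldots, m_\ell)$ differs from $1$ by $O(\sum m_i q^{-m_i})$ by Remark \ref{Eestimate}, and this discrepancy is absorbed into the $O(\log^{\ell - 1})$ correction. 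Summing over all perfect matchings and over sign vectors compatible with each matching then yields the leading constant $\Theta(h_1, \ldots, h_n)/(2\pi^2)^\ell$, giving the asserted main term.

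For the remainder $L_n^{\mathbf{k}, \mathbf{e}, \mathbf{h}}$, I would use that every non-matching partition of $\{1, \ldots, n\}$ contains at least one block which is not a pair. A singleton $\{i\}$ forces $p \mid k_i$ and produces a weighted sum $q^{-k_i/2}\sum_{m \mid k_i,\, mp \mid e_i h_i k_i} \pi(m) m$, which is $O(1)$ by Proposition \ref{propmanysums}; a block of size $r \geq 3$ produces a sum with genuine exponential decay $q^{-(r/2 - 1)k}$, also $O(1)$ by the same proposition. Each non-pair block thus replaces one potential $\log$ factor by $O(1)$. For even $n = 2\ell$, the extremal configuration ($\ell - 1$ pairs plus two singletons) gives $L_n = O(\log^{\ell - 1}(K|\mathcal{I}|))$, absorbed into the $O(\log^{-1})$ correction; for odd $n = 2\ell + 1$ no perfect matching exists, and the extremal configuration ($\ell$ pairs plus one singleton) gives the stated bound $O(\log^\ell(K|\mathcal{I}|))$.

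The main technical obstacle is the combinatorial accounting of the main term: for each perfect matching of $\{1, \ldots, n\}$, one must count the sign vectors $\mathbf{e}$ obeying the pairwise compatibility $p \mid (e_i h_i + e_j h_j)$, and verify that the total reproduces the constant $\Theta(h_1, \ldots, h_n)$ in the prescribed form. One must also check that the diagonal constraints $k_i = k_j$ produced by each pair's main term remain independent across distinct pairs, so that no spurious collapse of variables inflates the order of the main term. Both points are handled by unfolding the nested iteration of Proposition \ref{recordresult} and propagating the $O(\log^{-1})$ correction multiplicatively through the $\ell$ pair factors.
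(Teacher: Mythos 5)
Your proposal follows the paper's proof essentially line for line: the same expansion in terms of $M_n^{{\bf k},{\bf e},{\bf h}}$, the same appeal to Lemma \ref{generalcase2} to split off the paired main term $N_n$, the remainder $L_n$, and the global tail, the same iteration of Proposition \ref{recordresult} over pairs, the same treatment of singleton and size-$\geq 3$ blocks (the latter being Lemma \ref{rbig}), and the same combinatorial accounting that produces $\Theta(h_1,\dots,h_n)$. One small imprecision: attributing $E_{\FF_d}(m_1,\dots,m_\ell)=1+O(\sum_i m_i q^{-m_i})$ to Remark \ref{Eestimate} is only valid for $\mathcal F_d^{\ord}$ and $\mathcal F_d^{\full}$ where $E$ factors multiplicatively; for $\mathcal F_d^v$ the relevant observation (as in the paper) is instead that $E\neq 1$ only when some $m_j$ equals one of the finitely many fixed $r_i$, which still confines the discrepancy to the $O(\log^{\ell-1}(K|\mathcal I|))$ correction.
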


\begin{proof}
We have that
\[
\left< S^\pm(K, f, \psi^{h_1})\dots S^\pm(K, f, \psi^{h_n})  \right>_{\FF_d} =\sum_{{k_1, \dots, k_n =1}\atop{e_1, \dots, e_n = \pm 1}}^K
I_K^{\pm}(e_1 k_1) \dots I_K^{\pm}(e_n k_n) M_{n}^{{\bf k}, {\bf e}, {\bf h}},
\]
and we use Lemma \ref{generalcase2} to replace $M_{n}^{{\bf k}, {\bf e}, {\bf h}}$ in the sum.
The error term satisfies
\begin{eqnarray*}\sum_{{k_1, \dots, k_n =1}\atop{e_1, \dots, e_n = \pm 1}}^K
I_K^{\pm}(e_1 k_1) \dots I_K^{\pm}(e_n k_n) O \left( q^{3(k_1 + \dots +k_n)/2 - d/2} \right)
\ll \left( \sum_{k=1}^K q^{3k/2 - d/2n} \right)^n \ll 1
\end{eqnarray*}
when $d > 3nK$.

For the main term, we have to consider the sum of the terms
$T(k_1, \dots, k_n)$ from Lemma \ref{generalcase2}. For each fixed $T(k_1, \dots, k_n)$, we write the sum
over $k_1, \dots, k_n$
as $s$ nested sums $\Sigma_1 \dots \Sigma_s E_{\mathcal{F}_d}(m_1,\dots, m_s)$ where $\Sigma_w$ is a sum over the $k_i$ such that $i \in C_w$, and $| E_{\mathcal{F}_d}(m_1,\dots, m_s)| \ll 1$.
If $|C_w|=1$, then we have a sum
\begin{eqnarray} \sum_{k=1}^K \widehat{I}_K^{\pm}(k) q^{-k/2} \sum_{{m \mid k} \atop{m p \mid e_k}} \pi(m) m \ll 1,
\end{eqnarray}
because of Theorem \ref{boundforSpm}. For $r = |C_w|  \geq 2$, we have a sum of the type
\begin{eqnarray*} \sum_{k_1,\dots,k_r=1}^K\widehat{I}_K^\pm(e_1 k_1) \dots \widehat{I}_K^\pm(e_r k_r)q^{-(k_1+\dots+k_r)/2}\sum_{{m\mid (k_1,\dots,k_r)}\atop{mp \mid  \sum_{i=1}^r e_i h_ik_i}} \pi(m)m^r.
\end{eqnarray*}

When $r=|C_w|>2$, we will show in Lemma \ref{rbig} that the contribution from the terms of the sum over $k_1, \dots, k_r$ is
bounded. Assuming this result, we have by Lemma \ref{generalcase2}
that the leading term in  $S^{\pm}(K, f, \psi)^n$
will come from the contributions $N_{n, d}^{{\bf k}, {\bf e}, {\bf h}}$.

If $n=2\ell$, the leading terms are of the form
\begin{eqnarray*} &&\sum_{k_1,\dots,k_r=1}^K\widehat{I}_K^\pm(e_1 k_1) \dots \widehat{I}_K^\pm(e_r k_r)q^{-(k_1+\dots+k_r)/2} 
 \\
&&\times
\sum_{{m_1\mid (k_1,k_{\ell+1})}\atop{m_1p\mid e_1h_1 k_{\ell+1}+ e_{\ell+1}h_{\ell+1} k_{\ell+1}}} \pi(m_1) m_1^2 \dots \sum_{{m_\ell\mid (k_\ell,k_{2\ell})}\atop{m_\ell p\mid e_\ell h_\ell k_{2\ell}+ e_{2\ell}h_{2\ell} k_{2\ell}}} \pi(m_\ell) m_\ell^2 E_{\mathcal{F}_d}(m_1,\dots,m_\ell)
\end{eqnarray*}
By Definition \ref{definitionE} and Remark \ref{Eestimate} combined with  Proposition \ref{recordresult}, for $\mathcal{F}_d=\mathcal{F}_d^{\ord}, \mathcal{F}_d^{\full}$ the above sum gives
$$\left( \frac{1}{2\pi^2} \log{\left( K|\mathcal{I}| \right)} \right)^\ell.$$
For $\mathcal{F}^v_d$, we have that $E_{\mathcal{F}_d}(m_1,\dots,m_\ell)=1$ unless some of the $m_j$'s equal some of the $r_i$'s. Since the $r_i$'s are fixed constants, this simply introduces
an error term of the form $O\left(\log^{\ell-1}{\left( K|\mathcal{I}| \right)} \right)$ which does not change the final result.

If $n=2\ell+1$, the leading terms are of the
form
$$
O \left(\log^\ell{\left(K |\mathcal{I}|\right)} \right).
$$

The final coefficient is obtained by counting the numbers of ways to choose the $\ell$  coefficients $k_i$'s with positive sign ($e_i=1$) and to pair them with those with negative sign ($e_j=-1$).
\end{proof}

\begin{lem} \label{rbig} Let $r>2$, then
\[S:=\sum_{k_1,\dots,k_r=1}^K\widehat{I}_K^\pm(k_1) \dots \widehat{I}_K^\pm(k_r)q^{-(k_1+\dots+k_r)/2}\sum_{{m\mid (k_1,\dots,k_r)}\atop{mp\nmid (k_1,\dots,k_r)}} \pi(m)m^r=O(1)\]
\end{lem}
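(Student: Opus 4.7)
The plan is to reduce this multi-index sum to a single sum in $k_1$ by exploiting symmetry together with the fact that $d:=(k_1,\dots,k_r)\leq \min_j k_j$. First, since $\pi(m)\ll q^m/m$ by \cite[Theorem 2.2]{rosen}, dropping the auxiliary condition $mp\nmid d$ gives
\[
\sum_{\substack{m\mid d \\ mp\nmid d}}\pi(m)m^r \;\leq\; \sum_{m\mid d} m^{r-1} q^m \;\leq\; d^{r-1}\sum_{m=1}^{d} q^m \;\ll\; d^{r-1} q^d,
\]
where the last step is the elementary observation that a finite geometric series in $q$ is dominated by its largest term. Using $d\leq \min_j k_j$, I obtain the cleaner bound $d^{r-1}q^d\leq (\min_j k_j)^{r-1}q^{\min_j k_j}$.

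Next, by the symmetry of the outer sum in the variables $k_1,\dots,k_r$, and at the cost of a multiplicative factor of $r!=O(1)$, I can assume $k_1\leq k_2\leq\cdots\leq k_r$, so that $\min_j k_j=k_1$. Using the uniform bound $\widehat{I}_K^\pm(k)\ll 1$ (which follows from property (e) in Section \ref{trigapprox}), the sum becomes
\[
S \;\ll\; \sum_{k_1\leq \cdots\leq k_r} k_1^{r-1}\, q^{\,k_1-(k_1+\cdots+k_r)/2} \;\leq\; \sum_{k_1=1}^{K} k_1^{r-1} q^{k_1/2} \prod_{j=2}^{r}\Bigl(\sum_{k_j\geq k_1} q^{-k_j/2}\Bigr) \;\ll\; \sum_{k_1=1}^{K} k_1^{r-1} q^{-(r-2)k_1/2},
\]
since each of the $r-1$ inner geometric sums contributes a factor of $\ll q^{-k_1/2}$.

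Finally, for $r>2$ the exponent $-(r-2)/2$ is strictly negative, so the remaining series in $k_1$ converges absolutely and is bounded independently of $K$, giving $S=O(1)$. The only mildly delicate step is the divisor-sum estimate at the start, but once it is collapsed to a geometric series in $q$, the rest of the argument is mechanical; the hypothesis $r>2$ is used in exactly one place, namely to ensure the final series in $k_1$ decays exponentially rather than diverges. No other properties of the Beurling--Selberg coefficients beyond their uniform boundedness are required.
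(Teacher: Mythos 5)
Your proof is correct, and it streamlines the paper's argument. The paper assumes $k_1 \geq \dots \geq k_r$ and runs a three-way case analysis on the relationship between $k_1$ and $k_r$ (all equal; $k_1=2k_r$; everything else), using the observation that $(k_1,\dots,k_r)\leq k_1$, $\leq k_1/2$, or $\leq k_1/3$ respectively to get exponential decay of the form $q^{k_1}$, $q^{k_1/2}$, or $q^{k_1/3}$. It then splits $S$ into three corresponding subsums, keeps the Beurling--Selberg coefficients in place, and invokes Proposition~\ref{propmanysums} to bound each piece. You instead sort so the \emph{minimum} is $k_1$, use the single uniform bound $\sum_{m\mid d}\pi(m)m^r \ll d^{r-1}q^d \leq k_1^{r-1}q^{k_1}$, discard the Beurling--Selberg weights via $\widehat{I}_K^\pm(k)\ll 1$, and collapse the remaining $r-1$ inner sums into geometric series each contributing $\ll q^{-k_1/2}$. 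This avoids both the case analysis and the appeal to Proposition~\ref{propmanysums}; the hypothesis $r>2$ enters in the same final place in both proofs, to make $\sum_{k}k^{r-1}q^{-(r-2)k/2}$ converge. Your route is more elementary and essentially self-contained; the paper's route is heavier but recycles the same machinery (the $k_1/2$, $k_1/3$ trichotomy and Proposition~\ref{propmanysums}) that it already built for Lemma~\ref{entintrick} and Theorem~\ref{Mcovariance}, so it fits more uniformly into the surrounding arguments.

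One very minor remark: your bound $\sum_{m\mid d}m^{r-1}q^m \leq d^{r-1}\sum_{m=1}^{d}q^m$ is slightly wasteful (replacing the divisor sum by a full sum), but it costs nothing here since the geometric series is still dominated by its top term $q^d$; the paper's version keeps the divisor structure implicitly via its case split, but arrives at the same $d^{r-1}q^d$-type estimates.
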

\begin{proof}
Suppose that $k_1\geq\dots\geq k_r$.  We use repeatedly the estimates from Remark \ref{Eestimate}. If $k_1=k_r$, we have
\[\sum_{{m\mid (k_1,\dots,k_r)}\atop{mp\nmid (k_1,\dots,k_r)}} \pi(m)m^r=O\left(k_1^{r-1} q^{k_1} \right).\]
If $k_1=2k_r$, and all the other $k_i$ are equal to $k_1$ or $k_r$, we have
\[\sum_{{m\mid (k_1,\dots,k_r)}\atop{mp\nmid (k_1,\dots,k_r)}} \pi(m)m^r=O\left(k_1^{r-1} q^{k_1/2} \right).\]
In all the other cases, the estimate is
\[\sum_{{m\mid (k_1,\dots,k_r)}\atop{mp\nmid (k_1,\dots,k_r)}}  \pi(m)m^r=O\left(k_1^{r-1} q^{k_1/3} \right).\]
Putting things together, we get
\begin{eqnarray*}S&\ll &\sum_{k=1}^K\widehat{I}_K^\pm(k)^r k^{r-1}q^{-(r-2)k/2}+\sum_{\ell=1}^{r-1}\sum_{k=1}^K\widehat{I}_K^\pm(2k)^\ell  \widehat{I}_K^\pm(k)^{r-\ell}k^{r-1}q^{(1-r/2-\ell/2)k}\\
&& +\sum_{k_1,\dots,k_r=1}^K\widehat{I}_K^\pm(k_1) \dots \widehat{I}_K^\pm(k_r)k_1^{r-1}q^{-k_1/6-(k_2+\dots+k_r)/2}\\&\ll& 1\end{eqnarray*}
by Proposition \ref{propmanysums}.
\end{proof}

\begin{rem}
 We note that if $n=2\ell$,
 \begin{equation}\label{combinatorics}
 \sum_{h_1, \dots, h_n=1}^{(p-1)/2} \Theta(h_1,\dots,h_n)=\frac{(p-1)^\ell (2 \ell)!}{2^\ell\ell! }.\end{equation}
There are $\frac{(2\ell)!}{\ell!2^\ell}$ ways of choosing unordered pairs of the form $\{e_i,e_j\}$. Inside each pair, exactly one of $\{e_i,e_j\}$ is positive and the other is negative, so there are a total $2^\ell$ choices for the signs. Finally, for each pair there are $(p-1)/2$ possible values for  $h_i$ which automatically determines the value of $h_j$.
\end{rem}

\begin{rem} By Theorem \ref{thm:Smoments}, the moments are given by sums of products of covariances. Thus, they are the same as the moments of a multivariate normal distribution. Moreover, the generating function of the moments converges due to \eqref{combinatorics}. Therefore, our random variables are jointly normal. Since the variables are uncorrelated (cf.~Theorem \ref{covariance}), it follows that our random variables (for $h=1, \dots, \frac{p-1}{2}$)  are independent.
\end{rem}

Recall that
\[
S^\pm(K,C_f)=\sum_{j=1}^{p-1}S^{\pm}(K, f, \psi^j).
\]

\begin{thm}\label{thm:sumisgaussian}
Assume that $K=\mathfrak{g}/\log\log(\mathfrak{g}|\mathcal I|)$, $\mathfrak{g} \rightarrow \infty$ and either  $|\mathcal{I}|$ is fixed or $|\mathcal{I}| \rightarrow 0$
while $\mathfrak{g}|\mathcal{I}| \rightarrow \infty$.
Then
\[
\frac{S^\pm(K, C_f)}{\sqrt{\frac {2(p-1)}{\pi^2}\log(\mathfrak{g}|\mathcal{I}|)}}
\]
 has a standard Gaussian limiting distribution when $\mathfrak{g} \rightarrow \infty$.
\end{thm}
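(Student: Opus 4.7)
The plan is to prove this via the method of moments. I will show that for every fixed integer $n \geq 1$, the $n$-th moment of $S^\pm(K, C_f)/\sigma_{\mathfrak g}$, where $\sigma_{\mathfrak g}^2 := \frac{2(p-1)}{\pi^2}\log(\mathfrak g|\mathcal I|)$, converges to the $n$-th moment of the standard Gaussian---that is, $0$ for $n$ odd and $\frac{(2\ell)!}{2^\ell \ell!}$ for $n = 2\ell$. Since the standard Gaussian is determined by its moments, this yields the desired convergence in distribution.

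First I would exploit the evenness of $\widehat{I}^\pm_K$ (property (d) of Section \ref{trigapprox}): by inspection of the definition of $S^\pm(K,f,\psi)$, this forces $S^\pm(K, f, \psi^j) = S^\pm(K, f, \psi^{p-j})$ for $1 \leq j \leq p-1$, so that
\[
S^\pm(K, C_f) = \sum_{j=1}^{p-1} S^\pm(K, f, \psi^j) = 2\sum_{j=1}^{(p-1)/2} S^\pm(K, f, \psi^j).
\]
Expanding the $n$-th power and taking the $\FF_d$-average gives
\[
\langle S^\pm(K, C_f)^n \rangle_{\FF_d} = 2^n \sum_{h_1, \dots, h_n = 1}^{(p-1)/2} \langle S^\pm(K, f, \psi^{h_1}) \cdots S^\pm(K, f, \psi^{h_n}) \rangle_{\FF_d}.
\]
I then apply Theorem \ref{thm:Smoments} termwise. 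For odd $n = 2\ell+1$ each joint moment is $O(\log^\ell(K|\mathcal I|))$, so the total is $o(\sigma_{\mathfrak g}^{2\ell+1})$. For even $n = 2\ell$ the combinatorial identity \eqref{combinatorics} collapses the sum of the $\Theta$ values and yields
\[
\langle S^\pm(K, C_f)^{2\ell} \rangle_{\FF_d} = \frac{2^{2\ell}}{(2\pi^2)^\ell} \cdot \frac{(p-1)^\ell (2\ell)!}{2^\ell \ell!} \log^\ell(K|\mathcal I|)\bigl(1+o(1)\bigr) = \frac{(2\ell)!}{2^\ell \ell!} \left(\tfrac{2(p-1)}{\pi^2}\log(K|\mathcal I|)\right)^{\ell}\bigl(1+o(1)\bigr),
\]
which is exactly $(2\ell-1)!!\cdot\sigma_{\mathfrak g}^{2\ell}(1+o(1))$ provided that $\log(K|\mathcal I|)/\log(\mathfrak g|\mathcal I|) \to 1$.

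The remaining step is to check that the choice $K = \mathfrak g/\log\log(\mathfrak g|\mathcal I|)$ satisfies the hypotheses $1/|\mathcal I| < K < d/n$ of Theorem \ref{thm:Smoments} and that $\log(K|\mathcal I|)$ is asymptotic to $\log(\mathfrak g|\mathcal I|)$. The first condition holds because $K|\mathcal I| = \mathfrak g|\mathcal I|/\log\log(\mathfrak g|\mathcal I|) \to \infty$, and, in each of the three families, $d$ is linear in $\mathfrak g$ up to constants depending on the bounded number of poles, so $d/K \gg \log\log(\mathfrak g|\mathcal I|)$ exceeds any fixed $n$ eventually. The second is immediate from $\log(K|\mathcal I|) = \log(\mathfrak g|\mathcal I|) - \log\log\log(\mathfrak g|\mathcal I|) = \log(\mathfrak g|\mathcal I|)(1+o(1))$, valid in both regimes ($|\mathcal I|$ fixed and $|\mathcal I| \to 0$ with $\mathfrak g|\mathcal I| \to \infty$). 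The only real delicacy is this simultaneous balancing---$K$ must be large enough that $\log(K|\mathcal I|) \sim \log(\mathfrak g|\mathcal I|)$, yet small enough that Theorem \ref{thm:Smoments} is applicable for every fixed $n$---and the stated choice threads this needle. Combining the moment computation with the method of moments completes the proof.
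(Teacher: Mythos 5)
Your proof is correct and follows essentially the same route as the paper: halve the sum over characters via the evenness $S^\pm(K,f,\psi^j)=S^\pm(K,f,\psi^{p-j})$, expand the $n$-th power, apply Theorem~\ref{thm:Smoments} and the combinatorial identity~\eqref{combinatorics}, normalize by $\sqrt{\tfrac{2(p-1)}{\pi^2}\log(\mathfrak g|\mathcal I|)}$, and invoke the method of moments. Your explicit verification that the choice $K=\mathfrak g/\log\log(\mathfrak g|\mathcal I|)$ satisfies $1/|\mathcal I|<K<d/n$ for each fixed $n$ while still giving $\log(K|\mathcal I|)\sim\log(\mathfrak g|\mathcal I|)$ is a detail the paper leaves implicit, and it is a worthwhile addition.
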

\begin{proof}
First we compute the moments and then we normalize them.

With our choice of $K$ we have
\[
\frac{\log (K|\mathcal I|)}{ \log (\mathfrak{g}|\mathcal I |)}=1- \frac{\log\log\log(\mathfrak{g}|\mathcal I|)}{\log (\mathfrak{g}|\mathcal I |)} \rightarrow 1 \mbox{ as } \mathfrak{g}\rightarrow \infty.\]
Because of this, $\log (K|\mathcal I|)$ can be replaced by $\log (\mathfrak{g}|\mathcal I |)$ in our formulas.

Recall that $S^\pm(K, f, \psi^h)=S^\pm(K, f, \psi^{p-h})$, then
\begin{eqnarray*}
 S^\pm(K,C_f)^n=\left(2\sum_{h=1}^{(p-1)/2}S^\pm(K, f, \psi^h) \right)^n=2^n\sum_{h_1, \dots, h_n=1}^{(p-1)/2}S^\pm(K,f,\psi^{h_1})\dots S^\pm(K,f,\psi^{h_n}).
\end{eqnarray*}
Therefore, the moment is given by
\begin{eqnarray*}
\left\langle S^\pm(K, C_f)^n  \right\rangle_\FFd
&=&2^n\sum_{h_1, \dots, h_n=1}^{(p-1)/2}\langle S^\pm(K,f,\psi^{h_1})\dots S^\pm(K,f,\psi^{h_n})\rangle_\FFd.
\end{eqnarray*}
First assume that $n=2\ell$. By Theorem \ref{thm:Smoments}, this is asymptotic to
\begin{eqnarray*}
&&\frac{2^n}{(2\pi^2)^\ell}\log^\ell(\mathfrak{g}|\mathcal{I}|)\sum_{h_1, \dots, h_n=1}^{(p-1)/2} \Theta(h_1,\dots,h_n).
\end{eqnarray*}
Finally we use equation \eqref{combinatorics} to conclude that when $n=2\ell$,
\[
\left\langle S^\pm(K, C_f)^n  \right\rangle_\FFd\sim\frac{2^n(p-1)^\ell (2 \ell)!}{2^\ell \ell! (2\pi^2)^\ell}\log^\ell(\mathfrak{g}|\mathcal{I}|)=\frac{(2\ell)!}{\ell!\pi^{2\ell}} (p-1)^\ell\log^\ell(\mathfrak{g}|\mathcal{I}|).
\]
In particular, the variance is asymptotic to $\frac{2(p-1)}{\pi^2}\log(\mathfrak{g}|\mathcal{I}|)$.

Now assume that $n$ is odd, $n=2\ell+1$. Theorem \ref{thm:Smoments} yields
\begin{eqnarray*}
\left\langle S^\pm(K, C_f)^n  \right\rangle_\FFd
&=& O\left(\log^\ell (\mathfrak{g}|\mathcal{I}|) \right).
\end{eqnarray*}

Hence the normalized moment converges to
\[\lim_{\mathfrak{g}\rightarrow \infty} \frac{\left\langle S^\pm(K, C_f)^{2\ell} \right\rangle}{\left( \sqrt{\frac{2(p-1)}{\pi^2} \log (\mathfrak{g}|\mathcal I|)}\right)^{2\ell}}
= \frac{(2\ell)!}{\ell!2^\ell},\]
for $n=2\ell$, and to zero for $n$ odd. Hence, we have obtained the moments of the standard Gaussian distribution.
\end{proof}

\section{The distribution of zeroes}\label{proof}

We prove in this section that
$$ \frac{N_{\mathcal{I}}(C_f)
- 2\mathfrak{g} |\mathcal{I}|}{\sqrt{(2(p-1)/\pi^2) \log(\mathfrak{g} |\mathcal{I}|)}|}$$
converges in mean square to
$$\frac{S^{\pm}(K, C_f)}{{\sqrt{(2(p-1)/\pi^2) \log(\mathfrak{g} |\mathcal{I}|)}}}.$$
Then, using Theorem \ref{thm:sumisgaussian}, we get the result of Theorem \ref{zeroesthm}
since convergence in mean square implies convergence in distribution.

\begin{lem} Let $\mathcal{F}_d$ be any of the families under consideration. Assume that $K =\mathfrak{g}/\log \log(\mathfrak{g}|\mathcal{I}|)$, $\mathfrak{g}\rightarrow  \infty$ and either $|\mathcal{I}|$ is fixed or
$|\mathcal{I}|\rightarrow 0$ while $\mathfrak{g} |\mathcal{I}|\rightarrow \infty$. Then
\[\left< \left| \frac{N_\mathcal{I}(C_f) - 2\mathfrak{g} |\mathcal{I}| +S^{\pm}(K,C_f)}{\sqrt{(2(p-1)/\pi^2) \log (\mathfrak{g} |\mathcal{I}|)}}\right|^2 \right>_\FFd\rightarrow 0.\]
\end{lem}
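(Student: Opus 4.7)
The plan is to use the sandwich inequalities \eqref{T-estimate-allzeroes-1} to bound the numerator in terms of $S^+(K,C_f)-S^-(K,C_f)$ and the trivial error $2\mathfrak{g}/(K+1)$, then to estimate each contribution with Corollary \ref{cor:2ndmoment} and the chosen scaling of $K$.

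More precisely, consider the ``$+$'' case. From \eqref{T-estimate-allzeroes-1} we have
\[
S^+(K,C_f) - S^-(K,C_f) - \frac{2\mathfrak g}{K+1} \;\leq\; N_\mathcal{I}(C_f) - 2\mathfrak g|\mathcal I| + S^+(K,C_f) \;\leq\; \frac{2\mathfrak g}{K+1}.
\]
Hence the middle quantity, squared, is bounded by
\[
\bigl(N_\mathcal I(C_f) - 2\mathfrak g|\mathcal I| + S^+(K,C_f)\bigr)^2 \;\leq\; 2\bigl(S^+(K,C_f)-S^-(K,C_f)\bigr)^2 + 3\!\left(\frac{2\mathfrak g}{K+1}\right)^{\!2}.
\]
(The ``$-$'' case is symmetric, with the roles of the two bounds from \eqref{T-estimate-allzeroes-1} interchanged.) Averaging over $\FF_d$ reduces the lemma to estimating $\bigl\langle(S^+(K,C_f)-S^-(K,C_f))^2\bigr\rangle_{\FF_d}$ and the deterministic term $(2\mathfrak g/(K+1))^2$.

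For the first piece I would expand the square and apply Corollary \ref{cor:2ndmoment} three times:
\[
\bigl\langle (S^+-S^-)^2\bigr\rangle_{\FF_d}
= \bigl\langle (S^+)^2\bigr\rangle_{\FF_d} - 2\bigl\langle S^+ S^-\bigr\rangle_{\FF_d} + \bigl\langle (S^-)^2\bigr\rangle_{\FF_d}.
\]
Each of the three moments equals $\tfrac{2(p-1)}{\pi^2}\log(K|\mathcal I|) + O(1)$, so the leading terms cancel and what remains is $O(1)$. The hypothesis $K=\mathfrak g/\log\log(\mathfrak g|\mathcal I|)$ satisfies the requirement $1/|\mathcal I|<K<d/6$ needed to invoke Corollary \ref{cor:2ndmoment} once $\mathfrak g$ is large, so this step is immediate.

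For the deterministic piece,
\[
\left(\frac{2\mathfrak g}{K+1}\right)^{\!2} \ll \bigl(\log\log(\mathfrak g|\mathcal I|)\bigr)^2.
\]
Dividing by $\frac{2(p-1)}{\pi^2}\log(\mathfrak g|\mathcal I|)$, the two contributions give respectively $O\bigl(1/\log(\mathfrak g|\mathcal I|)\bigr)$ and $O\bigl((\log\log(\mathfrak g|\mathcal I|))^2/\log(\mathfrak g|\mathcal I|)\bigr)$, both of which vanish as $\mathfrak g\to\infty$ in either regime (fixed $|\mathcal I|$ or $\mathfrak g|\mathcal I|\to\infty$). There is no real obstacle here; the mild subtlety is merely bookkeeping to verify that the choice of $K$ simultaneously makes the truncation error $2\mathfrak g/(K+1)$ negligible compared with $\sqrt{\log(\mathfrak g|\mathcal I|)}$ while still keeping $K$ small enough ($K<d/6$) for Corollary \ref{cor:2ndmoment} to apply, and this is exactly what the choice $K=\mathfrak g/\log\log(\mathfrak g|\mathcal I|)$ delivers since $d\asymp \mathfrak g$.
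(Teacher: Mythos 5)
Your proof is correct and follows essentially the same strategy as the paper: sandwich the numerator via \eqref{T-estimate-allzeroes-1}, reduce to controlling $\langle (S^+-S^-)^2\rangle_{\FF_d}$ and the deterministic term $(2\mathfrak g/(K+1))^2$, and then use Corollary \ref{cor:2ndmoment} to see that the $\frac{2(p-1)}{\pi^2}\log(K|\mathcal I|)$ contributions from $\langle (S^\pm)^2\rangle_{\FF_d}$ and $\langle S^+S^-\rangle_{\FF_d}$ cancel exactly, leaving $O(1)$. The one minor difference is cosmetic: by invoking $(x+y)^2\le 2x^2+2y^2$ you obtain a clean bound $\ll (S^+-S^-)^2 + (2\mathfrak g/(K+1))^2$ with no cross term, thereby avoiding the first-moment estimate of Theorem \ref{boundforSpm}, whereas the paper expands $\bigl(S^--S^+ + 2\mathfrak g/(K+1)\bigr)^2$ directly and handles the cross term $\tfrac{4\mathfrak g}{K+1}\langle S^--S^+\rangle_{\FF_d}$ via that theorem; both routes are sound.
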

\begin{proof}
From  equation \eqref{T-estimate-allzeroes-1}, using the Beurling--Selberg polynomials and the explicit formula (Lemma \ref{Explicit-Formula}),
we deduce that
\begin{eqnarray*}
 \frac{-2\mathfrak{g}}{K+1} \leq N_\mathcal{I}(C_f) - 2\mathfrak{g} |\mathcal{I}|  +S^-(K, C_f)  \leq S^-(K, C_f)
- S^+(K,C_f)   +\frac{2\mathfrak{g}}{K+1} \end{eqnarray*}
and
\begin{eqnarray*}
 \frac{-2\mathfrak{g}}{K+1} \leq -N_\mathcal{I}(C_f) + 2\mathfrak{g} |\mathcal{I}|  -S^+(K, C_f)   \leq S^-(K, C_f)
- S^+(K,C_f)   + \frac{2\mathfrak{g}}{K+1}.\ \end{eqnarray*}

Using these two inequalities to bound the absolute value of the central term, we obtain
\begin{eqnarray*}
&&\left< \left ( N_\mathcal{I}(C_f) - 2\mathfrak{g} |\mathcal{I}| + S^{\pm}(K, C_f) \right)^2 \right>_\FFd\\
&\leq& \max \left \{ \left(\frac{2\mathfrak{g}}{K+1} \right)^2, \left< \left(S^{-}(K, C_f) - S^{+}(K,C_f) +\frac{2\mathfrak{g}}{K+1}\right)^2 \right>_\FFd \right\} \\
&&\leq  \left(\frac{2\mathfrak{g}}{K+1} \right)^2 \\&+& \max \left \{ 0,  \left< \left( S^{-}(K, C_f) - S^{+}(K, C_f) \right)^2 \right>_\FFd
+ \frac{4\mathfrak{g}}{K+1} \left<  S^{-}(K, C_f) - S^{+}(K, C_f) \right>_\FFd \right\}.
\end{eqnarray*}

Now Theorem \ref{boundforSpm} implies that
\begin{eqnarray*}
 \left\langle S^{-}(K, C_f) - S^{+}(K, C_f) \right\rangle_\FFd &=& \left< S^{-}(K, C_f)  \right>_\FFd -
\left<   S^{+}(K, C_f) \right>_\FFd =O(1).
\end{eqnarray*}
For the remaining term we note that
\begin{eqnarray*}
&&\left\langle \left(S^{-}(K, C_f) - S^{+}(K, C_f)\right)^2\right \rangle_\FFd
\\
&=&\left\langle \left(S^{-}(K, C_f)\right)^2\right\rangle_\FFd  + \left\langle \left(S^{+}(K, C_f)\right)^2\right\rangle_\FFd -2\left\langle  \sum_{j_1, j_2=1}^{p-1} S^{-}(K, f, \psi^{j_1}) S^{+}(K, f, \psi^{j_2})\right\rangle_\FFd.
\end{eqnarray*}
By Corollary \ref{cor:2ndmoment}, this equals
\[\frac{4(p-1)}{\pi^2}\log(\mathfrak{g}|\mathcal I|)+O(1)-\frac{4(p-1)}{\pi^2}\log(\mathfrak{g}|\mathcal I|)+O(1)=O(1).\]
Therefore,
\[\left< \left ( N_\mathcal{I}(C_f )- 2\mathfrak{g} |\mathcal{I}| +S^{\pm}(K, C_f) \right)^2 \right>=O\left(\left(\frac{2\mathfrak{g}}{K+1}\right)^2\right)\]
and
\[\left \langle \left( \frac{N_\mathcal{I}(C_f) - 2\mathfrak{g} |\mathcal{I}|+S^\pm(K, C_f)}{\sqrt{(2(p-1)/\pi^2) \log(\mathfrak{g}|\mathcal{I}|)} }\right)^2 \right \rangle \rightarrow 0\]
when $\mathfrak{g}$ tends to infinity and $K =\mathfrak{g}/\log \log(\mathfrak{g}|\mathcal{I}|)$.
\end{proof}

\section{Acknowledgments} The authors would like to thank Rachel Pries for many useful discussions
while preparing this paper and Ze\'ev Rudnick for constructive comments  that helped clarify the exposition of the results. A substantial part of this work was completed during a SQuaREs program at the American Institute for Mathematics (AIM), and the authors would like to thank AIM for
this opportunity. The first and third named authors thank the Centre de Recherche Math\'ematique (CRM) for its hospitality. The fourth author thanks the Graduate Center at CUNY for its hospitality.

This work was supported by 
  the Simons Foundation [\#244988 to A. B.], the UCSD Hellman Fellows Program
[2012-2013 Hellman Fellowship to A. B.], the National Science Foundation [DMS-1201446 to B. F.], PSC-CUNY [to B.F.], the Natural Sciences and Engineering Research Council
 of Canada [Discovery Grant 155635-2008 to C. D., 355412-2008 to M. L.] and the Fonds de recherche du Qu\'ebec - Nature et technologies [144987 to M. L., 166534 to C. D. and M. L.]

\end{document}